\newcommand\suchthat{%
 \@ifstar
  {\mathrel{}\middle|\mathrel{}}
  {\mid}%
}
\newcommand{\black}[1]{{\color{black}{#1}}}
\newtheorem{theorem}{Theorem} 
\newtheorem{lemma}{Lemma}
\newtheorem{definition}{Definition} 
\newtheorem{remark}{Remark}  
\newtheorem{claim}{Claim}  
\newtheorem{proposition}{Proposition}
\newtheorem*{gLLL}{General LLL}
\newtheorem*{lLLL}{Lopsided LLL}
\newtheorem*{atomicity}{Atomicity}
\newtheorem*{amenability}{Amenability}
\newtheorem*{cd}{Potential Causality Digraph}
\newtheorem*{causality}{Potential Causality}
\newcommand{\Indep}{\mathrm{Ind}}
\newcommand{\List}{\mathrm{List}}  
\newcommand{\Versioned}{\mathrm{Ver}}
\begin{document}

\title{Random Walks that Find Perfect Objects\\ and the Lov\'{a}sz Local Lemma}

\author{
Dimitris Achlioptas
\thanks{
Research partially performed at RACTI  and the Department of Informatics and Telecommunications, University of Athens. Research supported by ERC Starting Grant 210743 and an Alfred P. Sloan Research Fellowship.} 
\\ Deparment of Computer Science\\ University of California Santa Cruz
\\
\and 
Fotis Iliopoulos\thanks{ 
Research partially performed at the Department of Electrical and Computer Engineering, National Technical University of Athens. Research supported by ERC Starting Grant 210743.} 
 \\ Deparment of Electrical Engineering and Computer Science\\ University of California Berkeley\\{\small{\tt }}
}

\date{\empty}

\maketitle

\begin{abstract}
We give an algorithmic local lemma by establishing a sufficient condition for the uniform random walk on a directed graph to reach a sink quickly. Our work is inspired by Moser's entropic method proof of the Lov\'{a}sz Local Lemma (LLL) for satisfiability and completely bypasses the Probabilistic Method formulation of the LLL. In particular, our method works when the underlying state space is entirely unstructured. Similarly to Moser's argument, the key point is that the inevitability of reaching a sink is established by bounding the entropy of the walk as a function of time.
\end{abstract}
 
\section{Introduction}

Let $\Omega$ be a (large) set of objects and let $F$ be a collection of subsets of $\Omega$, each subset comprising objects sharing some (negative) feature. We will refer to each subset $f \in F$ as a \emph{flaw} and, following linguistic rather than mathematical convention, say that $f$ is present in $\sigma$ if $f \ni \sigma$. We will say that an object $\sigma \in \Omega$ is \emph{flawless} (perfect) if no flaw is present in $\sigma$. For example, given a CNF formula on $n$ variables with clauses $c_1,c_2,\ldots,c_m$, we can define a flaw for each clause $c_i$, comprising the subset of $\Omega = \{0,1\}^n$ violating $c_i$. 
 
Given $\Omega$ and $F$ we can often prove the \emph{existence} of flawless objects using the Probabilistic Method. Indeed, in many interesting cases this is the only way we know how to do so. To employ the Probabilistic Method we introduce a probability measure on $\Omega$ and consider the collection of (``bad") events $\mathcal{A}$ corresponding to the flaws (one event per flaw). The existence of flawless objects is then equivalent to the intersection of the complements of the bad events having strictly positive probability. Clearly, such positivity always holds if the events in $\mathcal{A}$ are independent and none of them has measure 1. One of the most powerful tools of the Probabilistic Method is the Lov\'{a}sz Local Lemma~(LLL) asserting that such positivity also holds under a condition of limited dependence among the events in $\mathcal{A}$. The idea of the Local Lemma was first circulated by Lov\'{a}sz in the early 1970s in an unpublished note. It was published by Erd\H{os} and Lov\'{a}sz in~\cite{lll}.  The general form below is also due in unpublished form to Lov\'{a}sz and was given by Spencer in~\cite{Spencer77}. 
\begin{gLLL}
Let $\mathcal{A} = \{A_1,A_2, \ldots, A_m \}$ be a set of events and let 
$
D(i)
\subseteq [m] \setminus \{i\}$ denote the set of indices of the dependency set of $A_i$, i.e., $A_i$ is mutually independent of all events in $\mathcal{A} \setminus \{A_i \cup \bigcup_{ j \in 
D(i)}
A_j \}$. If there exist positive real numbers 
$\{\mu_i\}$ such that for all $i \in [m]$,
\begin{equation}
\Pr(A_i) \prod_{j \in \{i\} \cup
D(i)
} (1+\mu_j)  \le \mu_i \enspace , \label{eq:lllcond}
\end{equation}
then the probability that none of the events in $\mathcal{A}$ occurs is  at least $\prod_{i=1}^m 1/(1+\mu_i) > 0$.
\end{gLLL}

\begin{remark}
Condition~\eqref{eq:lllcond} above is equivalent to $\Pr(A_i) \le x_i \prod_{j \in 
D(i)
} (1-x_j)$, where $x_i = \mu_i/(1+\mu_i)$. As we will see the formulation~\eqref{eq:lllcond} facilitates comparisons. 
\end{remark}

In~\cite{LopsTrav}, Erd\H{o}s and Spencer noted that one can replace the LLL's requirement that each bad event is dependent with few other bad events with the weaker requirement that each bad event is \emph{negatively correlated} with few other bad events. That is, for each bad event $A_i$ there should only be few other bad events whose non-occurrence may boost $A_i$'s probability of occurring; the non-occurrence of any subset of the remaining events should leave $A_i$ either unaffected, or make it less likely. A natural setting for the lopsided LLL arises when one seeks a collection of permutations satisfying a set of constraints and considers the uniform measure on them. While the bad events (constraint violations) are now typically densely dependent (as fixing the image of even just one element affects all others), one can often establish sufficient negative correlation among the bad events to apply the lopsided LLL. 

\begin{lLLL}[\cite{LopsTrav}]
Let $\mathcal{A} = \{A_1, A_2,\ldots,A_m\}$ be a set of $m$ events. For each $i \in [m]$, let $\Gamma(i)$ be a subset of $[m] \setminus \{i\}$ such that $\Pr(A_i \suchthat \cap_{j \in S} \overline{A_j}) \le \Pr(A_i)$, for every $S \subseteq [m] \setminus (\Gamma(i) \cup \{i\})$. If there exist positive real numbers $\{\mu_i\}$ such that for all $i \in [m]$,
\begin{equation}\label{eq:LLLL}
\Pr(A_i) \prod_{ j \in \{  i \}  \cup  \Gamma(i) } (1+\mu_j)  \le \mu_i \enspace , 
\end{equation}
then the probability that none of the events in $\mathcal{A}$ occurs is at least $\prod_{i=1}^m 1/(1+\mu_i) > 0$. 
\end{lLLL}

In the context of the general LLL it is natural to define the \emph{dependence} digraph $\mathcal{D}$ of a collection of events $\{A_1,A_2, \ldots, A_m \}$ as having a vertex $v_i$ for each event $A_i$ and an arc $(v_i,v_j)$ iff $j \in D(i)$, noting that there exist systems of events such that $\mathcal{D}$ contains arc $(i,j)$ but not arc $(j,i)$. The \emph{lopsided dependence} digraph is the sparsification $\mathcal{D}_L$ of $\mathcal{D}$ wherein each event $A_i$ points only to the events that may boost it, i.e., the elements of the set $\Gamma(i) \subseteq D(i)$. Let $G$ be the undirected graph that results by ignoring arc direction in $\mathcal{D}_L$. Also, observe that condition~\eqref{eq:LLLL} can be trivially rewritten (expanded) as 
\begin{align}
\Pr(A_i) \sum_{S \subseteq \{  i \} \cup  \Gamma(i) }  \prod_{j \in S} \mu_j \le \mu_i \enspace . \label{eq:unpackster}
\end{align}
Relatively recently, Bissacot et al.~\cite{bissacot2011improvement} improved the lopsided LLL when the graph $G$ is not triangle-free. Specifically, they showed that the conclusion of the lopsided LLL remains valid if the summation in~\eqref{eq:unpackster} is restricted to those sets $S \subseteq \{  i \} \cup  \Gamma(i)$ which are \emph{independent} in $G$.

\subsection{Constructive Versions} 

As one can imagine, after proving that $\Omega$ contains flawless objects via the LLL it is natural to ask if some flawless object can be found efficiently. Making the LLL constructive has been a long quest, starting with the work of Beck~\cite{beck_lll}, with subsequent works of Alon~\cite{alon_lll}, Molloy and Reed~\cite{mike_stoc}, Czumaj and Scheideler~\cite{Czumaj_lll}, Srinivasan~\cite{aravind_08} and others. Each such work established a method for finding flawless objects efficiently, but in all cases under significant additional conditions relative to the LLL. The breakthrough was made by Moser~\cite{moser} who showed that a shockingly simple algorithm nearly matches the LLL condition for \mbox{$k$-CNF} formulas. Very shortly afterwards, Moser and Tardos in a landmark paper~\cite{MT} made the general LLL constructive for all \emph{product} measures over explicitly presented variables. 

Specifically, in the so-called \emph{variable setting} of~\cite{MT}, each event $A_i$ is associated with the set of variables $\mathrm{vbl}(A_i)$ that determine it so that $j \in D(i)$ iff $\mathrm{vbl}(A_i) \cap \mathrm{vbl}(A_j) \ne \emptyset$. Moser and Tardos proved that if the condition~\eqref{eq:lllcond} of the general LLL holds, then repeatedly selecting \emph{any} occurring event $A_i$ (flaw present) and resampling every variable in $\mathrm{vbl}(A_i)$ independently of all others, leads to an elementary event where no event in $\mathcal{A}$ holds (flawless object) after a polynomial number of resamplings.  Pegden~\cite{PegdenIndepen} strengthened the result of Moser and Tardos~\cite{MT} by showing that its conclusion still holds if the condition~\eqref{eq:lllcond} of the general LLL is replaced by~\eqref{eq:unpackster}, where the summation is restricted to independent sets, i.e., under the condition of Bissacot et al.~\cite{bissacot2011improvement} mentioned above. Kolipaka and Szegedy in~\cite{szege_meet} showed that the algorithm of Moser and Tardos, in fact, converges in polynomial time under the criterion of Shearer~\cite{Shearer}, the most generous condition under which $\Pr[\cap_i \overline{A_i}]>0$ for symmetric dependency graphs. As the criterion of Shearer is not efficiently verifiable, Kolipaka, Szegedy and Xu~\cite{CLLL} gave a series of intermediate conditions, between the general LLL and Shearer's criterion, for the algorithm of~\cite{MT} to terminate, most notably the efficiently verifiable Clique LLL. On the other hand, with the notable exception of CNF-SAT,  none of these results applies to the lopsided LLL which remained non-constructive.

Very recently Harris and Srinivasan~\cite{SrinivasanPerm} made the lopsided LLL constructive for the uniform measure on Cartesian products of permutations. Among other results this yielded an efficient algorithm for constructing $n \times n$ Latin Squares when each color appears at most $\Delta \le (27/256)n$ times, matching the best non-constructive bound due to Bissacot et al.~\cite{bissacot2011improvement} (who improved the original  $\Delta \le n/(4\mathrm{e})$ bound of Erd\H{o}s and Spencer~\cite{LopsTrav} by exploiting the local density of the lopsided dependency graph). Harris and Srinivasan~\cite{SrinivasanPerm} pointed out that while the permutation setting is the most common use case, the lopsided LLL has been gainfully applied to other settings~\cite{lu2013quest,mohr2013applications} including hypergraph matchings, set partitions and spanning trees, and asked if their results can be extended beyond permutations. In particular, they left as a canonical open problem whether the results of Dudek, Frieze and Ruci\'{n}ski~\cite{Hamilton1} regarding Hamilton Cycles in edge colored hypergraphs can be made constructive.

\section{A New Framework}

Inspired by the breakthrough  of Moser~\cite{moser} we take a more direct approach to finding flawless objects, bypassing the probabilistic formulation of the existence question. Specifically, we replace the measure on $\Omega$ by a directed graph $D$ on $\Omega$ and we seek flawless objects by taking random walks on $D$. With this in mind, we refer to the elements of $\Omega$ as states. As in Moser's work~\cite{moser}, each state transformation (step of the walk) $\sigma \rightarrow \tau$  will be taken to \emph{address} a flaw present at $\sigma$. Naturally, a step may eradicate other flaws beyond the one addressed but may also introduce new flaws (and, in fact, \mbox{may fail to eradicate the addressed flaw).} By replacing the measure with a directed graph we achieve two main effects:
\begin{itemize}
\item
Both the set of objects $\Omega$ and every flaw $f \subseteq \Omega$ can be entirely \emph{amorphous}. That is, $\Omega$ does not need to have product form $\Omega = D_1 \times \cdots \times D_n$, as in the work of Moser and Tardos~\cite{MT}, or any form of symmetry, as in the work of Harris and Srinivasan~\cite{SrinivasanPerm}. For example, $\Omega$ can be the set of all Hamiltonian cycles of a graph, a set of very high complexity.
\item
The set of transformations for addressing a flaw $f$ can differ \emph{arbitrarily} among the different states $\sigma \in f$, allowing the actions to adapt to the ``environment". This is in sharp contrast with all past algorithmic versions of the LLL where either no or very minimal adaptivity was possible. As we discuss in Section~\ref{sec:disc}, this moves the Local Lemma from the Probabilistic Method squarely within the purview of Algorithm Design.
\end{itemize}

Concretely, for  each $\sigma \in \Omega$, let $U(\sigma) = \{f \in F: \sigma \in f\}$, i.e., $U(\sigma)$ is the set of flaws present in $\sigma$. For each $\sigma \in \Omega$ and $f \in U(\sigma)$ we require a set $A(f,\sigma) \subseteq \Omega$ that must contain at least one element other than $\sigma$ which we refer to as the set of possible \emph{actions} for addressing flaw $f$ in state $\sigma$. To address flaw $f$ in state $\sigma$ we select uniformly at random an element $\tau \in A(f,\sigma)$ and walk to state $\tau$, noting that possibly  $\tau = \sigma \in A(f,\sigma)$. Our main point of departure is that now the set of actions for addressing a flaw $f$ in each state $\sigma$ can depend \emph{arbitrarily} on the state, $\sigma$, itself. 

We represent the set of all possible state transformations as a multi-digraph $D$ on $\Omega$ formed as follows: for each state $\sigma$, for each flaw $f \in U(\sigma)$, for each state  $\tau \in A(f,\sigma)$ place an arc $\sigma \xrightarrow{f} \tau$ in $D$, i.e., an arc labeled by the flaw being addressed. Thus, $D$ may contain pairs of states $\sigma, \tau$ with multiple $\sigma \to \tau$ arcs, each such arc labeled by a different flaw, each such flaw $f$ having the property that moving to $\tau$ is one of the actions for addressing $f$ at $\sigma$, i.e., $\tau \in A(f,\sigma)$. Since we require that the set $A(f,\sigma)$ contains at least one element other than $\sigma$ for every flaw in $U(\sigma)$ we see that a vertex of $D$ is a sink iff it is flawless.

We focus on digraphs satisfying the following condition.
\begin{atomicity}
$D$ is \emph{atomic} if for every flaw $f$ and state $\tau$ there is \emph{at most} one arc incoming to $\tau$ labeled by $f$.
\end{atomicity}

The purpose of atomicity is to capture ``accountability of action". In particular, note that if $D$ is atomic, then every walk on $D$ can be reconstructed from its final state and the sequence of labels on the arcs traversed, as atomicity allows one to trace the walk backwards unambiguously. To our pleasant surprise, in all applications we have considered so far we have found atomicity to be ``a feature not a bug", serving as a very valuable \emph{aid} in the design of flaws and actions, i.e., of algorithms. A fruitful way to think about atomicity is to consider the case where $\Omega$ and $F$ have product structure over a set of variables, e.g., a Constraint Satisfaction Problem. In that case the following suffice to imply atomicity:
\begin{enumerate}
\item\label{satlike}
Each constraint (flaw) forbids exactly \emph{one} joint value assignment to its underlying variables. 
\item\label{nospooky}
Each state transition modifies \emph{only} the variables of the violated constraint (flaw) that it addresses. 
\end{enumerate}

Condition~\ref{satlike} expresses a purely syntactic requirement: compound constraints must be broken down to constituent parts akin of satisfiability constraints. So, for example, to encode graph \mbox{$q$-colorability} we must write $q$ constraints (flaws) per edge, one for each color. Decomposing constraints in this manner enables a uniform treatment at no additional cost. In many cases it is, in fact, strictly advantageous as it affords a more refined accounting of conflict  between constraints.  Condition~\ref{nospooky} on the other hand is a genuine restriction reflecting the idea of ``focusing" introduced by Papadimitriou~\cite{papafocus}, i.e., that every state transformation should be the result of attempting to eradicate some specific flaw. 

To see that Conditions~\ref{satlike} and \ref{nospooky} imply atomicity imagine that there exist arcs $\sigma_1 \xrightarrow{f} \tau$ and $\sigma_2 \xrightarrow{f} \tau$, i.e., two state transformations addressing the same flaw $f$ leading to the same state $\tau$. Since $f$ must be present in both $\sigma_1$ and $\sigma_2$, Condition~\ref{satlike} implies that if $\sigma_1 \neq \sigma_2$, then there exists at least one variable $v$ not bound by $f$ which takes different values in $\sigma_1, \sigma_2$. In that case, though, Condition~\ref{nospooky} implies that $v$ will have the same value before and after each of the two transformations, leading to a contradiction.

Having defined the multi-digraph $D$ on $\Omega$ we will now define a digraph $C$ on the set of flaws $F$, reflecting some of the structure of $D$.

\begin{causality}
For each arc $\sigma \xrightarrow{f}  \tau$ in $D$ and each flaw $g$ present in $\tau$ we say that $f$ causes $g$ if $g = f$ or $g \not\ni \sigma$. If $D$ contains \emph{any} arc in which $f$ causes $g$ we say that $f$ \emph{potentially causes} $g$.
\end{causality}

\begin{cd}
The digraph $C=C(\Omega,F,D)$ of the potential causality relation, i.e., the digraph on $F$ where $f \rightarrow g$ iff $f$ potentially causes $g$, is called the potential causality digraph. The \emph{neighborhood} of a flaw $f$  is $\Gamma(f) =\{g : f \to g \text{  exists in $C$}\}$.
\end{cd}

In the interest of brevity we will call $C$ the causality digraph, instead of the potential causality digraph. It is important to note that $C$ contains an arc $f \to g$ if there exists \emph{even one} state transition aimed at addressing $f$ that causes $g$ to appear in the new state. In that sense, $C$ is a ``pessimistic" estimator of causality (or, alternatively, a lossy compression of $D$). This pessimism is both the strength and the weakness of our approach.
On one hand, it makes it possible to extract results about algorithmic progress without tracking the evolution of state. On the other hand, it only gives good results when $C$ can remain sparse even in the presence of such stringent arc inclusion. We feel that this tension is meaningful: maintaining the sparsity of $C$ requires that the actions for addressing each flaw across different states are \emph{coherent} with respect to the flaws they cause.

Without loss of generality (and to avoid certain trivialities), we can assume that $C$ is strongly connected, implying $\Gamma(f) \ge 1$ for every $f \in F$. 
To see this, let $C_1, \ldots, C_k$ be the strongly connected components of $C$ and consider the DAG with vertices $c_1, \ldots, c_k$, where for $i \neq j$, $c_i$ points to $c_j$ iff there exist  $f \in C_i$ and $g \in C_j$ such that $f \rightarrow g$ exists in $C$. If we have a sufficient condition for finding flawless objects when the causality digraph is strongly connected, then we can take any source vertex $c_i$ in the DAG and repeatedly address flaws in $C_i$ until we reach a state  $\sigma_i \in \Omega$ that is $C_i$-flawless, at which point we remove $c_i$ from the DAG. If $\sigma_i$ has other flaws, we select a new source vertex $c_j$ and repeat the same idea continuing from $\sigma_i$. The actions that will be taken to address flaws in $C_j$ will never introduce flaws in $C_i$ etc.

So far we have not discussed \emph{which} flaw to address in each flawed state, demanding instead a non-empty set of actions $A(f,\sigma)$ for each flaw $f$ present in a state $\sigma$. We discuss the reason for this in Section~\ref{sec:choice}. For now, suffice it to say that we consider algorithms which employ an \emph{arbitrary} ordering $\pi$ of $F$ and in each flawed state $\sigma$ address the greatest flaw according to $\pi$ in a subset of $U(\sigma)$. 
\begin{definition}
If $\pi$ is any ordering of $F$, let $I_{\pi}:2^{F} \rightarrow F$ be the function mapping each subset of $F$ to its greatest element according to $\pi$, with $I_{\pi}(\emptyset) = \emptyset$. We will sometimes abuse notation and for a state $\sigma \in \Omega$, write $I_{\pi}(\sigma)$ for $I_{\pi}(U(\sigma))$ and also write $I$ for $I_{\pi}$ when $\pi$ is clear from context. 
\end{definition}
\begin{definition}
Let $D_{\pi} \subseteq D$ be the result of retaining for each state $\sigma$ only the outgoing arcs with label $I_{\pi}(\sigma)$. 
\end{definition}

The next definition reflects that since actions are selected uniformly, the \emph{number} of actions available to address a flaw, i.e., the breadth of the ``repertoire", is important.
\begin{amenability}
The \emph{amenability} of a flaw $f$ is 
\begin{equation}\label{eq:repertoire}
A_f = \min_{\sigma \in f} |A(f,\sigma)| \enspace .
\end{equation}
\end{amenability}
The amenability of a flaw $f$ will be used to bound from below the amount of randomness consumed every time $f$ is addressed. (The minimum in~\eqref{eq:repertoire} is often inoperative with $|A(f,\sigma)|$ being the same for all $\sigma \in f$.)

\section{Statement of Results}

Our first result concerns the simplest case where, after choosing a single fixed permutation $\pi$ of the flaws, in each flawed state $\sigma$ the algorithm simply addresses the greatest flaw present in $\sigma$ according to $\pi$, i.e., the algorithm is the uniform random walk on $D_{\pi}$. 

\begin{theorem}\label{cor:simple}
If for every flaw $f \in F$,
\[
\sum_{g \in \Gamma(f)} \frac{1}{A_g} < \frac{1}{\mathrm{e}}  \label{touti_dw_xamw} \enspace ,
\]
then for any ordering $\pi$ of $F$ and any $\sigma_1 \in \Omega$, the uniform random walk on $D_{\pi}$ starting at $\sigma_1$ reaches a sink within $(\log_2 | \Omega | +    |U(\sigma_1)|+s)/\delta$ steps with probability at least $1-2^{-s}$, where $\delta  =  1 - \max_{f \in F} \sum_{g \in \Gamma(f) } \frac{ \mathrm{e} }{A_g}$.
\end{theorem}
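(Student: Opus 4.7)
The plan is an entropy compression argument in the spirit of Moser. My first step is to use atomicity to set up an injective encoding of walks of length $t$ by pairs (final state, witness). Specifically, since every state has at most one incoming arc labeled by any fixed flaw, a walk's sequence of arc labels $W_1,\ldots,W_t$ together with its endpoint $\sigma_{t+1}$ determines the walk uniquely: we can trace it backward one arc at a time. Because the algorithm at each flawed state $\sigma$ deterministically selects $I_\pi(\sigma)$ to address, the only randomness in step $i$ comes from the uniform choice of $\tau\in A(W_i,\sigma_i)$, so the probability the walk follows any particular $t$-step trajectory equals $\prod_{i=1}^t 1/|A(W_i,\sigma_i)| \le \prod_{i=1}^t 1/A_{W_i}$.

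Summing these trajectory probabilities over all (endpoint, witness) pairs corresponding to valid walks that start at $\sigma_1$, I obtain
\[
\Pr[T\ge t] \;\le\; |\Omega|\cdot N_t,
\qquad
N_t \,:=\, \sum_{W_1,\ldots,W_t\text{ valid}}\;\prod_{i=1}^t \frac{1}{A_{W_i}},
\]
so that the $|\Omega|$ factor absorbs the sum over endpoints and eventually contributes the $\log_2|\Omega|$ term to the time bound.

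The combinatorial heart of the proof is bounding $N_t$. I would exploit the following structural fact: since $W_{i+1}=I_\pi(\sigma_{i+1})$ and $U(\sigma_{i+1})\subseteq U(\sigma_i)\cup\Gamma(W_i)$, at each step either (i) $W_{i+1}\in\Gamma(W_i)$, i.e., $W_{i+1}$ is \emph{caused} by $W_i$, or (ii) $W_{i+1}$ was already present in $\sigma_i$ (hence by $\pi$-maximality of $W_i$ strictly below $W_i$ in $\pi$). This lets me encode each witness as a forest whose roots lie in $U(\sigma_1)$, whose internal parent$\to$child edges point along arcs of the causality digraph $C$, and whose $t$ vertices are in bijection with $W_1,\ldots,W_t$. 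A generating-function enumeration of such forests together with the hypothesis $\sum_{g\in\Gamma(f)} 1/A_g < 1/e$ should give
\[
N_t \;\le\; 2^{|U(\sigma_1)|}\cdot 2^{-\delta t},
\]
where the constant $\mathrm{e}$ appearing in $\delta$ emerges from a Stirling-type bound $k!\ge (k/\mathrm{e})^k$ used to absorb the unlabeled-tree count when we pass from unordered forests to labeled structures. Assembling the two bounds gives $\Pr[T\ge t]\le 2^{\log_2|\Omega|+|U(\sigma_1)|-\delta t}$, so choosing $t=(\log_2|\Omega|+|U(\sigma_1)|+s)/\delta$ yields the claimed $\Pr[T\ge t]\le 2^{-s}$.

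The main obstacle is the combinatorial bound on $N_t$. Case (ii) above (the ``backtracking'' case, in which $W_{i+1}$ was already present but was not caused by $W_i$) demands careful bookkeeping: every such flaw must be traced back either to a root in $U(\sigma_1)$ or to a genuine causal ancestor in $C$, without double-counting, which is where the $2^{|U(\sigma_1)|}$ factor is born. The second delicate point is calibrating the enumeration so that the precise hypothesis $\sum_{g\in\Gamma(f)}1/A_g<1/\mathrm{e}$ is exactly what is needed for the weighted sum over forests to converge geometrically at rate $2^{-\delta}$; the factor $\mathrm{e}$ in the definition of $\delta$ is a sharp footprint of the Cayley/Stirling step in the tree enumeration.
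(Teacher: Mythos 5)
Your architecture is the same as the paper's: atomicity gives backward reconstruction of a trajectory from its endpoint and its label sequence (the paper's Claim~1), each step has probability at most $1/A_{W_i}$, and bad witnesses are encoded as forests whose roots come from $U(\sigma_1)$ and whose parent-to-child edges follow the causality digraph $C$ (the paper's Break Forests), after which one sums weights over forests. The problem is that both load-bearing steps are left unproved. First, the per-step observation $U(\sigma_{i+1})\subseteq U(\sigma_i)\cup\Gamma(W_i)$ does not by itself yield an injective, correctly-structured encoding: you must specify how each addressed flaw is attached to a unique parent and show that the witness is recoverable from the \emph{unordered} forest, while also guaranteeing that the children of a vertex labeled $f$ carry \emph{distinct} labels forming a subset of $\Gamma(f)$ and the roots a subset of $U(\sigma_1)$. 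The paper does this with the break-sequence bookkeeping (the sets $B_i$, pruning the collaterally eradicated flaws $O_i$ and the never-addressed flaws $N_i$, and recovering $w_i=I_{\pi}(E_i)$ via the recursion $E_{i+1}=(E_i-w_i)\cup B_i^*$); without an argument of this kind, bounding $N_t$ by a forest sum is not justified, and the ``without double-counting'' issue you mention is precisely the unhandled part.

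Second, the inequality $N_t\le 2^{|U(\sigma_1)|}2^{-\delta t}$ with $\delta=1-\mathrm{e}\max_f\sum_{g\in\Gamma(f)}1/A_g$ is exactly the quantitative heart of the theorem, and you only assert it (``should give''); moreover, the mechanism you name for the constant $\mathrm{e}$ (a Stirling/Cayley factor absorbing the unordered-to-labeled tree count) is not how any workable version of the enumeration produces it. What is actually needed is a choice of per-flaw weights and a stationarity/fixed-point verification: e.g., with $\alpha=\max_f\sum_{g\in\Gamma(f)}1/A_g$ and $\mu_f=1/(\alpha A_f)$ one gets the per-vertex factor $\frac{1}{\mu_f A_f}\prod_{g\in\Gamma(f)}(1+\mu_g)\le\alpha\,\mathrm{e}^{\alpha_f/\alpha}\le\mathrm{e}\alpha=1-\delta$, and one also needs $\max_f(\mu_fA_f)/\min_f A_f\le 1$ (true because $C$ is strongly connected) to keep the roots factor down to $2^{|U(\sigma_1)|}$. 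The paper routes this through its Theorem~2: a branching-process (Pegden-style) weighting over witness forests, with ``versioned'' flaws to uniformize step probabilities, and then derives Theorem~1 by setting $\mu_f=Z/(dA_f)$ and bounding $\sum_{S\subseteq\Gamma(f)}\prod_{g\in S}\frac{Z}{dA_g}\le(1+1/d)^d<\mathrm{e}$. Until you carry out such a weighted enumeration (or an equivalent), the proposal is a correct plan with the decisive lemma missing.
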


Theorem~\ref{cor:simple} has three features worth discussing, shared by all our further results below.\medskip

\noindent {\bf Arbitrary initial state.} The fact that $\sigma_1$ can be arbitrary means that any foothold on $\Omega$ suffices to apply the theorem, without needing to be able to sample from $\Omega$ according to some measure. While sampling from $\Omega$ has generally not been an issue in existing applications of the LLL, as we discuss in Section~\ref{sec:disc}, this has only been true precisely because the sets and the measures considered have been highly structured.\smallskip

\noindent {\bf Arbitrary number of flaws.} The running time depends only on the number of flaws present in the initial state, $|U(\sigma_1)|$, not on the total number of flaws $|F|$. This has an implication  analogous to the result of Hauepler, Saha, and Srinivasan~\cite{haeupler_lll} on core events: even when $|F|$ is very large, e.g., super-polynomial in the problem's encoding length, we can still get an efficient algorithm if we can show that $|U(\sigma_1)|$ is small, e.g., by proving that in every state only polynomially many flaws may be present. This feature provides great flexibility in the design of flaws, as demonstrated in one of our applications, presented in Section~\ref{sec:cb}.\smallskip

\noindent {\bf Cutoff phenomenon.}
The bound on the running-time is sharper than a typical high probability bound, being instead akin to a mixing time cutoff bound~\cite{cutoff_Diaconis}, wherein the distance to the stationary distribution drops from near 1 to near 0 in a very small number of steps past a critical point. In our setting, the walk first makes $T_0/\delta$ steps without any guarantee of progress, but from that point on every single step has constant probability of being the last step. While, pragmatically, a high probability bound would be just as useful, the fact that our bound naturally takes this form suggests a potential deeper connection with the theory of Markov chains.\medskip

Theorem~\ref{cor:simple} follows from the following significantly more general result. We present the derivation of Theorem~\ref{cor:simple} from Theorem~\ref{asymmetric} in Section~\ref{sec:BPstate}. Observe the similarity between the condition of Theorem~\ref{asymmetric} and the condition~\eqref{eq:lllcond} of the general LLL with $1/A_f$ replacing $\Pr(A_i)$.
\begin{theorem}[Main result]\label{asymmetric}
If there exist positive real numbers $\{\mu_f\}$ such that for every flaw $f \in F$,
\begin{align}
\frac{1}{A_f}
\prod_{g \in \Gamma(f)} (1+\mu_g) < \mu_f \enspace , \label{eq:mc2}
\end{align}
then for any ordering $\pi$ of $F$ and any $\sigma_1 \in \Omega$, the uniform random walk on $D_{\pi}$ starting from $\sigma_1$ reaches a sink within $(T_0+s)/\delta$ steps with probability at least $1-2^{-s}$, where 
\begin{align*}
\delta & = 1 - \max_{f \in F} \left(\frac{1}{\mu_f A_f} \prod_{g \in \Gamma(f)} (1+\mu_g)\right) > 0 \enspace ,\\
T_0 	& = 
\log_2 | \Omega | +    |U(\sigma_1) |\cdot \log_2 \left(1 + \frac{ \max_{f \in F}(\mu_f A_f)}{  \min_{f \in F} A_f} \right) \enspace  .
\end{align*}
\end{theorem}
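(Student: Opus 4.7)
Fix a starting state $\sigma_1$ and let $p_t$ denote the probability that the uniform random walk on $D_\pi$ has not reached a sink after $t$ steps. The plan is to bound $p_t$ by an entropy-compression argument exploiting atomicity, and then choose $t = (T_0+s)/\delta$ so that $p_t \le 2^{-s}$. First I would establish a trajectory-level bound: given any length-$t$ trajectory $\sigma_1 \to \sigma_2 \to \cdots \to \sigma_{t+1}$ produced by the algorithm in which no intermediate $\sigma_i$ is a sink, set $f_i = I_\pi(\sigma_i)$. The probability that the walk follows this specific trajectory is at most $\prod_{i=1}^t A_{f_i}^{-1}$, since each action is sampled uniformly from a set of size at least $A_{f_i}$. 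By atomicity, the trajectory is uniquely reconstructible from the pair $(\sigma_{t+1}, (f_1,\ldots,f_t))$ by tracing the walk backward one arc at a time. Summing over possible endpoints,
\[
p_t \;\le\; |\Omega| \sum_{W \in \mathcal{W}_t(\sigma_1)} \prod_{i=1}^t \frac{1}{A_{f_i}},
\]
where $\mathcal{W}_t(\sigma_1)$ is the set of \emph{valid} flaw sequences $W=(f_1,\ldots,f_t)$: those with $f_1 = I_\pi(\sigma_1)$ such that, for every $i$, either $f_{i+1} \le_\pi f_i$ (the next flaw was already present in $\sigma_i$) or $f_{i+1} \in \Gamma(f_i) \cup \{f_i\}$ (it was caused by addressing $f_i$).

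The combinatorial heart of the argument is to bound the weighted count $S_t := \sum_{W \in \mathcal{W}_t(\sigma_1)} \prod_i A_{f_i}^{-1}$ using the asymmetric LLL-style condition~\eqref{eq:mc2}. I would introduce a $\mu$-weighted potential. Concretely, let $q_f^{(t)}$ be the $A$-weighted sum over valid length-$t$ sequences ending in $f$; decomposing on the final step expresses $q_f^{(t)}$ as $A_f^{-1}$ times a sum of $q_g^{(t-1)}$ over flaws $g$ from which the transition $g \to f$ is valid (either $f \in \Gamma(g)\cup\{g\}$ or $f \le_\pi g$). Normalizing by $\mu_f$ and summing across $f$ makes the quantity $\frac{1}{A_f \mu_f}\prod_{g \in \Gamma(f)}(1+\mu_g)$ emerge, which by~\eqref{eq:mc2} is at most $1-\delta$; iterating $t$ times yields geometric decay $(1-\delta)^t$. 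The base of the recursion accounts for $U(\sigma_1)$ and produces a factor $\prod_{f \in U(\sigma_1)}(1+\mu_f)$, which is at most $\bigl(1 + \max_g \mu_g A_g / \min_g A_g\bigr)^{|U(\sigma_1)|}$ (using $A_f \ge \min_g A_g$). Altogether, $S_t \le \bigl(1 + \max_g \mu_g A_g / \min_g A_g\bigr)^{|U(\sigma_1)|}(1-\delta)^t$.

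Combining the two steps yields $p_t \le |\Omega|\cdot\bigl(1 + \max_g \mu_g A_g / \min_g A_g\bigr)^{|U(\sigma_1)|}(1-\delta)^t = 2^{T_0 - t\log_2\frac{1}{1-\delta}}$. Since $\log_2\frac{1}{1-\delta} \ge \delta$ (from $-\ln(1-\delta) \ge \delta$ together with $\ln 2 < 1$), the choice $t = (T_0+s)/\delta$ gives $p_t \le 2^{-s}$, as required. The chief obstacle lies in the middle step: valid sequences weave freely between causality-following ``ascents'' (where $f_{i+1} \in \Gamma(f_i)\cup\{f_i\}$) and ``descents'' to arbitrary flaws of lower $\pi$-rank that were already present, and one needs a bookkeeping scheme that absorbs both transition types while letting condition~\eqref{eq:mc2} apply cleanly at every step. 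A Moser--Tardos-style witness-tree argument handles the causality part, but here the ordering $\pi$ effectively collapses the tree into a thread-like structure in which the descents must be controlled by the $\mu$-weighted potential; this is ultimately the source of the $|U(\sigma_1)|$-dependent term in $T_0$.
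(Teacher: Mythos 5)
Your first step is sound and matches the paper: atomicity makes a bad trajectory reconstructible from its endpoint plus its flaw sequence, the probability of a trajectory is at most $\prod_i A_{f_i}^{-1}$, and hence $p_t \le |\Omega| \sum_W \prod_i A_{f_i}^{-1}$ over realizable witnesses. The gap is in the middle step, and it is exactly the point you flag as ``the chief obstacle'' without resolving it. Your relaxation of realizable witnesses to sequences satisfying, at each step, ``$f_{i+1} \le_\pi f_i$ or $f_{i+1} \in \Gamma(f_i)\cup\{f_i\}$'' is too lossy: the descent transitions allow $f_{i+1}$ to be \emph{any} flaw below $f_i$ in the order $\pi$, and such flaws bear no relation to $\Gamma(f_i)$, so the backward recursion you propose for $q_f^{(t)}$ sums over all $g$ with $f \le_\pi g$ in addition to the causality predecessors. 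Condition~\eqref{eq:mc2} controls only sums over $\Gamma(f)$; it says nothing about $\sum_{g \le_\pi f} A_g^{-1}$, so the claimed contraction factor $\frac{1}{\mu_f A_f}\prod_{g\in\Gamma(f)}(1+\mu_g) \le 1-\delta$ does not ``emerge'' from this recursion, and the weighted count of your relaxed sequence set can be far larger than $(1-\delta)^t$ times the stated boundary term.

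The missing idea is a charging scheme that makes descents free: every flaw addressed during the walk is either present in $\sigma_1$ or was \emph{introduced} (caused) by some earlier addressed flaw, and each introduction can be attributed uniquely to the step that produced it. The paper implements this via the Break Sequence $B_0^*,B_1^*,\ldots,B_{t-1}^*$, where $B_i^*$ records the newly introduced flaws that are later addressed; the sets $E_i$ defined by $E_{i+1}=(E_i - w_i)\cup B_i^*$ together with the rule $w_i = I_\pi(E_i)$ recover the witness, so it suffices to count Break Forests, in which each vertex labeled $f$ has its children labeled by a subset of $\Gamma(f)$ (distinct flaws) and the roots by a subset of $U(\sigma_1)$. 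The weighted count then factorizes vertex by vertex, giving exactly $\frac{1}{\mu_f A_f}\sum_{S\subseteq\Gamma(f)}\prod_{g\in S}\mu_g = \frac{1}{\mu_f A_f}\prod_{g\in\Gamma(f)}(1+\mu_g)$ per vertex (the paper bounds this via a Pegden-style branching process over versioned flaws), while the roots contribute the $|U(\sigma_1)|\cdot\log_2\bigl(1+\max_f(\mu_f A_f)/\min_f A_f\bigr)$ term in $T_0$. Without this forest (or an equivalent bookkeeping that charges each ``descent'' flaw to the $\Gamma$-list of the flaw that introduced it, or to $U(\sigma_1)$), your potential argument cannot apply~\eqref{eq:mc2} cleanly at every step, so as written the proof does not go through.
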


\begin{remark}
In applications, typically, $\delta = \Theta(1)$ and $T_0 = O(\log |\Omega| + |U(\sigma_1)| \log|F|)$. 
\end{remark}

\subsection{Dense Neighborhoods}

In a number of applications the subgraph induced by the neighborhood of each flaw in the causality graph contains several arcs. We improve Theorem~\ref{asymmetric} in such settings by employing a \emph{recursive} algorithm. This has  the effect that the flaw addressed in each step depends on the entire trajectory up that point not just the current state, i.e., the walk in non-Markovian. It is for this reason that we required a non-empty set of actions for every flaw present in a state, and why the definition of the causality digraph does not involve flaw choice. Specifically, for any ordering $\pi$ of $F$ and any $\sigma_1 \in \Omega$ the recursive walk is the non-Markovian random walk on $\Omega$ that occurs by invoking procedure {\sc Eliminate} below. Observe that if in line~\ref{a_key_diff} we did not intersect $U(\sigma)$ with $\Gamma(f)$ the recursion would be trivialized and the walk would be the uniform random walk on $D_{\pi}$. This is because the first time any ``while" condition would be satisfied, causing the corresponding recursive call to return, would be when $U(\sigma) = \emptyset$.

\begin{algorithm}\caption*{{\bf Recursive Walk}}
\begin{algorithmic}[1]\label{Recursive}
\Procedure{Eliminate}{}
\State $\sigma \leftarrow \sigma_1$
\While {$U(\sigma) \neq \emptyset$}	
	\State {\sc Address} ($I_{\pi}(\sigma),\sigma$) 
\EndWhile	
\State	\Return $\sigma$ 
\EndProcedure{}{}
\Procedure{Address}{$f,\sigma$}
\State $\sigma \leftarrow$ A uniformly random element of $A(f,\sigma)$  
\While {$B = U(\sigma) \cap \Gamma(f) \neq \emptyset$}   \label{a_key_diff}	\Comment{Note $\,\cap \Gamma(f)$}
	\State {\sc{Address}}($I_{\pi}(B),\sigma$) 				\label{code:act}
\EndWhile  
\EndProcedure  
\end{algorithmic}
\end{algorithm}

\begin{definition}\label{defn:G}
Let $G(F,E)$ be the \emph{undirected} graph on $F$ where $\{f,g\} \in E$ iff both $f \rightarrow g$ and $g \rightarrow f$ exist in the causality digraph $C$. For any $S \subseteq F$, let $\Indep(S) = \{S' \subseteq S : \text{$S'$ is an independent set in $G$}\}$.
\end{definition}

Observe that, trivially, the condition of Theorem~\ref{asymmetric} can be restated as requiring that for every flaw $f \in F$,
\begin{align}
\frac{1}{\mu_f A_f} \sum_{S \subseteq \Gamma(f)}  \prod_{g \in S} \mu_g < 1 \enspace , \label{eq:unpack}
\end{align}
where, throughout, we use the convention that a product devoid of factors equals 1, i.e., $\prod_{x \in\emptyset} f(x) = 1$.

\begin{theorem}\label{olala}
If there exist positive real numbers $\{\mu_f\}$ such that for every flaw $f \in F$,
\begin{align}
\theta_f := \frac{1}{\mu_f A_f} \sum_{S \in \Indep(\Gamma(f))}  \prod_{g \in S} \mu_g < 1 \enspace , \label{eq:mc3}
\end{align}
then for any ordering $\pi$ of $F$ and any $\sigma_1 \in \Omega$, the recursive walk on $D$ starting at $\sigma_1$ reaches a sink within $(T_0+s)/\delta$ steps with probability at least $1-2^{-s}$, where $\delta = 1 - \max_{f \in F} \theta_f$, and
\[
T_0 		= 
\log_2 | \Omega | + \left(\black{\max_{ S \in \Indep(U(\sigma_1))} |S|} \right) \cdot \log_2 \left(1 + \frac{ \max_{f \in F}(\mu_f A_f)}{  \min_{f \in F} A_f} \right) \enspace  .
\]
\end{theorem}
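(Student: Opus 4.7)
The plan is to adapt the entropy-compression argument behind Theorem~\ref{asymmetric} to exploit the extra structure that the recursive procedure {\sc Eliminate} imposes on the walk. The spirit is still Moser's: show that the random bits consumed during the first $t$ steps can be re-encoded into the pair (final state, combinatorial record of the computation) whose total length grows too slowly with $t$; an imbalance between consumption and encoding then forces termination within $(T_0+s)/\delta$ steps except with probability $2^{-s}$.

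First, to any execution of {\sc Eliminate} I associate its \emph{recursion forest} $\Phi$: the roots are the successive top-level calls {\sc Address}$(I_{\pi}(\sigma),\sigma)$ made by the outer loop, and the ordered children of an internal node {\sc Address}$(f,\sigma)$ are the recursive calls made on line~\ref{code:act}. Each step of the walk is in bijection with a node of $\Phi$, which I label by the flaw being addressed. Atomicity of $D$ ensures that the trajectory is reconstructible from the final state $\sigma_t$ together with $\Phi$ with its labels listed in execution order: every resampling is inverted by locating the unique arc of $D$ labeled by the recorded flaw and landing on the recorded subsequent state.

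The crucial point, and where Theorem~\ref{olala} improves on Theorem~\ref{asymmetric}, is the structural lemma that the labels of the children of every internal node labeled $f$ form an element of $\Indep(\Gamma(f))$, and that the labels of the roots of $\Phi$ form an element of $\Indep(U(\sigma_1))$. Two facts feed this: by inspection of line~\ref{a_key_diff}, every child of a node labeled $f$ has its label in $\Gamma(f)$; and immediately after a call {\sc Address}$(g,\cdot)$ returns, the loop invariant $U(\sigma)\cap\Gamma(g)=\emptyset$ holds. Combining this invariant with the $\pi$-maximum selection rule and arguing carefully about which flaws can reappear in $U(\sigma)$ across later subtrees, I would rule out both arcs $g_i\to g_j$ and $g_j\to g_i$ simultaneously existing in $C$ between any two sibling labels $g_i,g_j$. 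The same argument at the outer loop gives the statement about roots.

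Once the structural lemma is established, the number of labeled recursion forests compatible with these restrictions is bounded by a recursive generating-function computation in which the local factor at a node labeled $f$ is exactly $\sum_{S\in\Indep(\Gamma(f))}\prod_{g\in S}\mu_g$; by~\eqref{eq:mc3} this factor is at most $\mu_f A_f\cdot\theta_f<\mu_f A_f$, producing a geometric ``gap'' of $\delta$ per step. Trading consumption ($\ge\log_2 A_f$ bits per step addressing $f$) against encoding (the $\log_2|\Omega|$ for $\sigma_t$, the root-encoding cost responsible for the $\max_{S\in\Indep(U(\sigma_1))}|S|\cdot\log_2(1+\max_f(\mu_f A_f)/\min_f A_f)$ term in $T_0$, and the per-node expansion factor above) then yields the claimed bound via the same Markov/geometric argument used for Theorem~\ref{asymmetric}, almost verbatim.

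I expect the structural lemma to be the main obstacle, specifically extending ``$g_{i+1}\notin\Gamma(g_i)$'' (which is immediate from the loop invariant) to pairwise $G$-independence of \emph{all} siblings. Non-consecutive siblings require an inductive argument that tracks how flaws may reappear across intervening subtrees, and this is exactly where the interplay between the $\pi$-ordering (``pick the $\pi$-greatest flaw present'') and the completion guarantee of each inner subtree must be used in tandem.
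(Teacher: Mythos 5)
Your overall architecture is the paper's: encode each bad trajectory of the Recursive Walk as a recursion forest (one tree per top-level call of {\sc Address} by {\sc Eliminate}, one child per recursive call), argue that root labels lie in $\Indep(U(\sigma_1))$ and sibling labels in $\Indep(\Gamma(f))$, and then rerun the counting machinery of Theorem~\ref{asymmetric} with the local sum restricted to independent sets. But the single genuinely new ingredient relative to Theorem~\ref{asymmetric} --- the structural independence lemma --- is exactly what you leave unproven, and the route you sketch for it is off target. The paper derives it from a clean ``return invariant'' (Proposition~\ref{prop:rid}): if {\sc Address}$(f,\sigma)$ returns at $\tau$, then $U(\tau)\subseteq U(\sigma)\setminus(\Gamma(f)\cup\{f\})$. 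The while-test in line~\ref{a_key_diff} immediately forbids flaws of $U(\sigma)\cap\Gamma(f)$ at return; the substantive part is that no flaw \emph{absent} from $U(\sigma)$ can be present at return: such a flaw $h$ must have been introduced by some invocation {\sc Address}$(g,\cdot)$ inside the subtree with $h\in\Gamma(g)$, and taking the \emph{last} such invocation, $h$ must be gone when it returns (its own while-test would otherwise block the return) and is never reintroduced afterwards. Consequently the $U$-sets seen by consecutive root invocations (and by consecutive sibling invocations under a node labelled $f$) form a decreasing sequence, so each later sibling's label was already present when the earlier sibling was invoked and hence lies outside $\Gamma(\text{earlier label})\cup\{\text{earlier label}\}$; pairwise independence in $G$ follows at once. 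Note that this argument makes \emph{no} use of the $\pi$-maximal selection rule, so your plan to extract independence of non-consecutive siblings from ``the interplay between the $\pi$-ordering and the completion guarantee'' is chasing the wrong mechanism; what you are missing is the ``no new flaw survives a completed subtree'' statement above, which is where the actual work lies.

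A secondary point you gloss over is where $\pi$ \emph{does} matter. You keep the forest ordered by execution and reconstruct via atomicity, which is fine for injectivity, but your generating-function step uses the set-indexed factor $\sum_{S\in\Indep(\Gamma(f))}\prod_{g\in S}\mu_g$; counting execution-ordered forests naively would instead produce sequence-indexed sums (an extra $|S|!$ per node). The fix is the paper's observation that successive roots and successive children are addressed in $\pi$-monotone order (again a consequence of the decreasing $U$-sets), so the execution order is determined by the label sets and one may count \emph{unordered} forests. With Proposition~\ref{prop:rid} and this remark in place, the rest of your outline (versions/entropy accounting, the per-node factor $\mu_f A_f\theta_f$, the $T_0$ term from the roots) does go through essentially as in the paper's Lemma~\ref{lem:master}.
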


\begin{remark}
Theorem~\ref{olala} improves Theorem~\ref{asymmetric} in that (i) the summation in~\eqref{eq:mc3} is only over the subsets of $\Gamma(f)$ that are independent in $G$, instead of being over all subsets of $\Gamma(f)$ as in~\eqref{eq:unpack}, and (ii) $T_0$ is proportional only to the size of the largest independent subset of $U(\sigma_1)$ rather than to the size of $U(\sigma_1)$.
\end{remark}

\begin{remark}
Theorem~\ref{olala} can be strengthened by introducing for each flaw $f \in F$ a permutation $\pi_f$ of $\Gamma_f$ and replacing $\pi$ with $\pi_f$  in line~\ref{code:act} the of Recursive Walk. With this change, in~\eqref{eq:mc3} it suffices to sum only over $S \subseteq \Gamma(f)$ satisfying the following: if the subgraph of $C$ induced by $S$ contains an arc $g \to h$, then $\pi_{f}(g) \ge \pi_f(h)$. As such a subgraph can not contain both $g \to h$ and $h \to g$ we see that $S \in \Indep(\Gamma(f))$.
\end{remark}

\subsection{A Left-Handed Algorithm}\label{sec:left_result}

While Theorems~\ref{cor:simple}--\ref{olala} do not care about the flaw ordering $\pi$, inspired by the so-called LeftHanded version of the LLL introduced by Pedgen~\cite{PegdenLLLL}, we give a condition under which the flaw order $\pi$ can be chosen in a \emph{provably} beneficial way. This is done by organizing the flaws in an order akin to an elimination sequence. Specifically, the idea is to seek a permutation $\pi$ and a  ``responsibility digraph" $R$, derived from the causality digraph $C$, so as to ``shift responsibility" from flaws failing to satisfy condition~\eqref{eq:mc2} of Theorem~\ref{asymmetric}, to flaws that have slack. 
\begin{definition}
For an ordered set of vertices $v_1 < v_2 < \cdots < v_n$, say that arc $v_i \rightarrow v_j$ is \emph{forward} if $i < j$ and \emph{backward} if $i > j$. Given a causality digraph $C=C(\Omega,F,D)$ and a permutation $\pi$ of $F$ ordering the vertices of $C$, we say that $R$ is a \emph{responsibility digraph for $C$ with respect to $\pi$} if:
\begin{enumerate}
\item 
Every forward arc and self-loop of $C$ exists in $R$.
\item 
If a backward arc $v_j \rightarrow v_i$ of $C$ does not exist in $R$, then for each $k$ such that $v_k \rightarrow v_j$ exists in $R$, $v_k \rightarrow v_i$  exists in $R$ as well.
\end{enumerate}
The neighborhood of a flaw $f$ in a responsibility graph $R$ is $\Gamma_R(f) =  \{ g \in F :  f \rightarrow g \text{ exists in } R  \}$. 
\end{definition}

For any permutation $\pi$ of $F$, any responsibility digraph $R$ with respect to $\pi$, and any $\sigma_1 \in \Omega$, the left-handed walk is the random walk induced on $\Omega$ by modifying the Recursive Walk as follows.

\begin{algorithm}[H]\caption*{{\bf LeftHanded Walk}}
\begin{algorithmic}
\State
\State 
In line~\ref{a_key_diff} of {\bf Recursive Walk} replace $\Gamma$ with $\Gamma_R$.
\end{algorithmic}
\end{algorithm}

\begin{theorem}\label{LeftHanded}
For any permutation $\pi$ of $F$ and any responsibility digraph $R$ with respect to $\pi$, if there exist positive real numbers $\{\mu_f\}$ such that for every flaw $f \in F$,
\[
\theta_f := \frac{1}{\mu_f A_f} \sum_{S \subseteq \Gamma_R(f)}  \prod_{g \in S} \mu_g < 1 \enspace , 
\]
then for any $\sigma_1 \in \Omega$, the lefthanded walk on $D$ starting at $\sigma_1$ reaches a sink within $(T_0+s)/\delta$ steps with probability at least $1-2^{-s}$, where $\delta = 1 - \max_{f \in F} \theta_f$, and 
\[
T_0 		= 
\log_2 | \Omega | +    |U(\sigma_1) |\cdot \log_2 \left(1 + \frac{ \max_{f \in F}(\mu_f A_f)}{  \min_{f \in F} A_f} \right) \enspace  .
\]
\end{theorem}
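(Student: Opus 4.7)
The plan is to adapt the witness-forest / entropy-compression strategy of Theorems~\ref{asymmetric} and \ref{olala} to the responsibility digraph $R$ and the permutation $\pi$. First, I would associate with every execution of the LeftHanded Walk a forest $\Phi$ of rooted, labeled trees: each root records a flaw addressed by the outer loop of {\sc Eliminate}, and the subtree rooted at a node labeled $f$ records the nested recursive invocations spawned inside the corresponding call to {\sc Address}$(f,\cdot)$. Because $D$ is atomic, every traversed arc is uniquely recoverable from its label together with the state it reaches, so the pair (final state, $\Phi$) determines the walk, and in particular its starting state $\sigma_1$. Consequently, the probability that an execution of length $t$ produces any prescribed $\Phi$ with $|\Phi|=t$ is at most $\prod_{v\in\Phi} 1/A_{\ell(v)}$, where $\ell(v)$ is the label at $v$.

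Next, I would extract the structural properties forced on $\Phi$ by the algorithm. Because line~\ref{a_key_diff} is modified to intersect with $\Gamma_R(f)$, every child of a node labeled $f$ is labeled by an element of $\Gamma_R(f)$. Because $I_\pi$ always returns the $\pi$-maximum, the children of each node, and likewise the roots of $\Phi$, are produced in strictly decreasing $\pi$-order; here I would use that, upon return from {\sc Address}$(h,\cdot)$, the flaw $h$ is no longer present, so $h$ cannot reappear among its siblings. In particular, the ordered child-sequence of any node labeled $f$ is in bijection with a \emph{subset} of $\Gamma_R(f)$, and the generating function for such sequences weighted by $\prod_g \mu_g$ equals $\sum_{S\subseteq \Gamma_R(f)}\prod_{g\in S}\mu_g$, which is exactly the quantity appearing in the theorem's hypothesis.

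The delicate step, and the one I expect to be the main obstacle, is verifying that condition~2 of the responsibility digraph truly supports the encoding, that is, that $(\sigma_1,\text{walk})\mapsto (\text{final state},\Phi)$ remains injective when we record children using $\Gamma_R$ rather than $\Gamma$. One must show that whenever the genuine causality chain would attach some $v_i$ as a descendant of $v_j$ via a backward arc $v_j\to v_i$ absent from $R$, the ancestor $v_k$ provided by property~2 (with both $v_k\to v_j$ and $v_k\to v_i$ in $R$) already records the appearance of $v_i$, so $v_i$ can be re-parented under the call on $v_k$ without losing information. This is Pegden's left-handed ``shifting of responsibility'' transplanted to the random-walk setting, and making it precise requires a careful induction on time along the recursion stack, together with the observation that the $\pi$-ordering rule ensures that $v_k$ is still active when $v_i$ would be introduced.

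Once the encoding is in place the rest is routine generating-function bookkeeping familiar from the proofs of Theorems~\ref{asymmetric} and \ref{olala}. Giving each node of label $f$ the weight $1/A_f$ and introducing $\mu_f$, the weighted sum over all labeled trees rooted at $f$ satisfies a fixed-point inequality whose right-hand side is exactly $\theta_f<1$; since the roots of $\Phi$ form a decreasing sequence drawn from $U(\sigma_1)$, summing over possible root-sequences contributes a factor bounded by $\bigl(1+\max_f \mu_f A_f/\min_f A_f\bigr)^{|U(\sigma_1)|}$, which is the source of the $|U(\sigma_1)|$ term in $T_0$. Multiplying by $|\Omega|$ to union-bound over final states and absorbing the geometric decay $\delta=1-\max_f\theta_f$ yields the claimed cutoff-style bound: after $T_0/\delta$ steps the walk reaches a sink within $s/\delta$ further steps with probability at least $1-2^{-s}$.
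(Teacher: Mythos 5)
Your overall architecture is the paper's: encode each execution of the LeftHanded Walk as the recursion forest (one tree per top-level call of {\sc Address}, one child per recursive call), use atomicity to recover the trajectory from the forest and the final state, observe that children of an $f$-node are labelled from $\Gamma_R(f)$, and finish with the weighted-tree/branching-process count that produces exactly $\theta_f$ and the stated $T_0$. The gap is at the crux, namely the claim that the roots and the children of each node carry \emph{distinct} labels and appear in decreasing $\pi$-order (without this the forest does not determine the witness sequence and the count over subsets of $\Gamma_R(f)$ is wrong). Your justification --- ``upon return from {\sc Address}$(h,\cdot)$ the flaw $h$ is no longer present, so $h$ cannot reappear among its siblings'' --- does not suffice, for two reasons. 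First, even the premise is nontrivial here: the while-test in line~\ref{a_key_diff} only checks $U(\sigma)\cap\Gamma_R(h)$, and $h\in\Gamma_R(h)$ holds only when $C$ has a self-loop at $h$; ruling out $h$ being re-introduced by its own descendants already needs the responsibility-digraph conditions. Second, and more importantly, $h$ being absent at the moment its own call returns does not prevent a \emph{later sibling's} subtree from re-introducing $h$; if $h$ were present when that sibling's call returns, the parent's while-loop would select $h$ again (it is $\pi$-greater), destroying distinctness. What is actually needed is the stronger ``sweep'' statement, Proposition~\ref{sweep} of the paper: if {\sc Address}$(f,\sigma)$ returns at $\tau$ then $\tau\notin f$ and no flaw $\pi$-above $f$ that was absent in $\sigma$ is present in $\tau$. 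Applied to a child call on $g$ (which starts with all of $S_g\cap\Gamma_R(f)$ absent, by maximality of $g$), it shows no flaw of $\Gamma_R(f)$ that is $\pi$-above $g$ can ever be a later child; applied to top-level calls it gives distinctness of root labels.

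Proving that sweep statement is precisely where condition~2 of the responsibility digraph does real work, and your sketch of this ``delicate step'' mis-locates it: there is no re-parenting of causality chains --- the forest is simply the recursion tree --- and nothing needs $v_k$ to be ``still active.'' The paper's argument tracks, for the ambient call on $f$, the set of flaws in $S_f\cup\{f\}$ outside $\Gamma_R(f)$ that are present, and shows this set cannot grow across any child invocation: if such a flaw $h$ survived, its \emph{last} introducer $g_i$ in the child's recursion tree would give a root-to-$g_i$ path with $h\in\Gamma(g_i)$ but $h\notin\Gamma_R(g_j)$ for every node $g_j$ on the path (else the corresponding call could not have returned with $h$ present); a forward arc $g_i\to h$ contradicts the inclusion of forward arcs and self-loops in $R$, and a backward one contradicts condition~2 applied at the parent of $g_i$ (with $f$ serving as the parent at the top). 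Your proposal gestures at ``a careful induction on time along the recursion stack,'' but without identifying this monotone quantity and this path argument the encoding step --- and hence the whole bound --- is not established; the rest of your counting (weights $1/A_f$, $\mu_f$, the $\bigl(1+\max_f \mu_fA_f/\min_f A_f\bigr)^{|U(\sigma_1)|}$ root factor, and the union bound over final states) matches the proof of Lemma~\ref{lem:master}.
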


\begin{remark}
Since the causality digraph $C$ is, trivially, a responsibility graph, Theorem~\ref{LeftHanded} can be seen as a non-Markovian generalization of Theorem~\ref{asymmetric} in which flaw choice is driven by recursion and $R$.
\end{remark}

\section{Comparison to Our Work}\label{sec:disc}

Besides dispensing with the need for $\Omega$ to have product structure (variables) or symmetry (permutations), our setting has two additional benefits.

\subsection{State-dependent Transformations}

The LLL, framed as a result in probability, \emph{begins} with a probability measure on the set of objects $\Omega$. In terms of proving the existence of flawless objects, its value lies in that it delivers strong results even when the measure is chosen without \emph{any} consideration of the flaws (bad events). Indeed, most LLL applications simply employ the uniform measure on $\Omega$, a property that can render the LLL indistinguishable from magic. It is worth noting that in the presence of variables, the uniform measure is nothing but the product measure generated by sampling each variable according to the uniform measure on its domain.

All algorithmic versions of the LLL up to now can be seen as walks on $\Omega$ \emph{constrained} by the measure. For product measures, i.e., in the setting of Moser and Tardos~\cite{MT}, this means that the only transformation allowed is resampling all variables of a bad event, with each variable resampled independently of all others, using the same distribution \emph{every} time the variable is resampled, i.e., obliviously to the current state. The partial resampling framework of Harris and Srinivasan~\cite{partial_resampling} refines this to allow resampling a subset of an event's variables, but again only independently of one another and obliviously to the current state. Similarly, for the uniform measure on permutations~\cite{SrinivasanPerm}: the permuted elements whose images form a violated constraint must be reshuffled in a very specific and state-oblivious way, mandated by consistency with the uniform measure.

In contrast, our framework dispenses with the measure on $\Omega$ altogether allowing the set of transformations for addressing each flaw to depend \emph{arbitrarily} on the current state. This has three distinct effects:
\begin{itemize}
\item
It allows us to deal with settings in which both the set of objects $\Omega$ and the set of flaws are \emph{amorphous}, as in the case of rainbow Hamilton cycles and rainbow perfect matchings, something not possible with any previous algorithmic LLL results.
\item
In the case of permutations, where some structure is present, we derive the same main results as~\cite{SrinivasanPerm} with dramatically simpler proofs. Moreover, we have far greater freedom in the choice of algorithms since there is no constraint imposed by some measure.
\item
Finally, for the variable setting of Moser and Tardos~\cite{MT} we gain ``adaptivity to state". This allows us to address one of the oldest and most vexing concerns about the LLL (see the survey of Szegedy~\cite{mario_survey}), exemplified by the LLL's inability to establish the elementary fact that a graph with maximum degree $\Delta$ can be colored with $q=\Delta + 1$ colors. Specifically, imagine that to recolor a monochromatic edge $e$ we select an endpoint $v$ of $e$ arbitrarily and assign a new color $c$ to $v$. When the choice of $c$ must be uniform among \emph{all} colors, as mandated when using the uniform measure in the variable setting, the obliviousness of the choice necessitates the use of a large number of colors relative to $\Delta$ in order for new violations to become sufficiently rare for the method to terminate. Specifically, the LLL can only work when $q > \mathrm{e}\Delta$. On the other hand, in our setting, the color $c$ can be selected uniformly among the \emph{available} colors for $v$, i.e., the colors not appearing in $v$'s neighborhood, by taking the set of actions $A(f,\sigma)$ to be precisely the set of states that result by assigning available colors to $v$ in $\sigma$. Thus, as soon as $q \ge \Delta+1$, the causality digraph becomes empty and rapid termination follows trivially.
\end{itemize}

\subsection{Dependencies vs. Actions}

Unlike the variable setting of Moser and Tardos~\cite{MT} where the dependency relation between events is symmetric, our causality relation, similarly to the lopsided LLL, is not.  We consider asymmetry a significant structural feature of our work since, as is well-known~\cite{mario_survey}, the directed setting is strictly stronger than the undirected setting. For example,  there exist systems of events for which there exists a \emph{lopsided dependence} digraph sparser than any undirected dependence graph. Moreover, asymmetry is essential in our development of structured clause choice in the left-handed version of our theorem. 

At a high level, our results capture the directedness of the lopsided LLL, but with the far more flexible causality digraph replacing the lopsided dependence digraph. Concretely, our framework replaces the limited negative dependence condition of the lopsided LLL, which can be highly non-trivial to establish~\cite{lu2013quest}, with limited causality under atomicity, a condition that is both significantly less restrictive and far easier to check. Moreover, as mentioned earlier and to our pleasant surprise, in all applications we have considered so far we have found atomicity to be a very valuable \emph{aid} in the design of flaws and actions.

For example, in Section~\ref{sec:ham} we give the first efficient algorithm for finding rainbow Hamilton cycles in hypergraphs, as guaranteed to exist by the non-constructive results of~\cite{Hamilton1,Hamilton2}. When we tried to determine flaws and actions for this setting, to our delight we realized that we could just use one of the main technical propositions of~\cite{Hamilton2}, as it is equivalent to proving that for each flaw $f$ and $\sigma \in f$ there exists a set of actions such that the corresponding digraph $D$ is atomic. As~\cite{Hamilton2} is completely independent of our work we consider this ``coincidence" a nice testament to the naturalness of atomicity. 

In a different direction, in Section~\ref{sec:cb} we give an application regarding the Color-Blind index of Graphs. That setting highlights the importance of the directness of the causality graph demonstrates how directedness readily enables the formulation of ``obvious" flaws and actions. Finally,  our Theorem~\ref{olala} combines the benefits of directedness with the improvement of Bissacot et al., by restricting the summation to independent sets. For example, in Section~\ref{sec:latin} we show how Theorem~\ref{olala} allows us to also give an efficient algorithm for Latin Transversals matching the $\Delta \le (27/256)n$ bound of~\cite{SrinivasanPerm}. (Theorem~\ref{olala} can also benefit the application to rainbow matchings in Section~\ref{sec:match} but we chose to use Theorem~\ref{cor:simple} to keep the exposition simple).

While the main contribution of our framework lies in providing freedom in the design of the set of actions for addressing each flaw in each state (and thus going beyond the LLL), its main limitation is that we are restricted in performing uniform random walks in the corresponding directed graph $D$. That means, for example, that  our framework does not capture applications of the variable setting in which the product measure is not uniform over the domain of each variable, while the variable setting of Moser and Tardos~\cite{MT} captures these cases. We leave closing this gap as future work.

\subsection{Flaw Selection}\label{sec:choice}

As mentioned earlier, in Theorems~\ref{cor:simple}--\ref{olala} the necessary condition is independent of the flaw order $\pi$ and, therefore, if the condition is met the algorithm reaches a sink quickly for \emph{every} permutation $\pi$. As this is an unnecessarily luxurious conclusion, it is natural to try to sharpen the results by selecting the flaw order $\pi$ \emph{first}, so that the causality digraph is the image of the (much) sparser $D_{\pi}$ instead of $D$. However, since an arc $f \rightarrow g$ will exist in the causality digraph $C$ as long as there is even one transition addressing $f$ that causes $g$ in $D_{\pi}$, it is not at all clear that sparsifying $D$ using a generic $\pi$ helps significantly. At the same time, if there exists a ``special" $\pi$ that does help significantly, coming up with it is non-trivial. For example, in the setting of satisfiability, if $f,g$ are clauses that share variable $v$ with opposite signs, then not having the arc $f \to g$ in $C$ requires either that addressing $f$ should never involve flipping $v$, cutting $A_f$ by half, or finding a permutation of the clauses such that in \emph{every} state in which $f$ is the greatest violated clause, $g$ is satisfied by some variable other than $v$. The only non-trivial case we know where the latter can be done is when $F$ is satisfiable by the pure literal heuristic. 

As far as we know, the method by which a bad event (flaw) is selected in each step does not affect the performance of any of the algorithmic extensions of the LLL even though in the setting of~\cite{MT} this choice can be arbitrary. The only use we know of this freedom lies in enabling parallelization when $\Omega$ is a structured set, i.e., when $\Omega$ has product structure\cite{MT,szege_meet,Haeupler}, or it is a set of permutations~\cite{SrinivasanPerm}. Since we allow $\Omega$ to be completely amorphous, it is not readily clear how to approach parallelization in our setting.

Finally, we note that flaw choice in our framework is not really restricted to using a single permutation. For example, in the non-recursive setting, before beginning the walk we can select an arbitrary infinite sequence of permutations $\pi_1, \pi_2, \ldots$ of $F$ and in the $i$-th step of the walk address the greatest flaw present according to $\pi_i$. If $\pi_1 = \pi_2 = \cdots$ we are back to the single-permutation setting, while if, for example, each $\pi_i$ is an independent uniformly random permutation, the algorithm addresses a uniformly random flaw present in each step. At the same time, we must make clear that our framework does \emph{not} accomodate arbitrary flaw selection functions and, in fact, we do not see how to extend it beyond permutation-based choice.  To keep the presentation of our results uniform (and compact) we have stated both Theorems~\ref{asymmetric} and~\ref{LeftHanded} in terms of a single permutation. We do point out the one place in our proofs that changes (trivially) to handle multiple permutations. 

\section{Mapping Bad Trajectories to Forests}\label{sec:forests}

We prove Theorems~\ref{asymmetric}--\ref{LeftHanded} in three parts. In the first part, carried out in this section, we show how to represent each sequence of $t$ steps that does not reach a sink as a forest with $t$ vertices, where the forests have different characteristics for each of the walks of Theorems~\ref{asymmetric}--\ref{LeftHanded}. Then, in Section~\ref{sec:BPstate}, we state a general lemma for bounding the running time of different walks in terms of properties of their corresponding forests and show how it readily implies each of Theorems~\ref{asymmetric}--\ref{LeftHanded}. Finally, in Section~\ref{sec:bpproof} we prove the lemma itself. In a first reading the reader may want to skip Section~\ref{left_forests} (and, perhaps also Section~\ref{rec_forests}). The sections can be read later, in order, after the material of Section~\ref{TheProof} has been absorbed.

In the following to lighten notation we will assume that $\sigma_1 \in \Omega$ is fixed but arbitrary.

\begin{definition} A walk  $ \Sigma =  \sigma_1 \xrightarrow{w_1} \sigma_2 \xrightarrow{w_2} \sigma_3 \cdots \sigma_{t} \xrightarrow{w_t}\sigma_{t+1}$  is called a $t$-trajectory.  A $t$-trajectory is \emph{bad} if it only goes through flawed states. Let $\mathrm{Bad}(t)$ be the set of bad $t$-trajectories starting at $\sigma_1$.
\end{definition}

Our first step is the same as Moser's~\cite{moser}, generalized to the notion of atomicity. It amounts to defining an almost-1-to-1 map from bad $t$-trajectories to sequences of $t$ flaws. While the map is not 1-1, crucially, it becomes 1-1 with the addition of a piece of information whose size is \emph{independent} of $t$. 
\begin{definition}
If $ \Sigma =  \sigma_1 \xrightarrow{w_1} \sigma_2 \xrightarrow{w_2}\sigma_3 \cdots \sigma_{t} \xrightarrow{w_t}\sigma_{t+1}$ is a bad $t$-trajectory, the sequence $W(\Sigma) = w_1, w_2, \ldots,w_t$, i.e., the sequence of flaws labeling the arcs $\Sigma$, is the \emph{witness} of $\Sigma$.  
\end{definition}
\begin{claim}\label{invert}
If $D$ is atomic, then the map from bad $t$-trajectories $\Sigma \to \langle W(\Sigma), \sigma_{t+1} \rangle$ is one-to-one.
\end{claim}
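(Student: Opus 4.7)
The plan is a short backward reconstruction argument, matching Moser's original bookkeeping idea~\cite{moser} adapted to the more abstract atomic setting. Given the data $\langle W(\Sigma),\sigma_{t+1}\rangle = \langle w_1,w_2,\ldots,w_t;\,\sigma_{t+1}\rangle$, I would recover the intermediate states $\sigma_t, \sigma_{t-1}, \ldots, \sigma_1$ one at a time by reverse induction on $i \in \{t,t-1,\ldots,1\}$, thereby showing that the entire trajectory $\Sigma$ is determined by the pair $\langle W(\Sigma),\sigma_{t+1}\rangle$.

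For the inductive step, suppose $\sigma_{i+1}$ has already been identified. Because $\Sigma$ is a genuine walk in $D$, the arc $\sigma_i \xrightarrow{w_i} \sigma_{i+1}$ belongs to $D$, so there exists at least one arc into $\sigma_{i+1}$ carrying the label $w_i$. Atomicity asserts that for every pair (flaw, target state) there is at most one such arc, hence $\sigma_i$ must be its tail and is therefore uniquely determined by $\sigma_{i+1}$ and $w_i$. Iterating from $i=t$ down to $i=1$ reconstructs the full sequence $\sigma_1,\sigma_2,\ldots,\sigma_{t+1}$ together with its labels, so any two bad $t$-trajectories with the same witness and same final state must coincide.

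There is no genuine obstacle here: the content of the claim is exactly that atomicity was defined so as to permit this unambiguous time-reversal. The one potential source of confusion worth flagging explicitly is that bad $t$-trajectories may (and typically will) revisit states and reuse flaw-labels many times; this causes no trouble because atomicity is a per-target, per-label statement rather than a global injectivity requirement, so the reconstruction succeeds regardless of how many times the pair $(w_i,\sigma_{i+1})$ previously occurred along $\Sigma$.
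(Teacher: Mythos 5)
Your proposal is correct and is essentially the paper's own argument: the paper's proof is just the one-line version ``atomicity implies $\sigma_t$ is the unique state with an arc $\sigma_t \xrightarrow{w_t} \sigma_{t+1}$, etc.,'' i.e., the same backward reconstruction by reverse induction that you spell out.
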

\begin{proof}
The atomicity of  $D$ implies that $\sigma_{t}$ is the unique state in $\Omega$ with an arc $\sigma_{t} \xrightarrow{w_t} \sigma_{t+1}$. Etc.
\end{proof}
Thus, $|\mathrm{Bad}(t)|$ is bounded by the number of possible witness $t$-sequences multiplied by $|\Omega|$.

\subsection{Forests of the Uniform Walk (Theorem~\ref{asymmetric})} \label{TheProof}

Recall that for any ordering $\pi$ of the flaws we denote by $D_{\pi}$ the digraph that results from $D$ if at each state $\sigma$ we only retain the outgoing arcs labeled by $I_{\pi}(\sigma)=I(\sigma)$. To analyze the uniform random walk on $D_{\pi}$ we will represent witnesses as sequences of sets reflecting causality. 

Let $B_i$ be the set of flaws ``introduced" by the $i$-th step of the walk, where a flaw $f$ is said to ``introduce itself" if it remains present after an action from $A(f, \sigma_{i})$ is taken. Formally,
\begin{definition}
Let  $B_0 = U(\sigma_1)$. For $1 \le i\le t-1$, let $B_i = U(\sigma_{i+1}) \setminus ( U (\sigma_i) \setminus I(\sigma_i) )$. 
\end{definition}

Let $B_i^* \subseteq B_i$ comprise those flaws addressed in the course of the trajectory. Thus, $B_i^* = B_i \setminus \{O_i \cup N_i \}$, where $O_i$ comprises any flaws in $B_i$ that were eradicated ``collaterally" by an action taken to address some other flaw, and $N_i$ comprises any flaws in $B_i$ that remained present in every subsequent state after their introduction without being addressed. Formally,
\begin{definition}\label{def:bs}
The \emph{Break Sequence} of a bad $t$-trajectory is $B_0^*, B_1^*, \ldots, B_{t-1}^*$, where for $0 \le i \le t-1$,
\begin{align*}
O_i 		& 	= 	\{f \in B_i \mid  \exists j \in [i+1,t] :   f \notin U(\sigma_{j+1})  \wedge  \forall \ell \in [i+1,j]:   f \ne w_{\ell} \} \\
N_i 		&  	= 	\{f \in B_i \mid \forall j \in [i+1, t] :    f \in 	 U(\sigma_{j+1})  \wedge  \forall \ell \in [i+1,t]:   f \ne w_{\ell} \} \\
B_i^*  	&  = B_i \setminus \{O_i \cup N_i \} \enspace .
\end{align*}

\end{definition}

Given $B_0^*,B_1^*,\ldots,B_{i-1}^*$ we can determine $w_1, w_2, \ldots, w_i$ inductively, as follows. Define $E_1 = B_0^*$, while for $i \ge 1$,
\begin{equation}\label{eq:ri}
E_{i+1} = (E_{i} - w_i) \cup B_i^*  \enspace .
\end{equation}
By construction, the set $E_i \subseteq U(\sigma_i)$ is guaranteed to contain $w_i = I(\sigma_i)=I(U(\sigma_i))$. Since $I = I_{\pi}$ returns\footnote{If instead of $\pi$ we had a sequence of permutations $\pi_1, \pi_2, \ldots$, we would simply use $I_{\pi_i}$ to determine $w_i$ from $E_i$.} the greatest flaw in its input according to $\pi$, it must be that $I_{\pi}(E_i) = w_i$. We note that this is the only place we ever make use of the fact that the function $I$ is derived by an ordering of the flaws, thus guaranteeing that for every $f \in F$ and $S \subseteq F$, if $I(S) \neq f$ then $I(S\setminus f) = I(S)$. 

We next give another 1-to-1 map, mapping each Break Sequence to a vertex-labelled rooted forest. Specifically, the \emph{Break Forest} of a bad $t$-trajectory $\Sigma$ has $|B_0^*|$ trees and $t$ vertices, each vertex labelled by an element of $W(\Sigma)$. To construct it we first lay down $|B_0^*|$ vertices as roots and then process the sets $B_1^*, B_2^*, \ldots$ in order, each set becoming the progeny of an already existing vertex (empty sets, thus, giving rise to leaves).
\begin{algorithm}\caption*{{\bf Break Forest Construction}}
\begin{algorithmic}[1]
\State Lay down $|B_0^*|$ vertices, each labelled by a different element of $B_0^*$, and let $V$ consist of these vertices
\For {$i=1$ to $t-1$} 
\State Let $v_i$ be the vertex in $V_i$ with greatest label according to $\pi$
\State Add $|B_i^*|$ children to $v_i$, each labelled by a different element of $B_i^*$
\State Remove $v_i$ from $V$; add to $V$ the children of $v_i$.
\EndFor
\end{algorithmic}
\end{algorithm}

Observe that even though neither the trees, nor the nodes inside each tree of the Break Forest are ordered, we can still reconstruct $W(\Sigma)$ since the set of labels of the vertices in $V_i$ equals $E_i$ for all $0 \le i\le t-1$.

\subsection{Forests of the Recursive Walk (Theorem~\ref{olala})}\label{rec_forests}

We will represent each bad $t$-trajectory, $\Sigma$, of the Recursive Walk as a vertex-labeled unordered rooted forest, having one tree per invocation of procedure {\sc{address}} by procedure {\sc{eliminate}}. Specifically, to construct the \emph{Recursive Forest} $\phi=\phi(\Sigma)$ we add a root vertex per invocation of {\sc{address}} by {\sc{eliminate}} and one child to every vertex for each (recursive) invocation of {\sc{address}} that it makes. As each vertex corresponds to an invocation of {\sc{address}} (step of the walk) it is labeled by the invocation's flaw-argument. Observe now that (the invocations of {\sc{address}} corresponding to) both the roots of the trees and the children of each vertex appear in $\Sigma$ in their order according to $\pi$. Thus, given the unordered rooted forest $\phi(\Sigma)$ we can order its trees and the progeny of each vertex according to $\pi$ and recover $W(\Sigma)$ as the sequence of vertex labels in the preorder traversal of the resulting ordered rooted forest.

Recall the definition of graph $G$ on $F$ from Definition~\ref{defn:G}. We will prove that the flaws labeling the roots of a Recursive Forest are independent in $G$ and that the same is true for the flaws labelling the progeny of every vertex of the forest. To do this we first prove the following.

\begin{proposition}\label{prop:rid}
If {\sc{address}}($f,\sigma$) returns at state $\tau$, then $U(\tau) \subseteq U(\sigma) \setminus (\Gamma(f) \cup \{f\})$. 
\end{proposition}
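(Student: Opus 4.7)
The plan is to use strong induction on the total number of actions (arc traversals in $D$) performed during a call to $\address$, proving simultaneously for all flaws $f$ and all states $\sigma$ with $f\in U(\sigma)$ that $U(\tau)\subseteq U(\sigma)\setminus(\Gamma(f)\cup\{f\})$, where $\tau$ is the state in which $\address(f,\sigma)$ returns. I would first isolate what a single action can do, and then propagate the conclusion through the while loop by invoking the induction hypothesis on each (strictly shorter) nested recursive call.

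For the one-step lemma, let $\sigma'\in A(f,\sigma)$ be the state reached by the first action of $\address(f,\sigma)$. I claim
\[
U(\sigma')\;\subseteq\;(U(\sigma)\setminus\{f\})\cup\Gamma(f).
\]
Indeed, if $g\in U(\sigma')$ and $g\notin\Gamma(f)$, then the arc $\sigma\xrightarrow{f}\sigma'$ of $D$ witnesses that $f$ does not cause $g$, so both disjuncts ``$g=f$'' and ``$g\not\ni\sigma$'' of Potential Causality fail; hence $g\neq f$ and $g\in U(\sigma)$, giving $g\in U(\sigma)\setminus\{f\}$. The possible self-loop $f\in\Gamma(f)$ requires no separate treatment because the ``$g=f$'' clause automatically forces $f\in\Gamma(f)$ whenever $f\in U(\sigma')$.

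Next, setting $\sigma_0:=\sigma'$, I would show by a short inner induction on the number $k$ of completed iterations of the while loop that the state $\sigma_k$ reached after $k$ iterations satisfies the invariant $U(\sigma_k)\subseteq (U(\sigma)\setminus\{f\})\cup\Gamma(f)$. The base case $k=0$ is the one-step lemma. For the inductive step, iteration $k{+}1$ calls $\address(g_k,\sigma_k)$ for some $g_k\in U(\sigma_k)\cap\Gamma(f)$; since this nested call performs strictly fewer actions than the outer call, the outer strong-induction hypothesis applies to it and yields $U(\sigma_{k+1})\subseteq U(\sigma_k)\setminus(\Gamma(g_k)\cup\{g_k\})\subseteq U(\sigma_k)$, preserving the invariant.

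Finally, when the loop terminates at the returned state $\tau$, the loop condition gives $U(\tau)\cap\Gamma(f)=\emptyset$, while the invariant gives $U(\tau)\subseteq(U(\sigma)\setminus\{f\})\cup\Gamma(f)$. Intersecting collapses the right-hand side to $U(\sigma)\setminus\{f\}$, and combined once more with $U(\tau)\cap\Gamma(f)=\emptyset$ this upgrades to $U(\tau)\subseteq U(\sigma)\setminus(\Gamma(f)\cup\{f\})$, as desired. No real difficulty is anticipated; the only point that requires care is the possible self-loop $f\in\Gamma(f)$, which the ``$g=f$'' clause of Potential Causality handles for free.
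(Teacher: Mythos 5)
Your proof is correct. It differs from the paper's in its mechanics: the paper argues directly on the trajectory of the outer invocation, splitting $\Gamma(f)$ into $U(\sigma)\cap\Gamma(f)$ (whose members block the ``while'' test in line~\ref{a_key_diff}, so the call cannot return while any of them is present) and $\Gamma(f)\setminus U(\sigma)$, handling the latter by an extremal argument --- take the \emph{last} nested invocation {\sc Address}$(g,\sigma'')$ whose action caused the flaw $h$, note $h\in\Gamma(g)$ forces $h$ to be absent when that invocation returns, and maximality prevents $h$ from ever being reintroduced. You instead do a strong induction on the number of actions performed by the call, combining a one-step causality lemma ($U(\sigma')\subseteq(U(\sigma)\setminus\{f\})\cup\Gamma(f)$, with the self-loop clause handling the case that $f$ survives its own action) with the loop invariant that nested calls only shrink the flaw set, and then intersecting with the exit condition $U(\tau)\cap\Gamma(f)=\emptyset$. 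The two arguments use the same two ingredients (the definition of potential causality and the ``while'' condition), but your inductive decomposition has the advantage of explicitly delivering the containment $U(\tau)\subseteq U(\sigma)$ for \emph{all} flaws, including those outside $\Gamma(f)\cup U(\sigma)$ and $f$ itself when $f\notin\Gamma(f)$ --- cases the paper's text leaves to the reader (its ``last causing invocation'' argument does cover them, but is only spelled out for $h\in\Gamma(f)\setminus U(\sigma)$); the paper's version, in exchange, avoids setting up any induction. Both establish exactly the monotonicity property that the subsequent analysis of Recursive Forests relies on.
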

\begin{proof}
Let $\sigma'$ be any state subsequent to the {\sc{address}}($f,\sigma$) invocation.
If any flaw in $U(\sigma) \cap \Gamma(f)$ is present at $\sigma'$, the ``while" condition in line~\ref{a_key_diff} of the Recursive Walk prevents {\sc{address}}($f,\sigma$) from returning. On the other hand, if $h \in \Gamma(f) \setminus U(\sigma)$ is present in $\sigma'$, then there must have existed an invocation {\sc{address}}($g, \sigma''$), subsequent to invocation {\sc{address}}($f, \sigma$), wherein addressing $g$ caused $h$. Consider the last such invocation. If $\sigma'''$ is the state when this invocation returns, then $h \not\in U(\sigma''')$, for otherwise the invocation could not have returned, and by the choice of invocation, $h$ is not present in any subsequent state between $\sigma'''$ and $\tau$.  
\end{proof}

Let $(f_i,\sigma_i)$ denote the argument of the $i$-th invocation of {\sc{address}} by {\sc{eliminate}}. By Proposition~\ref{prop:rid}, $\{U(\sigma_i)\}_{i \geq 1}$ is a decreasing sequence of sets. Thus, the claim regarding the root labels follows trivially: for each $i \ge 1$, the flaws in $\Gamma(f_i) \cup f_i$ are not present in $\sigma_{i+1}$ and, therefore, are not present in $U(\sigma_j)$, for any $j \ge i+1$. The proof for the children of each node is essentially identical. If a node corresponding to an invocation {\sc{address($f, \sigma$)}} has children corresponding to (recursive) invocations with arguments $\{(g_i,\sigma_i)\}$, then the sequence of sets $\{U(\sigma_i)\}_{i \geq 1}$ is decreasing. Thus, the flaws in $\Gamma(g_i) \cup g_i$ are not present in $\sigma_{i+1}$ and, therefore, not present in $U(\sigma_j)$, for any $j \ge i+1$.

\subsection{Forests of the LeftHanded Walk (Theorem~\ref{LeftHanded})}\label{left_forests}

Recall that $\pi$ is an arbitrary permutation of $F$ and that the Lefthanded Walk is the Recursive Walk modified by replacing $\Gamma(f)$ with $\Gamma_R(f)$ in line~\ref{a_key_diff}, where $R$ is a responsibility graph for $D$ with respect to $\pi$. We map the bad trajectories of the LeftHanded Walk into vertex-labeled unordered rooted forests, exactly as we did for the bad trajectories of the Recursive Walk, i.e., one tree per invocation of {\sc{address}} by {\sc{eliminate}}, one child per recursive invocation of {\sc{address}}, all vertices labeled by the flaw-argument of the invocation. The challenge for the Lefthanded Walk is to prove that the labels of the roots are distinct and, similarly, that the labels of the children of each node are distinct. (For Break Forests both properties were true automatically; for Recursive Forests we established the stronger property that each of these sets of flaws is independent). To do this we first prove the following analogue of Proposition~\ref{prop:rid}. 

\begin{definition}
Let $S_f$ denote the set of flaws strictly greater than $f$ according to $\pi$. For a state $\sigma$ and a flaw $f \in U(\sigma)$, let $W(\sigma,f) = U(\sigma) \cap S_f$.  
\end{definition}

\begin{proposition}\label{sweep}
If {\sc{address}}$(f,\sigma)$ returns at state $\tau$, then $\tau \not\in f$ and $W(\tau,f) \subseteq W(\sigma,f)$.
\end{proposition}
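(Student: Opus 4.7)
The plan is to prove both parts of the proposition by contradiction, in each case locating the \emph{last} transition in the execution of {\sc Address}$(f,\sigma)$ that introduces the offending flaw and then tracing the call stack at that moment. The basic source of contradiction is the fact that, since {\sc Address}$(f,\sigma)$ has returned at $\tau$, its while-loop exit condition gives $U(\tau)\cap\Gamma_R(f)=\emptyset$.

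For (a), suppose $f\in U(\tau)$ for contradiction. If $f$ is already present in the state $\sigma_1$ right after the initial action $\sigma\to\sigma_1$, then by the causality definition $f$ causes itself, so $f\to f$ is a self-loop of $C$ and hence an arc of $R$, putting $f\in\Gamma_R(f)$ and contradicting the exit condition. Otherwise, let $\rho\to\rho'$ be the last transition in the execution with $f\notin\rho$ and $f\in\rho'$; by maximality, $f$ persists from $\rho'$ through $\tau$. This transition is the initial action of some innermost call {\sc Address}$(g,\rho)$, and since $g\in U(\rho)$ but $f\notin U(\rho)$ we have $g\ne f$ and $g\to f$ exists in $C$. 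I would then denote the call-stack chain at this moment by $q_0=g,q_1,\ldots,q_m=f$, noting that $q_{i+1}\to q_i$ lies in $R$ because $q_i$ was drawn from the $B$-set inside $q_{i+1}$'s while loop. Case split on $g\to f$: if $g\to f\in R$, then $f\in\Gamma_R(q_0)$; if $g\to f\notin R$, the arc is backward (forward arcs of $C$ are in $R$), and Condition~2 of the responsibility digraph applied with $k=q_1$ gives $q_1\to f\in R$, so $f\in\Gamma_R(q_1)$. In either sub-case, some call on the stack has $f$ in its $\Gamma_R$, so its while loop cannot terminate while $f$ is present, and since $f$ persists forever after $\rho'$, that call never returns, contradicting termination of {\sc Address}$(f,\sigma)$. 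The corner case $q_1=f$ just forces $f\to f\in R$, again giving $f\in\Gamma_R(f)$.

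For (b), I would run an identical argument on any $\ell>f$ with $\ell\in U(\tau)\setminus U(\sigma)$: pick the last transition $\rho\to\rho'$ with $\ell\notin\rho$ and $\ell\in\rho'$, labeled by some $g\ne\ell$ inside an innermost {\sc Address}$(g,\rho)$, giving $g\to\ell$ in $C$. The same dichotomy --- $g\to\ell\in R$ gives $\ell\in\Gamma_R(q_0)$, and $g\to\ell\notin R$ is backward so Condition~2 puts $\ell\in\Gamma_R(q_1)$ --- then lets the call-stack ``stuck'' argument produce the contradiction. The one corner case worth noting separately is when the stack has collapsed to {\sc Address}$(f,\sigma)$ alone: then $g=f$, and since $\ell>f$ the arc $f\to\ell$ is forward in $C$ hence in $R$, so $\ell\in\Gamma_R(f)$ contradicts the exit condition outright.

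The main obstacle I anticipate is the bookkeeping of the call-stack traversal: verifying that exactly one application of Condition~2 suffices (it does, because the condition shifts the responsibility for the arc one step up the stack, and the parent's while loop traps the flaw immediately), and correctly dispatching all corner cases where the innermost flaw $g$ coincides with $f$ or $\ell$ or where the call stack degenerates to a single call.
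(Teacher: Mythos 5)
Your proof is correct and is essentially the paper's argument in different bookkeeping: the paper phrases the same mechanism as a monotonicity claim for the sets $Q(f,\omega)$ of flaws in $S_f^{+}\setminus\Gamma_R(f)$ (proved by exhibiting the path in the recursion tree down to the introducing flaw), whereas you locate the last introducing transition and walk the call stack, but the decisive steps coincide --- forward arcs and self-loops of $C$ lie in $R$, a single application of Condition~2 shifts the missing backward arc to the parent invocation, and a flaw in some active call's $\Gamma_R$-neighborhood that persists prevents that call (hence {\sc Address}$(f,\sigma)$) from returning. The corner cases you flag ($f\in U(\sigma')$ after the initial action, $g=f$, $q_1=f$) are exactly the items the paper disposes of in its three bullet points, so nothing is missing.
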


\begin{proof}
The execution of {\sc{address}}$(f,\sigma)$ generates a recursion tree, each node labeled by its flaw-argument. Thus, the root is labelled by $f$ and each child of the root is labelled by a flaw in $\Gamma_{R}(f)$. Let $S^{+}_f = S_f \cup \{ f \}$. For a state $\omega$, let $Q(f,\omega)$ be the set of flaws in  $S^{+}_f  \setminus \Gamma_R(f)$ that are present in $\omega$. We claim that if $g \in \Gamma_R(f)$ and {\sc{address}}$(g,\omega)$ terminates at $\omega'$, then $Q(f,\omega') \subseteq Q(f,\omega)$. This suffices to prove the lemma as:
\begin{itemize}
\item
By the claim, any flaw in $Q(f,\tau) \setminus Q(f,\sigma)$ must be introduced by the action $\sigma \xrightarrow{f} \sigma'$ taken by the original invocation {\sc{address}}$(f,\sigma)$. Thus, $Q(f,\tau) \subseteq Q(f,\sigma')$.
\item
All flaws in $S^{+}_f$ introduced by $\sigma \xrightarrow{f} \sigma'$ are in $\Gamma_R(f)$, since $R$ contains all forward edges and self-loops of $C$. Thus, $Q(f,\sigma') \subseteq Q(f,\sigma)$. In particular, $f$ can only be present in $\sigma'$ if $f \in \Gamma_R(f)$.
\item
No flaw in $\Gamma_R(f)$ can be present in $\tau$ since {\sc{address}}$(f,\sigma)$ returned at $\tau$.
\end{itemize}

To prove the claim, consider the recursion tree of {\sc{address}}$(g,\omega)$. If $h \in Q(f,\omega')$ and $h \notin Q(f,\omega)$, then there has to be a path $g_1=g, g_2, \ldots, g_{i}$ from the root of the recursion tree of  {\sc{address}}$(g,\omega)$ to a node $g_{i}$ such that $h \in \Gamma (g_{i})$ but $h \notin \Gamma_{R}(g_{j})$ for each $j \in [i]$. To see this, notice that since $h$ was absent in $\omega$ but is present in $\omega'$, it must have been introduced by some flaw $g_i$ addressed during the execution of {\sc{address}}$(g,\omega)$. But if $h$ belonged in the neighborhood with respect to $R$ of any of the flaws on the path from the root to $g_i$, the algorithm would have not terminated. However, such a path can not exist, as it would require all of the following to be true, violating the definition of responsibility digraphs (let $g_0=f$ for notational convenience): (i) $h \in \Gamma(g_i)$, (ii) $h \notin \Gamma_{R}(g_i)$, (iii) $g_i \in \Gamma_R(g_{i-1})$,  and (iv) $ h \notin \Gamma_R(g_{i-1})$.
\end{proof}

To establish the distinctness of the root labels, observe that each time procedure {\sc{eliminate}} is invoked at a state $\sigma$, by definition of $I_{\pi}$, we have   $W(\sigma, (I_{\pi}(\sigma)) = \emptyset$. By Proposition~\ref{sweep}, if the invocation returns at state $\tau$, then neither $I_{\pi}(\sigma)$ nor any greater flaws are present in $\tau$.  Therefore, {\sc{eliminate}}  invokes  {\sc{address}} at most once for each $f \in F$. To see the distinctness of the labels of the children of each node, consider an invocation of {\sc{address($f, \sigma$)}}. Whenever this invocation recursively invokes {\sc{address($g, \sigma'$)}}, where $g \in \Gamma_R(f)$, by definition of $I_{\pi}$, every flaw in $S_g \cap \Gamma_{R}(f)$  is absent from $\sigma'$. By Proposition~\ref{sweep}, whenever each such invocation returns neither $g$ nor any of the flaws in $S_g \cap \Gamma_{R}(f)$ are present implying that {\sc{address($f, \sigma$)}} invokes {\sc{address($g, \sigma'$)}} at most once for each $g \in \Gamma_R(f)$.

\section{A General Forest Lemma and Proof of Theorems~\ref{cor:simple}--\ref{LeftHanded}}\label{sec:BPstate}

Recall that we are considering random walks on the multi-digraph $D$ on $\Omega$ which has an arc $\sigma \xrightarrow{f} \tau$ for each $\sigma \in \Omega$, flaw $f \ni \sigma$, and $\tau \in A(f,\sigma)$. Recall also that the different walks of Theorems~\ref{asymmetric}--\ref{LeftHanded} differ only on \emph{which} flaw to address among those present in the current state $\sigma$. Having chosen to address a flaw $f \ni \sigma$, all three walks proceed in the exact same manner, selecting the next state  $\tau \in A(f,\sigma)$ \emph{uniformly} at random. In Section~\ref{sec:forests} we saw how to map the bad trajectories of the different walks into unordered rooted forests so that given a trajectory's forest and final state we can reconstruct it.  

Next we will formulate and prove a general tool for bounding the running time of different walks on $D$.

\begin{lemma}[Witness Forests]\label{lem:master}
Consider \emph{any} random walk on $D$ which (i) in every flawed state $\sigma$, after choosing (arbitrarily) which flaw $f \ni \sigma$ to address, selects the next state $\tau \in A(f,\sigma)$ uniformly at random, and (ii) whose bad trajectories can be mapped into unordered rooted forests satisfying the following properties, so that given a trajectory's forest we can reconstruct the sequence of flaws addressed along the trajectory:
\begin{enumerate}
\item 
Each vertex of the forest is labeled by a flaw $f \in F$.\label{cond:label}
\item 
The flaws labeling the roots of the forest are distinct and, as a set, belong in the set  $\mathrm{Roots}(\sigma_1) \subseteq 2^{F}$.\label{cond:roots}
\item 
The flaws labeling the children of each vertex are distinct.\label{cond:distinct}
\item
If a vertex is labelled by flaw $f$, the labels of its children, as a set, belong in the set $\mathrm{List}(f) \subseteq 2^{\Gamma(f)}$.\label{cond:list}
\end{enumerate}
If there exist positive real numbers $\{\mu_f\}$ such that for every flaw $f \in F$,
\[
\theta_f := \frac{1}{\mu_f A_f} \sum_{S \in \List(f)}  \prod_{g \in S} \mu_g < 1 \enspace , 
\]
then for any $\sigma_1 \in \Omega$, a walk started at $\sigma_1$ reaches a sink within $(T_0+s)/\delta$ steps with probability at least $1-2^{-s}$, where $\delta = 1 - \max_{f \in F} \theta_f$, and
\[
T_0 		= 
\log_2 | \Omega | + \left(\max_{ S \in \mathrm{Roots}(\sigma_1)} |S|\right) \cdot \log_2 \left(1 + \frac{ \max_{f \in F}(\mu_f A_f)}{  \min_{f \in F} A_f} \right) \enspace  .
\]
\end{lemma}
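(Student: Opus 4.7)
The plan is a Moser-style entropy / generating-function argument. Since $D$ is atomic, Claim~\ref{invert} combined with the forest-reconstructability hypothesis makes the map $\Sigma\mapsto(\phi(\Sigma),\sigma_{t+1})$ injective on bad $t$-trajectories. Because the walk samples uniformly from $A(f,\sigma)$, a set of size at least $A_f$, any specific trajectory has probability at most $w(\phi):=\prod_{v\in\phi}1/A_{\ell(v)}$, and therefore
\[
\Pr[\text{no sink reached by time }t] \;\le\; |\Omega|\cdot P_t, \qquad P_t\;:=\;\sum_{\phi}w(\phi),
\]
where the sum runs over all $t$-vertex forests satisfying conditions~\ref{cond:label}--\ref{cond:list}.

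The heart of the argument is to bound $P_t$ via a subtree generating function. For each flaw $f$ define $G_f(z):=\sum_{\tau}z^{|\tau|}w(\tau)$ over valid rooted trees whose root is labeled $f$; the recursive structure of valid trees, together with condition~\ref{cond:list}, yields
\[
G_f(z) \;=\; \frac{z}{A_f}\sum_{S\in\List(f)}\prod_{g\in S}G_g(z).
\]
I will show $G_f(z^{\star})\le\mu_f$ at $z^{\star}:=1/(1-\delta)$ by induction on a depth cut-off. The base case uses that $\emptyset\in\List(f)$ (leaves must occur for the forests to be finite), which forces $1/(\mu_fA_f)\le\theta_f\le 1-\delta$, i.e.\ $\mu_fA_f\ge z^{\star}$. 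The inductive step substitutes $G_g(z^{\star})\le\mu_g$ into the recursion and invokes $\sum_{S\in\List(f)}\prod_{g\in S}\mu_g\le(1-\delta)\mu_fA_f$ to give $G_f(z^{\star})\le z^{\star}(1-\delta)\mu_f=\mu_f$. Extracting the $[z^t]$ coefficient, the total $w$-weight of $t$-vertex trees rooted at $f$ is at most $\mu_f(1-\delta)^t$. Because forests factor as products of trees over their (distinct, by condition~\ref{cond:distinct}) roots, the $w$-weight of $t$-vertex forests with root set $R$ is at most $(1-\delta)^t\prod_{f\in R}\mu_f$, whence $P_t\le(1-\delta)^tH$ with $H:=\sum_{R\in\mathrm{Roots}(\sigma_1)}\prod_{f\in R}\mu_f$.

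To conclude, I combine the elementary inequality $1-\delta\le 2^{-\delta}$ with the combinatorial bound $H\le(1+M/m)^{r}$, where $M=\max_f\mu_fA_f$, $m=\min_fA_f$, and $r=\max_{S\in\mathrm{Roots}(\sigma_1)}|S|$, to obtain $|\Omega|\,P_t\le 2^{T_0-\delta t}$, which is at most $2^{-s}$ exactly when $t\ge(T_0+s)/\delta$. The main obstacle is the last estimate on $H$: the trivial identity $\sum_{R\subseteq U}\prod_{f\in R}\mu_f=\prod_{f\in U}(1+\mu_f)\le(1+M/m)^{|U|}$ gives only $|U|$ in the exponent, so to bring the exponent down to $r$ one must exploit the specific structure that $\mathrm{Roots}(\sigma_1)$ acquires in each of Theorems~\ref{cor:simple}--\ref{LeftHanded}: the full power set of $U(\sigma_1)$ for the uniform and lefthanded walks (where $r=|U(\sigma_1)|$ and the bound is immediate), and, for the recursive walk, the sweeping property of Proposition~\ref{prop:rid} which forces root prefixes to lie in $\Indep(U(\sigma_1))$ and caps the number of root positions the algorithm can actually fill at $r$.
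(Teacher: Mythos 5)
Your skeleton is sound and genuinely different in mechanism from the paper's. Where you weight each witness forest by $\prod_{v}1/A_{\ell(v)}$, bound $\Pr[\mathrm{bad}]\le|\Omega|\sum_{\phi}w(\phi)$ via the injection $\Sigma\mapsto(\phi(\Sigma),\sigma_{t+1})$, and control the weighted sum through the tree generating functions $G_f$ with $G_f(1/(1-\delta))\le\mu_f$, the paper instead passes to ``versioned'' flaws (each arc labeled $f$ is split into $Z/A_f$ copies, $Z=\mathrm{lcm}\{A_f\}$, so every versioned step has probability at most $1/Z$) and then counts versioned witness forests by a rejection-sampling Galton--Watson process in the style of Pegden, using $\sum_{\phi}p_{\phi}\le 1$ and optimizing the parameters $x_f$. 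These are two implementations of the same counting, and your depth--cut-off induction is a clean substitute for the branching process; the small wrinkle in your base case (nothing in the hypotheses guarantees $\emptyset\in\List(f)$ for every $f$) disappears if you start the induction from $G_f^{(0)}=0$.

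The genuine gap is the step you flag yourself and do not close: you need $H:=\sum_{R\in\mathrm{Roots}(\sigma_1)}\prod_{f\in R}\mu_f\le(1+M/m)^{r}$ with $r=\max_{S\in\mathrm{Roots}(\sigma_1)}|S|$, $M=\max_f\mu_fA_f$, $m=\min_fA_f$, and this is simply false for an arbitrary $\mathrm{Roots}(\sigma_1)\subseteq 2^{F}$ (take $\mathrm{Roots}(\sigma_1)$ to be $N$ singletons with all $\mu_f=1$ and all $A_f$ equal: then $H=N$ while $(1+M/m)^{r}=2$). Deferring to ``the specific structure in Theorems~\ref{cor:simple}--\ref{LeftHanded}'' inverts the logical order, since the lemma is stated for arbitrary $\mathrm{Roots}(\sigma_1)$ and is what those theorems are deduced from; and your sketch for the recursive walk does not suffice in any case, because Proposition~\ref{prop:rid} caps the \emph{number} of roots by $r$, whereas $H$ sums over all admissible root sets, and $\Indep(U(\sigma_1))$ can contain far more than $\binom{r}{k}$ sets of size $k$. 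The paper supplies exactly this missing ingredient at the corresponding point of its argument: in Lemma~\ref{branchingLemma} the roots are drawn as a $k$-subset of an $m$-element universe, so the root contribution enters as $\binom{m}{k}$ times the maximum over $S\in\mathrm{Roots}$ of $\prod_{f\in S}\mu_fA_f/\min_g A_g$, i.e.\ the admissible root sets of size $k$ are (implicitly) injected into the $k$-subsets of an $m$-element set --- immediate when $\mathrm{Roots}(\sigma_1)=2^{U(\sigma_1)}$. To complete your proof you must either establish such a cardinality bound on $\mathrm{Roots}(\sigma_1)$ (or build it into the hypotheses, as the paper's counting effectively does), or settle for the weaker conclusion $T_0=\log_2|\Omega|+\log_2 H$.
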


\subsection{Proof of Theorems~\ref{asymmetric}--\ref{LeftHanded} from Lemma~\ref{lem:master}}

Theorem~\ref{asymmetric} follows immediately by observing that Break Forests trivially satisfy the conditions of Lemma~\ref{lem:master} with $\mathrm{Roots}(\sigma_1) = 2^{U(\sigma_1)}$ and $\mathrm{List}(f) = 2^{\Gamma(f)}$.  Theorem~\ref{olala} follows by observing that Recursive Forests satisfy the conditions with $\mathrm{Roots}(\sigma_1) = \Indep(U(\sigma_1))$ and $\mathrm{List}(f) = \Indep(\Gamma(f))$.   Theorem~\ref{LeftHanded} follows by observing that LeftHanded Forests satisfy the conditions  with $\mathrm{Roots}(\sigma_1) = 2^{U(\sigma_1)}$ and $\mathrm{List}(f) = 2^{\Gamma_R(f)}$. 

\subsection{Proof of Theorem~\ref{cor:simple} from Theorem~\ref{asymmetric}}\label{sec:proofcor}

Let $Z \ge 1$ be the least common multiple of the integers $\{ A_f : f \in F    \}$. Let 
$
d := \max_{ f \in F}  \sum_{ g \in  \Gamma(f)   } \frac{Z}{A_g}
$.
Observe that $d \ge \max_{f \in F } | \Gamma(f) |$ since $|\Gamma(f)| \ge 1$ for every $f \in F$, and that for any set $S \subseteq \Gamma(f)$,
\begin{equation}\label{noversions}
\prod_{g \in S} \frac{Z}{A_g} 
= 
\prod_{g \in S}{\binom{Z/A_g}{1}} \le 
\binom{\sum_{g \in \Gamma(f)} Z/A_g}{|S|}
\le  \binom{d}{|S|} \enspace . 
\end{equation}

Taking $\mu_f  =  Z/(dA_f)>0$ and invoking~\eqref{noversions} we see that the hypothesis of Theorem~\ref{cor:simple} implies
\[
\frac{1}{ \mu_f A_f}  \prod_{g \in \Gamma(f)} (1+\mu_g)   =  
\frac{d}{Z}   \sum_{S \subseteq \Gamma(f)}  \prod_{g \in S }  \frac{Z}{d A_g} = \frac{1-\delta}{\mathrm{e}}
\sum_{i = 0}^{d}   \left( \frac{1}{d} \right)^{i} \binom{d}{i}  = \frac{1-\delta}{\mathrm{e}} \left( 1 + \frac{1}{d} \right)^ d \le 1 -\delta \enspace. 
\]
Regarding the running time, observe that $\max_{f \in F}  (\mu_f A_f ) / \min_{f \in F} A_f \le (Z/d) \le 1$ since $|\Gamma(f)| \ge 1$.

\section{ Proof of Lemma~\ref{lem:master}}\label{sec:bpproof}

\subsection{Versions of Flaws}

Recall that we are considering random walks on the multidigraph $D$ on $\Omega$ which has an arc $\sigma \xrightarrow{f} \tau$ for each $\sigma \in \Omega$, flaw $f \ni \sigma$, and $\tau \in A(f,\sigma)$. For the proof it will be convenient to transform $D$ to another multidigraph $D^*$ as described below. The transformation is trivial from an algorithmic point of view, but helps with the eventual counting. Let $Z\ge 1$ be the least common multiple of the integers $\{A_f : f \in F \}$. 

To form the multidigraph $D^*$ we replace each arc $\sigma \xrightarrow{f} \tau$ in $D$ with $Z/A_f$ arcs from $\sigma$ to $\tau$, carrying labels $f_1, f_2, \ldots, f_{Z/A_f}$. We refer to each such label as a \emph{version} of flaw $f$. To move in $D^*$ from a state $\sigma$, exactly as in $D$, the walk first determines which flaw $f \ni \sigma$ to address and then chooses $\tau \in A(f,\sigma)$ uniformly at random. The only difference is that having done so, now the walk also consumes an additional amount of randomness to ``choose a version" of $f$, i.e., to chose one of the $Z/A_{f}$ arcs from $\sigma$ to $\tau$. Thus, the probability distribution on sequences of states of the walk in $D^*$ is identical to the one in $D$  (indeed the two walks can be coupled so that the sequences are always equal).

\begin{definition}
A trajectory $\Sigma = \sigma_1 \xrightarrow{ w_1 } \sigma_2 \xrightarrow{w_2 } \sigma_3 \cdots \sigma_{t} \xrightarrow{w_t } \sigma_{t+1} $  on $D^*$ where  $w_i $ is a version of the flaw addressed at the $i$-step is called a versioned $t$-trajectory. A versioned $t$-trajectory is \emph{bad} if it only goes only through flawed states. Let $\mathrm{VerBad}(t)$ be the set of all versioned bad $t$-trajectories.
\end{definition}

Observe that to move in $D^*$ from any flawed state $\sigma$ to the next state the walk must select among 
\begin{equation}\label{alkis_is_gone}
|A \left(f,\sigma \right)| \cdot \frac{Z}{A_{f}}  \ge Z
\end{equation}
possibilities, implying that every versioned bad $t$-trajectory has probability at most $1/Z^t$. Having a uniform upper bound of probability as a function of length is precisely why we introduced versioned flaws.

To prove Lemma~\ref{lem:master} we will give $T_0 = T_0 \left(|\Omega|, U(\sigma_1), \{ (A_f,\mu_f): f \in F \} \right)$ such that the probability that a versioned $(T_0+s)$-trajectory on $D^*$ is bad is exponentially small in $s$. Per our discussion above to prove this it suffices to prove that $|\mathrm{VerBad}(t)|/Z^{t}$ is exponentially small in $s$ for $t=T_0+s$. Since $D$ is atomic, we can reconstruct any bad versioned $t$-trajectory from $\sigma_{t+1}$ and the sequence of versioned flaws addressed. We are thus left to count the number of possible sequences of versioned flaws.

Per the hypothesis of Lemma~\ref{lem:master}, each bad $t$-trajectory on $D$ is associated with a rooted labeled witness forest with $t$ vertices such that given the forest we can reconstruct the sequence of flaws addressed along the $t$-trajectory. To count sequences of versioned flaws we relabel the vertices of the witness forest to carry not only the flaw addressed, but also the integer denoting its version (in the corresponding walk on $D^*$). We refer to the resulting object as the versioned  witness forest. Recall that neither the trees, nor the nodes inside each tree in the witness forest are ordered. To facilitate counting we fix an arbitrary ordering $\psi$ of $F$ and map each versioned witness forest  into the unique ordered forest that result by ordering the trees in the forest according to the labels of their roots and similarly ordering the progeny of each vertex according to $\psi$ (recall that both the flaws labeling the roots and the flaws labeling the children of each vertex are distinct). 

Having induced this ordering for the purpose of counting, we will encode each versioned witness forest as a rooted, ordered $d$-ary forest $T$ with exactly $t$ nodes, where $d = \max_{f \in F} \sum_{g \in \Gamma(f)}Z/A_g$ (recall that $Z$ is the least common multiple of the integers $\{A_f: f \in F \}$). In a rooted, ordered $d$-ary forest both the roots and the at most $d$ children of each vertex are ordered. We think of the root of $T$ as having  reserved for each flaw $f \in \mathrm{Roots}(\sigma_1)$ a group of $Z/A_f$ slots, where the $i$-th group of slots corresponds to the $i$-th largest flaw in $F$ according to $\psi$. If $f \in \mathrm{Roots}(\sigma_1)$ is the $i$-th largest flaw in $F$ according to $\psi$ and its version in $V$ is $j$, then we fill the $j$-th slot of the $i$-th group of  slots (recall that the flaws labeling the roots of the witness forest are distinct and that, as a set, belong in the set $\mathrm{Roots}(\sigma_1)$).

Each node $v$ of $T$ corresponds to a node of the witness forest and therefore to a flaw $f$ that was addressed at some point in the $t$-trajectory of the algorithm. Recall now that each node in the witness forest that is labelled by a flaw $f$  has children labelled by distinct flaws in $\Gamma(f)$. We thus think of each node $v$ of $T$ as having precisely $Z/A_g$ slots reserved for each flaw $g \in \Gamma(f)$ (and, thus, at most $d$ reserved slots in total). For each $g \in \Gamma(f)$ whose version is $j$, we fill the $j$-th slot reserved for $g$ and make it a child of $v$ in $T$. Thus, from $T$ we can reconstruct the sequence of versioned flaws addressed with the algorithm. 

At this point we could proceed and bound $|\mathrm{VerBad}(t)|$ by the number of all $d$-ary ordered forests. Indeed, doing so would yield Theorem~\ref{asymmetric}. Such a counting, though, would ignore the fact that the set of flaws labelling the progeny of a node labelled by $f$ is not an arbitrary element of $2^{\Gamma(f)}$ but an element of $\List(f)$. Thus, not every ordered $d$-ary forest is a possible versioned witness forest.  To quantify this observation, we use ideas  from~\cite{PegdenIndepen}. Specifically, we introduce a branching process that produces only ordered $d$-ary forests that correspond to versioned witness forests and bound $|\mathrm{VerBad}(t)|$ by analyzing it.  Before describing the branching process, we introduce some conventions and definitions regarding versions of flaws:
\begin{itemize}

\item For each  $S \subseteq F$ we will denote by $\Versioned(S)$ the set formed by replacing each $f \in S$ by its versions $f_1,f_2,\ldots,f_{Z/A_f}$. For example, $\Versioned(F)$  contains every version of every flaw.

\item For each flaw $f$, we define $\List'(f)$ to be the set that results by replacing each $\{g^1,g^2, \ldots,g^k \} \in \List(f)$ by the $\prod_{i = 1}^{k} Z/A_{g_i}$ sets of the form $\{ g^1_{i_1}, g^2_{i_2}, \ldots, g^k_{i_k} \}$, where $g^i_{i_j}$ is the $i_j$ version of flaw $g^i$.

\item We assign to each flaw $f \in F$ a real number $x_f > 1$.

\item Versioned flaws inherit all the features of the underlying flaw. That is, for each $f_i \in \Versioned(F)$:

\begin{itemize}
\item $x_{f_i} := x_{f}$
\item $\Gamma(f_i) := \Gamma(f)$ 
\item  $\List(f_i) := \List(f)$
\item  $\List'(f_i):= \List'(f)$
\end{itemize}
\end{itemize}

Write $\mathrm{Roots}(\sigma_1) = \mathrm{Roots}$ to simplify notation and let $m = \max_{ S \in \mathrm{Roots}} |S|$. Our branching process takes as input an integer $r \le m$. To start the process we choose an $r$-subset $R$ of $F$ uniformly at random and create $r$ roots, each labeled by a different element of $R$. In each subsequent round, each node $u$ with label $\ell$ ``gives birth" by rejection sampling. Specifically, for each versioned flaw $g_i \in \Versioned(\Gamma(\ell))$ independently, with probability $1/x_{g_i}$ we add a vertex with label $g_i$ as a child of $u$. If the resulting set of children of $u$ is in $\List'(\ell)$ we accept the birth. If not, we delete the children created and try again. Note that while the roots of the resulting trees are labeled by flaws, all other nodes are labeled by versioned flaws. It is not hard to see that this process creates every possible versioned witness forest with $r$ \emph{unversioned} roots with positive probability. Specifically, for a vertex labeled by $\ell$, every set $S \not\in\List'(\ell) $ receives probability 0, while every set $S\in \List'(\ell)$ receives probability proportional to
\[
w_{\ell}(S) = \prod_{g \in S} \frac{1}{x_g} \prod_{h \in \Versioned \left(\Gamma(\ell) \right) \setminus S} \left(1- \frac{1}{x_{h}}\right) \enspace .
\]

To express the exact probability received by each $S\in\List'(\ell)$ we define
\begin{equation}\label{eq:d_def}
Q(S) = \prod_{g\in S} \frac{1}{x_{g}-1}
\end{equation}
and let $Z_{\ell} = \prod_{f \in \Versioned \left( \Gamma(\ell)  \right)}\left(1 - \frac{1}{x_f}\right)$. 
We claim that $w_{\ell}(S) = Z_{\ell} Q(S)$. To see the claim observe that
\[
\frac{ w_{\ell}(S)}
{Z_{\ell}}
=  \frac{ \prod_{g \in S} \frac{1}{x_g} \prod_{h \in \Versioned \left(\Gamma(\ell) \right) \setminus S} \left(1- \frac{1}{x_{h}}\right) }
{\prod_{f \in \Versioned \left( \Gamma(\ell)  \right)}(1 - \frac{1}{x_f}) }  
=  \frac{ \prod_{g \in S} \frac{1}{x_g} }{\prod_{g \in S}(1 - \frac{1}{x_g})   }  
= Q(S) \enspace .
\]
Therefore, each $S\in\List'(\ell)$ receives probability equal to
\begin{equation}\label{eq:sing_birth}
\frac{w_{\ell}(S)}{\sum_{B \in \List'(\ell)} w_{\ell}(B)}
=
\frac
{Q(S) Z_{\ell}}
{\sum_{B \in\List'(\ell)} Q(B) Z_{\ell}}
=\frac{Q(S)}{\sum_{B \in\List'(\ell)} Q(B)}
 \enspace .
\end{equation}

\begin{lemma}\label{branchingLemma}
For any versioned witness forest $\phi$ with set of root-labels $R_{\phi}$, the branching process described above with input $|R_{\phi}|$ produces $\phi$ with probability 
\[
p_{\phi} = \left({\binom{m}{|R_{\phi}|}}  Q(R_{\phi}) \prod_{v \in \phi} \left[(x_v-1)\sum_{S \in\List'(v)} Q(S)\right]  \right)^{-1} \enspace .
\]
\end{lemma}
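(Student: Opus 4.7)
The plan is to express $p_\phi$ as a product of conditionally independent contributions — one for the initial root-label selection and one for the rejection-sampling birth at each vertex of $\phi$ — and then to simplify via a combinatorial identity afforded by the forest structure.

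To begin, I would observe that once the roots are laid down, the births at distinct vertices are conditionally independent given their labels, so
\[
p_\phi \;=\; \Pr[\text{root-labels} = R_\phi] \cdot \prod_{v \in \phi} \Pr\bigl[\text{children of } v \text{ in the process} = C_v\bigr],
\]
where $C_v \in \mathrm{List}'(v)$ denotes the set of children of $v$ in $\phi$ (and $C_v = \emptyset$ for leaves). The initial step contributes a factor $1/\binom{m}{|R_\phi|}$ from the uniform choice of an $|R_\phi|$-subset, and equation (\ref{eq:sing_birth}) gives $\Pr[\text{children of } v = C_v] = Q(C_v)/\sum_{B \in \mathrm{List}'(v)}Q(B)$ for each $v$.

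The crux is then a telescoping identity: every non-root vertex $u$ of $\phi$ is a child of exactly one $v \in \phi$, so with $Q(S) = \prod_{g \in S}(x_g-1)^{-1}$ as in (\ref{eq:d_def}),
\[
\prod_{v \in \phi} Q(C_v) \;=\; \prod_{\substack{u \in \phi \\ u \text{ non-root}}} \frac{1}{x_u - 1} \;=\; \frac{\prod_{r \in R_\phi}(x_r - 1)}{\prod_{v \in \phi}(x_v - 1)} \;=\; \frac{1}{Q(R_\phi)\,\prod_{v \in \phi}(x_v - 1)}.
\]
Plugging this into the previous display will recover exactly the claimed formula for $p_\phi$. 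The convention $x_{f_i} = x_f$ ensures that the collapse across the forest treats versioned and unversioned flaws uniformly, requiring no further bookkeeping on versions.

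The main obstacle, such as it is, will be setting up the telescoping identity cleanly — in particular, distinguishing roots (which contribute only through $Q(R_\phi)$ rather than through a birth event indexed by themselves) from non-roots, and making sure the $1/\binom{m}{|R_\phi|}$ factor is correctly justified by the semantics of the initial $r$-subset sampling step. Both are routine once set up, and no deeper idea beyond conditional independence of births and the single counting identity above is required.
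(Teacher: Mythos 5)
Your proposal is correct and follows essentially the same route as the paper's proof: the paper also writes $1/p_{\phi}$ as $\binom{m}{r}\prod_{v}\frac{\sum_{S\in\List'(v)}Q(S)}{Q(N(v))}$ (root choice times the per-vertex birth probabilities from~\eqref{eq:sing_birth}) and then collapses $\prod_{v}Q(N(v))$ into $\prod_{v\in\phi\setminus R_{\phi}}\frac{1}{x_v-1}$, which is exactly your telescoping identity over non-root vertices. Nothing essential differs.
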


\begin{proof} 
Let $|R_{\phi}|=r$ and for each node $v$ of $\phi$, let $N(v)$ denote the set of labels of its children. By~\eqref{eq:sing_birth},
\begin{align*}
\frac{1}{p_{\phi}} & =  \binom{m}{r} \prod_{v \in \phi} \frac{\sum_{S \in\List'(v)} Q(S)}{Q(N(v))} \\
& = \binom{m}{r} \, \frac{\prod_{v \in \phi} \sum_{S \in\List'(v)} Q(S)} {\prod_{v \in \phi \setminus R_{\phi}} \frac{1}{x_v -1}}\\
& =  \binom{m}{r} Q(R_{\phi}) \prod_{v \in \phi} \left[(x_v-1)\sum_{S \in\List'(v)} Q(S)\right] \enspace .
\end{align*}
\end{proof}

Let $\mathrm{VWF}(r,t)$ denote the set of versioned witness forests with $r$ unversioned roots and exactly $t$ nodes. Since $\sum_{\phi \in \mathrm{VWF}(r,t)} p_{\phi} \le 1$, it follows that $|\mathrm{VWF}(r,t)| \le \min_{\phi \in \mathrm{VWF}(t)} p_{\phi}^{-1}$ which, by Lemma~\ref{branchingLemma}, equals 
\begin{equation}\label{eq:vbfrt}
\binom{m}{r}
\max_{\phi \in \mathrm{VWF}(r,t)} 
\left\{ Q(R_{\phi}) \prod_{v \in \phi} \left[(x_v-1)\sum_{S \in\List'(v)} Q(S)\right] \right\}\enspace .
\end{equation}

Since for every $f \in F$, each $S \in \List(f)$ gives rise  to $\prod_{g \in S} Z/A_g$ sets in $\List'(f)$, we get~\eqref{multiculti} below. Setting $\mu_f  =  \frac{Z}{A_f  (x_f -1)}>0$ in~\eqref{multiculti} and recalling the definition of $\theta_f$ gives~\eqref{eq:theta_f_bound}. 
\begin{align}
\prod_{v \in \phi} \left[(x_v-1)\sum_{S \in\List'(v)} Q(S)\right] \nonumber 
&\le 
\left(\max_{f \in F} \left[ 
(x_{f} - 1) \sum_{S \in \List'(f) } Q(S)   
\right]\right)^{t} \nonumber \\
&=  
\left(
\max_{f\in F} 
\left[ 
(x_{f} - 1) \sum_{S \in \List(f)} \prod_{g \in S} \frac{Z}{A_g ( x_g - 1)} 
\right]
\right)^{t} \label{multiculti}  \\
&=  
\left(Z
\max_{f\in F} 
\theta_f
\right)^{t}
\enspace . \label{eq:theta_f_bound}
\end{align}
Substituting~\eqref{eq:theta_f_bound} into~\eqref{eq:vbfrt} yields
\[
|\mathrm{VWF}(r,t)| \le (\theta Z)^t
\binom{m}{r}
\max_{\phi \in \mathrm{VWF}(r,t)} 
Q(R_{\phi})  \enspace .
\]

To conclude let $\gamma = \max_{f \in F} Z/A_f$ and let $\mathrm{Roots}(r)$ denote the $r$-subsets in $\mathrm{Roots}$. Assigning all possible version combinations to the roots of each forest in $\mathrm{VWF}(r,t)$ and recalling~\eqref{eq:d_def} se see that
\[
\frac{|\mathrm{VerBad}(t)|}{Z^t} \le \theta^t \left( |\Omega|  \sum_{r = 0}^{m} \binom{m}{r}   \gamma^{r}\max_{ S \in \mathrm{Roots}(r)}   \prod_{f \in S} \frac{\mu_f A_f}{Z} \right)\enspace .
\] 
Recalling that every versioned $t$-trajectory has probability at most $1/Z^{t}$ we see that the binary logarithm of the probability that the walk does not encounter a flawless state within $t$ steps is at most $t  \log_2 \theta + T_0$, where 
\begin{eqnarray*}
 T_0 		& =&  \log_2 | \Omega| +  \log_2 \left( \sum_{r = 0}^{m}  \binom{m}{r} \gamma^{r}  \max_{ S \in \mathrm{Roots}(r)}   \prod_{f \in S} \frac{\mu_f A_f}{Z}   \right) \\
                & \le &  \log_2 | \Omega| + \log_2 \left( \sum_{r = 0}^{m}\binom{m}{r}  \left( \frac{\max_{f \in F}(\mu_f A_f)}{\min_{f \in F} A_f  } \right) ^{r}   \right)  \\
                & = & \log_2 | \Omega | + \max_{ S \in \mathrm{Roots}} |S| \cdot \log_2 \left(1 + \frac{ \max_{f \in F}(\mu_f A_f)}{  \min_{f \in F} A_f} \right) \enspace .
\end{eqnarray*}

Therefore, if $t = (T_0 + s) / \log_2 (1/ \theta) \le (T_0 + s) / \delta$, the probability that the  random walk on $D^*$, and therefore on $D$, does not reach a flawless state within $t$ steps is at most  $2^{-s}$.

\section{A First Application -  Hamilton Cycles in Hypergraphs}\label{sec:ham}

\subsection{Preliminaries}

An (edge) coloring of a hypergraph $H(V,E)$ is a function $\phi: E \rightarrow \mathbb{N}$ assigning natural numbers (colors) to the edges of $H$. A hypergraph $H$ together with a given coloring $\phi$ will be dubbed a \emph{colored hypergraph}. We will say that $e_1 \neq e_2 \in E$ are \emph{adjacent} if $e_1 \cap e_2 \neq \emptyset$. A subhypergraph $S$ of a colored hypergraph $H$ is said to be \emph{properly} colored if every two adjacent edges of $S$ receive different colors. If, further, every edge of $S$ receives a different color, i.e., if $\phi$ is injective on $S$, we will say that $S$ is \emph{rainbow}. 

To promote the presence of properly colored and rainbow subhypergraphs we introduce the following restrictions on hypergraph colorings. For a coloring $\phi$ and a color $i \in \mathbb{N}$, let $H_{\phi}^i = H[\phi^{-1}(i)]$ denote the hypergraph induced by the edges of color $i$ in $\phi$. We say that $\phi$ is \emph{$r$-degree bounded} if $H_{\phi}^i$ has maximum degree at most $r$, for all $i \in \mathbb{N}$. If $H_{\phi}^i$ has at most $r$ edges, for all $i \in \mathbb{N}$,  we say that $\phi$ is \emph{$r$-bounded}.

We investigate the existence of properly colored and rainbow Hamilton cycles in colored $k$-uniform complete hypergraphs, $k \ge 3$. (A hypergraph is $k$-\emph{uniform} if every edge has size $k$; it is complete if all $k$-element subsets of the vertices form edges).  For $1 \le \ell < k$, an $\ell$\emph{-overlapping cycle} is a $k$-uniform hypergraph in which, for some cyclic ordering of its vertices, every edge consists of $k$ consecutive vertices (in the cyclic ordering), and every two consecutive edges (in the natural ordering of the edges induced by the ordering of the vertices) share exactly $\ell$ vertices. Thus, the number of edges in an $\ell$-overlapping cycle with $s$ vertices is $\lfloor s /(k- \ell)\rfloor$. The two extreme cases $\ell = 1$ and $\ell = k-1$ are referred to as, respectively, \emph{loose} and \emph{tight} cycles.  
\begin{remark}\label{too_funny}
A tight cycle on $s$ vertices contains an $\ell$-overlapping cycle on the same vertex set (with the same cyclic ordering), whenever $k-\ell$ divides $s$.
\end{remark}

Given a $k$-uniform hypergraph $H$ on $n$ vertices where $k-\ell$ divides $n$, an $\ell$-overlapping cycle is called \emph{Hamilton} if it goes through every vertex of $H$, that is, if $s = n $. We denote such a Hamilton cycle by $C_{n}^{(k)}(\ell)$. Let $K_n^{(k)}$ denote the complete $k$-uniform hypergraph on $n$ vertices. 

In~\cite{Hamilton1}, Dudek, Frieze and Ruci\'{n}ski proved the following.
\begin{theorem}[\cite{Hamilton1}]\label{Rainbow}
For every $1 \le \ell < k $ there is a constant $c = c(k, \ell)$ such that if $n$ is sufficiently large and $k - \ell$ divides $n$, then any $cn^{k-\ell}$-bounded coloring of $K_{n}^{(k)}$ contains a rainbow copy of $C_{n}^{(k)}(\ell)$. 
\end{theorem}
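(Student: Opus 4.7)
The plan is to instantiate Theorem~\ref{cor:simple} directly. Take $\Omega$ to be the set of all $\ell$-overlapping Hamilton cycles of $K_n^{(k)}$, so that $\log_2|\Omega| = O(n\log n)$, and let $\sigma_1$ be any element of $\Omega$ (constructed greedily). For each unordered pair $\{e_1,e_2\}$ of edges of $K_n^{(k)}$ with $\phi(e_1) = \phi(e_2)$, introduce a flaw $f = f_{\{e_1,e_2\}}$ consisting of the cycles $H \in \Omega$ that contain both $e_1$ and $e_2$; a sink is precisely a rainbow copy of $C_n^{(k)}(\ell)$. Because a Hamilton cycle has $n/(k-\ell)$ edges, $|U(\sigma_1)| = O(n^2)$ for any choice of initial cycle.

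For each $H \in f$, define $A(f,H)$ as the set of cycles obtained from $H$ by a bounded-range ``surgery'' that deletes one of $e_1,e_2$ (chosen by fixed convention) and reroutes the cycle through a short replacement path, in the style of the switching moves used by Dudek, Frieze and Ruci\'nski~\cite{Hamilton2}. Their main structural proposition, rephrased in our language, gives two things simultaneously: (i) such a family of moves can be constructed with $|A(f,H)| \ge \alpha\, n^{k-\ell}$ for an absolute constant $\alpha = \alpha(k,\ell) > 0$; and (ii) each move is uniquely reversible from its target cycle given the flaw label, i.e., the resulting digraph $D$ is atomic. Thus $A_f \ge \alpha\, n^{k-\ell}$ uniformly in $f$, which is exactly the coincidence the authors refer to in the introduction.

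A single surgery alters only $O(1)$ edges of $H$, so every flaw potentially caused by addressing $f$ must involve a newly inserted edge paired with another edge of $K_n^{(k)}$ of the same color. Since the coloring is $cn^{k-\ell}$-bounded, each edge has at most $cn^{k-\ell}-1$ other edges of its color, so $|\Gamma(f)| = O(c\, n^{k-\ell})$. Consequently,
\[
\sum_{g \in \Gamma(f)} \frac{1}{A_g} \;\le\; \frac{O(c\, n^{k-\ell})}{\alpha\, n^{k-\ell}} \;=\; O(c/\alpha) \;<\; \frac{1}{\mathrm{e}}
\]
once the constant $c = c(k,\ell)$ is chosen sufficiently small, and the resulting $\delta$ is $\Theta(1)$. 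Theorem~\ref{cor:simple} then guarantees that the uniform random walk on $D_\pi$ reaches a rainbow Hamilton cycle within $O(n^2 + s)$ steps with probability $1-2^{-s}$, yielding both the existence statement and a polynomial-time algorithm.

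The probabilistic bookkeeping is routine once Theorem~\ref{cor:simple} is available; the genuine work is the combinatorial design of the surgeries so that atomicity and $A_f = \Omega(n^{k-\ell})$ hold at the same time. This is where I expect the main technical effort and it is precisely what is supplied by the P\'osa-style rotation arguments of~\cite{Hamilton2}. For $\ell < k-1$ one can either adapt these rotations directly or exploit Remark~\ref{too_funny} to first produce a rainbow tight Hamilton cycle and then extract from it an $\ell$-overlapping one, so the core combinatorial content is the tight-cycle case $\ell = k-1$.
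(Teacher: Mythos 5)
Your overall strategy (flaws indexed by same-colored pairs of edges, switching-style actions, atomicity from reversibility, then Theorem~\ref{cor:simple}) is the same as the paper's, but the quantitative accounting has a genuine gap in the causality step. You argue that since a single surgery alters only $O(1)$ edges, $|\Gamma(f)| = O(c\,n^{k-\ell})$. That bounds the flaws caused by \emph{one particular transition}, but $\Gamma(f)$ in this framework is defined pessimistically: it contains $g$ if \emph{any} action at \emph{any} state of $f$ introduces $g$. Since you need $|A(f,H)| \ge \alpha n^{k-\ell}$ actions per state (and $f$ contains many states), the set of edges that can be newly inserted when addressing $f$ has size at least $\Omega(n^{k-\ell})$, so the pairs \{new edge, same-colored edge\} number $\Omega(c\,n^{2(k-\ell)})$ and the sum $\sum_{g\in\Gamma(f)} 1/A_g$ grows like $c\,n^{k-\ell}/\alpha$, which diverges with $n$; the condition of Theorem~\ref{cor:simple} fails. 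The missing idea is exactly what the paper's Proposition~\ref{TheWholePoint} supplies through the set $X_{e,f}$: the actions are drawn from $\mathcal{C}(X_{e,f})$, i.e., they are \emph{constrained never to contain any same-colored pair disjoint from $e\cup f$}, so every flaw that can be introduced lies in $Y_{e,f}$ (pairs meeting $e\cup f$), whose size is controlled by the boundedness of the coloring.

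Relatedly, your claimed amenability $A_f = \Theta(n^{k-\ell})$ is too small even against the correct neighborhood. Once actions are restricted as above, the number of potentially caused flaws with intersection parameter $\alpha$ is $\Theta(c\,n^{2k-\alpha-1})$ (and $\Theta(c\,n^{2k-2})$ for $\alpha=0$), so the amenability must match these exponents; this is precisely what the switching proposition distilled from~\cite{Hamilton1,Hamilton2} provides, namely disjoint action families of size $\delta n^{2k-2}$ resp.\ $\delta n^{2k-\alpha-1}$, making the ratio $O(c/\delta)$ a constant that can be pushed below $1/\mathrm{e}$. Two smaller points: the paper works in the space of \emph{tight} cycles and extracts the $\ell$-overlapping cycle via Remark~\ref{too_funny} (you mention this as an option; working directly with $\ell$-overlapping cycles would require redoing the switching machinery), and it suffices to define flaws only for same-colored pairs sharing at most $\ell$ vertices, since only those survive the extraction. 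With the action sets redesigned to exclude far-away monochromatic pairs and the amenability figures corrected, your argument becomes the paper's proof.
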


\begin{theorem}[\cite{Hamilton1}]\label{Proper}
For every $1 \le \ell < k$ there is a constant $d = d(k,\ell)$ such that if $n$ is sufficiently large and $k - \ell$ divides $n$, then any $dn^{k-\ell}$-degree bounded coloring of $K_{n}^{(k)}$ contains a properly \mbox{colored copy of $C_{n}^{(k)}(\ell)$.}
\end{theorem}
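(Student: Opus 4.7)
The plan is to apply Theorem~\ref{cor:simple} to a uniform random walk on a suitable set $\Omega$ of representations of Hamilton $\ell$-overlapping cycles of $K_n^{(k)}$, flagged by color conflicts between consecutive cycle edges. I take $\Omega$ to be the set of cyclic orderings of $[n]$; each $\sigma \in \Omega$ canonically defines the edges $e_1(\sigma),\dots,e_m(\sigma)$ of its Hamilton $\ell$-cycle, where $m = n/(k-\ell)$. For each $i \in [m]$ define
\[
f_i = \{\sigma \in \Omega : \phi(e_i(\sigma)) = \phi(e_{i+1}(\sigma))\} \enspace ,
\]
with indices taken mod $m$; the sinks of the walk are then precisely the properly colored copies of $C_n^{(k)}(\ell)$ guaranteed by the theorem.

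To address $f_i$ at a state $\sigma$, I perform a randomized swap that resamples $e_{i+1}$: uniformly choose a $(k-\ell)$-subset $P \subseteq [n]\setminus(e_i\cup e_{i+1})$ of target positions together with an ordering, and exchange the vertices at $P$ with the vertices of $e_{i+1}\setminus e_i$. This gives amenability $A_{f_i} = \Theta(n^{k-\ell})$. The induced multi-digraph $D$ is atomic once the swap rule is chosen so that $(P,\text{ordering})$ can be reconstructed from $(f_i,\tau)$: the vertices that were at $P$ in $\sigma$ are visible in $\tau$ at the fixed positions of $e_{i+1}(\tau)\setminus e_i(\tau)$, and a canonical rule for re-ordering pins down $P$ uniquely.

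For the causality bound, $\Gamma(f_i)$ decomposes into a local part (flaws $f_j$ for $j$ within $O(1)$ of $i$, contributing $O(1)$ terms) and a distant part. For distant $j$, the swap causes $f_j$ only if some $u \in e_{i+1}\setminus e_i$ is moved into a position inside $e_j\cup e_{j+1}$ so that the new edge has color equal to the unchanged neighbor. By the $dn^{k-\ell}$-degree boundedness of $\phi$, for each vertex $u$ and each target color $c$ there are at most $dn^{k-\ell}$ edges of color $c$ through $u$; the rigidity of the cyclic ordering then restricts each such edge to trigger a new flaw at only $O(1)$ values of $j$ (only those for which the $(k-1)$-complement of the edge forms a $(k-1)$-consecutive subset of the cycle). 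Carrying out this count for all $u\in e_{i+1}\setminus e_i$ yields $|\Gamma(f_i)| \le C(k,\ell)\,d\,n^{k-\ell}$ for a constant $C(k,\ell)$; together with $A_g = \Theta(n^{k-\ell})$, this gives $\sum_{g\in\Gamma(f_i)} 1/A_g \le O(d)$, which falls below $1/\mathrm{e}$ once $d = d(k,\ell)$ is chosen small enough. Since $|U(\sigma_1)| \le m$ and $\log_2|\Omega| = O(n\log n)$, Theorem~\ref{cor:simple} produces a properly colored copy of $C_n^{(k)}(\ell)$ in $O(n\log n)$ steps from any $\sigma_1 \in \Omega$, with probability $1 - o(1)$.

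The main obstacle, especially sharp in the tight-cycle case $\ell = k-1$ where $A_{f_i}$ is only $\Theta(n)$, is the interlocking design of the action and the causality count: one must arrange both that the swap rule is canonical enough to make $D$ atomic, and that the degree-boundedness of $\phi$ translates into the bound $|\Gamma(f_i)| = O(dn^{k-\ell})$ without the naive sum over cycle-colors blowing up to $\Omega(n)$. Refinements of the swap --- such as a P\'{o}sa-like cascade that forces only a constant number of ``distant'' edges to change per action, collapsing the effective color sum to $O(1)$ colors per swap --- are likely the mechanism by which the explicit constant $d(k,\ell)$ is finally pinned down.
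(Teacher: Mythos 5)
Your proposal has genuine gaps, and the most serious one is the causality count. In this framework $\Gamma(f_i)$ is defined pessimistically: $f_i \to f_j$ is an arc of $C$ as soon as there exists \emph{even one} state in $f_i$ and one action addressing $f_i$ that leaves $f_j$ present when it was absent. Because your flaws are indexed by cycle \emph{positions} and your swap may place the displaced vertices at an arbitrary position set $P$, for essentially every distant $j$ there is some cyclic ordering in $f_i$ and some choice of $P$ that creates a same-color conflict at positions $j,j+1$; hence $|\Gamma(f_i)| = \Theta(n/(k-\ell))$ no matter how small $d$ is. Your bound $|\Gamma(f_i)| \le C(k,\ell)dn^{k-\ell}$ is a per-transition conditional count, not a bound on the potential causality digraph, so the condition of Theorem~\ref{cor:simple} is not verified; in the worst case $\ell = k-1$ (where $A_g = \Theta(n)$) the sum $\sum_{g\in\Gamma(f_i)}1/A_g$ is a constant that cannot be driven below $1/\mathrm{e}$ by shrinking $d$, which is exactly the difficulty you defer to an unspecified ``P\'{o}sa-like cascade''. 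Two further problems: (i) atomicity fails as described --- given $\tau$ and $f_i$ you see which vertices arrived at the positions of $e_{i+1}\setminus e_i$, but not where the displaced vertices went, i.e.\ not $P$, and distinct predecessors in $f_i$ (different $P$) can map to the same $\tau$; a ``canonical re-ordering'' does not repair this, since the lost information is the position set itself; (ii) for $\ell > k/2$ non-consecutive edges of an $\ell$-overlapping cycle also intersect, so flagging only consecutive pairs does not make sinks properly colored.

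The paper proceeds quite differently. Theorem~\ref{Proper} is deduced from Theorem~\ref{Proper2}, since a $dn^{k-\ell}$-degree-bounded coloring is $(\ell,dn^{k-\ell})$-bounded, and Theorem~\ref{Proper2} is made constructive by walking on $\Omega$ equal to the set of \emph{tight} Hamilton cycles, with one flaw $F_{e,f}$ for each pair of same-colored edges sharing between $1$ and $\ell$ vertices, and with actions supplied by Proposition~\ref{TheWholePoint}: the disjointness of the families $\{\mathcal{S}_C\}$ yields atomicity immediately, their sizes give amenability at least $\delta n^{2k-\alpha-1}$, and --- the key structural point you are missing --- because an arc of the causality digraph can only go to a flaw indexed by a same-colored pair meeting $e\cup f$, the boundedness hypothesis bounds $|\Gamma(F_{e,f})|$ directly (via the bounds on $|Y_{e,f}(\alpha)|$), so the sum is $O(d)$ and genuinely shrinks with $d$. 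Remark~\ref{too_funny} then extracts a properly colored $C_n^{(k)}(\ell)$ from the flawless tight cycle. Indexing flaws by conflicting edge pairs rather than by cycle positions, and taking the action sets from Proposition~\ref{TheWholePoint} rather than an ad hoc swap, is what simultaneously delivers atomicity and a sparse causality digraph; without an analogue of that proposition your argument does not go through.
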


In~\cite{Hamilton2}, Dudek and Ferrara strengthened Theorems~\ref{Rainbow}, \ref{Proper} as follows. Say that a coloring is \emph{$(a,r)$-bounded} if for each color $i$, every set of $a$ vertices is contained in at most $r$ edges of color $i$. An $r$-degree bounded coloring is, thus, $(1,r)$-bounded and an $r$-bounded coloring is $(0,r)$-bounded (as it has at most $r$ edges of color $i$). Thus, Theorem~\ref{Rainbow} follows from Theorem~\ref{Rainbow2} since for every $1 \le \ell \le k$, every $cn^{k-\ell}$-bounded coloring is both $(0, cn^{k-1})$-bounded and $(\ell,cn^{k-\ell})$-bounded. Similarly, Theorem~\ref{Proper} follows from Theorem~\ref{Proper2} since for every $1 \le \ell \le k$, every $dn^{k-\ell}$-degree-bounded coloring is $(\ell,dn^{k-\ell})$-bounded. 

\begin{theorem}[\cite{Hamilton2}]\label{Rainbow2}
For every $1 \le \ell < k$ there is a constant $c = c(k,\ell)$ such \mbox{that if $n$ is sufficiently large and} $k - \ell$ divides $n$, then any $(\ell,cn^{k-\ell})$-bounded coloring of $K_{n}^{(k)}$ that is $(0, cn^{k-1})$-bounded  contains a rainbow copy of $C_{n}^{(k)}(\ell)$. 
\end{theorem}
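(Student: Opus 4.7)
\medskip
\noindent\textbf{Proof proposal (plan).}
The plan is to deduce Theorem~\ref{Rainbow2} constructively by invoking Theorem~\ref{cor:simple}. Following Remark~\ref{too_funny}, I take the state space $\Omega$ to be the set of all \emph{tight} Hamilton cycles of $K_n^{(k)}$; each such state automatically carries an embedded $\ell$-overlapping Hamilton cycle $C_n^{(k)}(\ell)$ on the same cyclic vertex ordering, and it is the coloring of \emph{this} sub-cycle that we wish to make rainbow. A state is flawless exactly when the embedded $\ell$-overlapping sub-cycle is rainbow, so I define one flaw $f_{e,e'}$ for each unordered pair of edges $\{e,e'\}$ of the $\ell$-overlapping sub-cycle satisfying $\phi(e)=\phi(e')$. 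Thus $U(\sigma_1)$ is at most the number of color-coincident pairs in the starting cycle, which is only $O(n^2)$ and is absorbed into the $T_0$ bound.

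Next I define the actions. For a flaw $f_{e,e'}$ present at $\sigma$, I would use a rotation-type local modification of the tight cycle $\sigma$ anchored at the location of $e$ (or $e'$): delete a short segment of $\sigma$ containing $e$, and re-route through a carefully chosen block of $k-1$ consecutive vertices replaced by an arbitrary $(k-1)$-tuple, stitched back by a bridging argument. This is exactly the construction encapsulated in the main technical proposition of Dudek--Ferrara~\cite{Hamilton2}: for every pair $(f,\sigma)$ with $\sigma\in f$, it produces a family $A(f,\sigma)$ of valid tight Hamilton cycles of size $A_f=\Omega(n^{k-1})$, together with an explicit ``reverse rotation'' that from any $\tau\in A(f,\sigma)$ and from the flaw label $f$ alone recovers $\sigma$ uniquely. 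This last property is precisely atomicity of the induced digraph $D$, so Theorem~\ref{cor:simple} applies.

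The key calculation is the causality neighborhood $\Gamma(f_{e,e'})$. Any flaw $g$ caused by an action addressing $f_{e,e'}$ must involve at least one newly introduced edge of the tight cycle, and since $g$ lives in the $\ell$-overlapping sub-cycle, $g$ has the form $\{e'',e'''\}$ where $e''$ is one of the $O(1)$ new edges in the $\ell$-overlapping sub-cycle and $e'''$ is some other edge of the $\ell$-overlapping sub-cycle sharing a color with $e''$. The two boundedness hypotheses control the second edge: by $(0,cn^{k-1})$-boundedness there are at most $cn^{k-1}$ edges of color $\phi(e'')$ in $K_n^{(k)}$ that could play the role of a conflicting partner, while $(\ell,cn^{k-\ell})$-boundedness prevents the new edges $e''$ themselves from being chosen from an overly color-concentrated $\ell$-shadow (this is what makes the ``new edge'' generic). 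Combined, $|\Gamma(f)|=O(cn^{k-1})$, and hence
\[
\sum_{g\in\Gamma(f)}\frac{1}{A_g}=O\!\left(\frac{cn^{k-1}}{n^{k-1}}\right)=O(c),
\]
which is $<1/\mathrm{e}$ for $c=c(k,\ell)$ small enough. Theorem~\ref{cor:simple} then yields a rainbow $C_n^{(k)}(\ell)$ after $O(\log|\Omega|)=O(n\log n)$ steps with high probability.

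I expect the main obstacle to be the verification that the rotation provided by \cite{Hamilton2} actually yields atomicity in our sense, and the careful accounting of $|\Gamma(f)|$: one must check that both the number of newly introduced edges in the \emph{$\ell$-overlapping} sub-cycle and the freedom in choosing their endpoints interact correctly with the two bounded-coloring hypotheses, so that the ``only one of the two boundedness conditions does the heavy lifting per edge'' logic of \cite{Hamilton2} is preserved. Once these structural facts are in hand, the final application of Theorem~\ref{cor:simple} is mechanical.
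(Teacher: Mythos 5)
Your overall architecture matches the paper's: $\Omega$ is the set of tight Hamilton cycles, flaws are same-colored edge pairs present in the current cycle, Remark~\ref{too_funny} converts a flawless tight cycle into a rainbow $C_n^{(k)}(\ell)$, and Theorem~\ref{cor:simple} does the rest. But there is a genuine gap in the causality accounting, and it is exactly the point the whole construction is built around. You bound $|\Gamma(f)|$ by arguing that ``each transition introduces $O(1)$ new edges, each of which has at most $cn^{k-1}$ same-colored partners.'' The causality digraph, however, is a union over \emph{all} states $\sigma\in f$ and \emph{all} actions $\tau\in A(f,\sigma)$: a flaw $g$ enters $\Gamma(f)$ if even a single such transition causes it. Since your action sets have size $\Omega(n^{k-1})$ precisely because different actions introduce \emph{different} new edges, the set of new edges that can appear ranges over $\Omega(n^{k-1})$ possibilities, and each can conflict with essentially any surviving old edge of an (arbitrary) $\sigma\in f$ carrying the same color. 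Without further restrictions this makes $|\Gamma(f)|$ of order $cn^{2k-2}$ rather than $O(cn^{k-1})$, and with $A_g=\Omega(n^{k-1})$ the sum $\sum_{g\in\Gamma(f)}1/A_g$ is of order $cn^{k-1}$, which is nowhere near $1/\mathrm{e}$. The hypotheses on the coloring alone cannot repair this; the fix has to be built into the actions.

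This is what the paper does differently. It defines $X_{e,f}$ as the set of all same-colored pairs vertex-disjoint from $e\cup f$ and invokes Proposition~\ref{TheWholePoint} (synthesized from~\cite{Hamilton1}, not~\cite{Hamilton2}) with this $X$: every action is a cycle in $\mathcal{C}(X_{e,f})$, i.e.\ it contains no pair from $X_{e,f}$ at all, so the only flaws that can ever be caused are those indexed by pairs in $Y_{e,f}$, which must touch $e\cup f$. The role of~\cite{Hamilton2} is then to bound $|Y_{e,f}(\alpha)|$ by $O(ckn^{2k-2})$ for $\alpha=0$ and $O(ck^{\ell+1}n^{2k-\alpha-1})$ for $1\le\alpha\le\ell$, using both boundedness hypotheses. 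Matching these neighborhood sizes requires amenability $A_{g,h}\ge\delta n^{2k-2}$ (resp.\ $\delta n^{2k-\alpha-1}$), i.e.\ a ``double rotation'' family, not the single-rotation $\Omega(n^{k-1})$ family you propose; the disjointness of the family $\{\mathcal{S}_C\}$ is what gives atomicity. So your identification of the main obstacle (``careful accounting of $|\Gamma(f)|$'') is apt, but the proposal as written lacks the key device—constraining the action sets so that flaws disjoint from $e\cup f$ can never be introduced—without which the Theorem~\ref{cor:simple} condition cannot be met. (Your flaw definition restricted to edges of the embedded $\ell$-overlapping sub-cycle, and the $O(n^2)$ bound on $|U(\sigma_1)|$, are harmless variations.)
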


\begin{theorem}[\cite{Hamilton2}]\label{Proper2}
For every $1 \le \ell < k$ there is a constant $d = d(k,\ell)$ such that if $n$ is sufficiently large and $k-\ell$ divides $n$, then any $(\ell,d n^{k-\ell})$-bounded coloring of $K_{n}^{(k)}$ contains a properly colored of $C_{n}^{(k)}(\ell)$.
\end{theorem}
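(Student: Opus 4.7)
The plan is to apply Theorem~\ref{cor:simple} with state space $\Omega$ equal to the set of all $\ell$-overlapping Hamilton cycles of $K_n^{(k)}$. Cyclically labelling the edges of a cycle $\sigma\in\Omega$ as $e_1^\sigma,\ldots,e_m^\sigma$ with $m=n/(k-\ell)$, for each cyclic position $i\in[m]$ define a flaw $f_i=\{\sigma\in\Omega:\phi(e_i^\sigma)=\phi(e_{i+1}^\sigma)\}$; a state is then flawless precisely when it is a properly colored $C_n^{(k)}(\ell)$. The initial state $\sigma_1$ can be any $\ell$-overlapping Hamilton cycle of $K_n^{(k)}$, of which one exists by Remark~\ref{too_funny} plus the obvious tight cycle on $[n]$.

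For the actions $A(f_i,\sigma)$ I would use the switching scheme underlying the main technical proposition of Dudek and Ferrara~\cite{Hamilton2}: at a conflict at position $i$, one rewires the cycle inside a constant-size window around positions $i,i{+}1$ by replacing a $(k-\ell)$-tuple of ``swing'' vertices with a new $(k-\ell)$-tuple drawn from $V$, splicing the resulting path back into a Hamilton cycle. Completeness of $K_n^{(k)}$ guarantees $\Omega(n^{k-\ell})$ legal such rewirings, so $A_{f_i}\ge c_1 n^{k-\ell}$ for a constant $c_1=c_1(k,\ell)>0$. As emphasized in Section~\ref{sec:disc}, the content of the~\cite{Hamilton2} proposition is exactly what one needs for \emph{atomicity}: from the new cycle $\tau$ and the label $f_i$, the fixed structural rule of the switch lets one reconstruct the window in $\sigma$ uniquely, hence $\sigma$ itself.

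It then remains to bound the causality neighborhood $\Gamma(f_i)$ and verify the hypothesis of Theorem~\ref{cor:simple}. An arc $f_i\to g=f_j$ can exist only if some action addressing $f_i$ creates, at one of the $O(1)$ positions affected by the window, a pair of adjacent edges sharing a color. Fixing such a position, each candidate new edge $e$ participating in a conflict with its neighbor $e'$ must share an $\ell$-subset with $e'$ and carry the color of one of the (finitely many) edges incident to that $\ell$-subset in the current state; by $(\ell,dn^{k-\ell})$-boundedness the number of target flaws $g$ reachable in this way is at most $c_2 d\,n^{k-\ell}$ for a constant $c_2=c_2(k,\ell)$. Hence
\[
\sum_{g\in\Gamma(f_i)}\frac{1}{A_g}\le\frac{c_2 d}{c_1},
\]
which is strictly less than $1/\mathrm{e}$ once $d=d(k,\ell)$ is chosen sufficiently small. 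Theorem~\ref{cor:simple} then delivers a sink of $D_\pi$, i.e., a properly colored copy of $C_n^{(k)}(\ell)$, within $O(\log|\Omega|+|U(\sigma_1)|)=O(n\log n)$ steps of the uniform walk with high probability.

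The main obstacle is the combinatorial switching step itself: one must exhibit, for every conflicted $\sigma$, a window-based rewiring that (i) always produces a valid $\ell$-overlapping Hamilton cycle, (ii) offers $\Omega(n^{k-\ell})$ distinct outcomes, and (iii) is invertible given only the flaw label---this is exactly atomicity. As the paper notes, these three properties are precisely what the technical proposition of~\cite{Hamilton2} certifies, so the geometric/combinatorial heavy lifting can be quoted; the LLL-style bookkeeping and the choice of the constant $d$ are then routine.
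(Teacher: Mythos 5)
There is a genuine gap, and it sits exactly where you defer to the ``switching scheme''. In a Hamilton cycle every vertex of $K_n^{(k)}$ is already used, so you cannot ``replace a $(k-\ell)$-tuple of swing vertices with a new $(k-\ell)$-tuple drawn from $V$'' while touching only a constant-size window: the incoming vertices must be extracted from elsewhere on the cycle, and the cycle must be re-spliced there as well. Consequently a legal switch is not local, your amenability count $A_{f_i}\ge c_1 n^{k-\ell}$ does not correspond to any realizable action set, and your causality accounting becomes inconsistent: with flaws indexed by cyclic positions and a genuinely local rewiring, $\Gamma(f_i)$ would consist of $O(1)$ positions, so the hypothesis of Theorem~\ref{cor:simple} would hold with \emph{no} assumption on the coloring at all --- a red flag, since the statement is false without boundedness (color every edge identically). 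In your sketch the $(\ell,dn^{k-\ell})$-boundedness never actually enters a well-defined count of flaws, because ``a new edge in conflict with its neighbor'' is not one of your flaws (those are positions, not edge pairs). Two further problems: (i) your flaw set only forbids equal colors on \emph{consecutive} edges $e_i^\sigma,e_{i+1}^\sigma$, but when $\ell>k/2$ non-consecutive edges of an $\ell$-overlapping cycle also intersect, so a ``flawless'' state need not be properly colored; (ii) cyclic positions are not canonical labels of subsets of $\Omega$, so the flaws themselves are not well defined without an anchoring convention that the switches would then scramble.

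The paper's proof has the same high-level shape but resolves precisely these points. It takes $\Omega$ to be the set of \emph{tight} Hamilton cycles $C_n^{(k)}(k-1)$, indexes flaws by same-colored edge pairs $\{e,f\}$ with $1\le|e\cap f|\le\ell$ (passing to the $\ell$-overlapping cycle only at the end via Remark~\ref{too_funny}), and takes the actions $A(F_{e,f},C)=\mathcal{S}_C$ from Proposition~\ref{TheWholePoint}, invoked with the exclusion set $X_{e,f}$. That proposition is what certifies the non-local switch: it yields amenabilities $\delta n^{2k-\alpha-1}$ (respectively $\delta n^{2k-2}$), reflecting the two regions of the cycle a switch must modify, it gives atomicity through the disjointness of the family $\{\mathcal{S}_C\}$, and through $X_{e,f}$ it confines $\Gamma(F_{e,f})$ to pairs in $Y_{e,f}$, whose sizes $|Y_{e,f}(\alpha)|\le 2ck^{\ell+1}n^{2k-\alpha-1}$ are where the $(\ell,dn^{k-\ell})$-boundedness finally enters, matched scale-by-scale in $\alpha$ against the amenabilities so that $\sum_{g\in\Gamma(F_{e,f})}1/A_g<1/\mathrm{e}$ for $d$ small. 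So the heavy lifting you hoped to quote does exist, but its interface is materially different from what your argument assumes, and without restructuring your flaws, actions, and causality counts around it the proof does not go through.
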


We make Theorems~\ref{Rainbow2} and~\ref{Proper2} constructive while also improving the constants from~\cite{Hamilton2}.
\begin{theorem}\label{ourfirstapp}
The Hamilton cycles guaranteed by Theorems~\ref{Rainbow2}, \ref{Proper2} can be found in time $O(n^{4k})$. 
\end{theorem}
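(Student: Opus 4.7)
The plan is to invoke Theorem~\ref{cor:simple} in a uniform way for both the rainbow and the proper-coloring statements. Take $\Omega$ to be the set of all $\ell$-overlapping Hamilton cycles $C_n^{(k)}(\ell)$ in $K_n^{(k)}$; by Remark~\ref{too_funny}, $\Omega$ is non-empty and an initial state $\sigma_1 \in \Omega$ is obtained in $O(n)$ time by reading the appropriate edges off any tight Hamilton cycle on $[n]$. For the rainbow version, introduce one flaw $f_{p_1,p_2}$ for each unordered pair of cycle positions, declaring $\sigma \in f_{p_1,p_2}$ iff the edges of $\sigma$ at positions $p_1$ and $p_2$ share a color; for the proper version, restrict these flaws to pairs of positions that are adjacent in the cycle. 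In either case $|U(\sigma_1)| = O(n^2)$ and $\log_2|\Omega| \le \log_2 n! = O(n\log n)$.

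For each state $\sigma \in f$ I would take $A(f,\sigma)$ to be the family of \emph{switchings} already studied in~\cite{Hamilton1, Hamilton2}: excise a short segment of $\sigma$ around one of the two colliding edges and splice in a new segment of the same overlap pattern on $k-\ell$ fresh vertices. The main technical proposition of~\cite{Hamilton2} asserts precisely what the framework requires of this operation, namely that $|A(f,\sigma)| = \Omega(n^{k-\ell})$ uniformly in $\sigma \in f$ and that the switching is reversible given the flaw-label, i.e., the resulting digraph $D$ is atomic. Hence $A_f \ge c_0\, n^{k-\ell}$ for an absolute constant $c_0$ depending only on $k$ and $\ell$.

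To estimate $|\Gamma(f)|$, observe that a switching modifies only $O(1)$ edges of the cycle, so every potentially caused color-collision pairs one of those $O(1)$ new edges with some other cycle edge. In the proper-coloring setting the second edge must be cyclically adjacent, pinning its position to an $O(1)$ set; combined with the $(\ell, d\, n^{k-\ell})$-boundedness this gives $|\Gamma(f)| = O(d\, n^{k-\ell})$. In the rainbow setting the second edge can be any of the $O(n)$ cycle edges, but the assumed $(\ell, c\, n^{k-\ell})$- and $(0, c\, n^{k-1})$-boundedness together limit the number of flaws a switching can trigger to $O(c\, n^{k-\ell})$. In either case $\sum_{g \in \Gamma(f)} 1/A_g = O(c)$ (respectively $O(d)$), so picking $c,d$ small enough verifies the hypothesis of Theorem~\ref{cor:simple} and forces the uniform random walk on $D_\pi$ to reach a sink, i.e., a rainbow or properly colored copy of $C_n^{(k)}(\ell)$. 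Counting causality directly, instead of through the lopsided-LLL bookkeeping used in~\cite{Hamilton2}, is what should afford the claimed improvement of the constants.

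For the running time, Theorem~\ref{cor:simple} yields termination in $(T_0+s)/\delta = O(n^2 + s)$ steps with probability at least $1-2^{-s}$; each step locates the greatest present flaw (maintainable in an ordered data structure of size $O(n^2)$), samples uniformly from the $\Theta(n^{k-\ell})$ valid switchings, and refreshes the list of present flaws in $O(n^{k})$ time. A crude product of these estimates fits comfortably inside the stated $O(n^{4k})$ budget. The \textbf{main obstacle} is establishing atomicity of the switching operation; but this is exactly the content of a proposition already proved in~\cite{Hamilton2} for an entirely non-algorithmic purpose, so the proof of Theorem~\ref{ourfirstapp} reduces to invoking that proposition alongside Theorem~\ref{cor:simple}.
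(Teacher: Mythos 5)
Your high-level plan, defining flaws for monochromatic collisions on the cycle, using switchings from~\cite{Hamilton1,Hamilton2} as actions, and invoking Theorem~\ref{cor:simple}, is indeed the paper's strategy, but two of your concrete choices create genuine gaps. The first is indexing flaws by \emph{cycle positions}: this disconnects the causality computation from the coloring hypotheses. The potential causality digraph is a worst case over all states and all actions; an arc $f_{p_1,p_2}\to f_{q_1,q_2}$ exists as soon as even one transition can create a collision at positions $q_1,q_2$, however atypical, so the boundedness constants $c,d$ have no mechanism through which to enter a bound on $|\Gamma(f)|$ or on $\sum_{g\in\Gamma(f)}1/A_g$. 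Your step ``combined with the $(\ell,dn^{k-\ell})$-boundedness this gives $|\Gamma(f)|=O(dn^{k-\ell})$'' conflates the flaws actually introduced by one concrete step with the worst-case neighborhood $\Gamma(f)$, and it is not even the natural count for positional flaws: in the proper case a caused flaw must involve one of the $O(1)$ modified positions and an adjacent one, giving $|\Gamma(f)|=O(1)$ and a condition independent of $d$, which would ``prove'' Theorem~\ref{Proper2} for arbitrary $d$ --- false (color all edges alike). The inconsistency is absorbed by atomicity, which for position-indexed flaws is neither automatic nor established: atomicity forces the sets $A(f,\sigma)$, $\sigma\in f$, to be pairwise disjoint, hence $A_f\le|\Omega|/|f|$, so a flaw present in almost every state cannot have large amenability. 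The paper avoids all of this by indexing flaws by the same-colored edge pairs $\{e,f\}\in M$ themselves and by using actions guaranteed (Proposition~\ref{TheWholePoint} applied with $X=X_{e,f}$) to avoid every pair of $M$ disjoint from $e\cup f$, so that $\Gamma(F_{e,f})$ only contains flaws $F_{g,h}$ with $\{g,h\}\in Y_{e,f}$; it is exactly there that the $(\ell,cn^{k-\ell})$- and $(0,cn^{k-1})$-bounds enter, via $|Y_{e,f}(\alpha)|\le 2ck^{\ell+1}n^{2k-\alpha-1}$ (resp.\ $2ckn^{2k-2}$), yielding $\sum 1/A_{g,h}<1/\mathrm{e}$ for small $c$.

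The second gap concerns the actions themselves. A Hamilton cycle uses every vertex, so one cannot ``splice in a segment on $k-\ell$ fresh vertices'' without simultaneously modifying the cycle where those vertices currently sit; the operation as described is not a map into $\Omega$, and the amenability $\Theta(n^{k-\ell})$ is not what the literature provides. The proposition the paper actually uses (Proposition~\ref{TheWholePoint}, synthesized from~\cite{Hamilton1}, stated for \emph{tight} cycles, with the $\ell$-overlapping cycle extracted only at the end via Remark~\ref{too_funny}) yields, for a flaw $F_{e,f}$ with $|e\cap f|=\alpha$, a \emph{disjoint} family $\{\mathcal{S}_C\}$ of action sets of size at least $\delta n^{2k-2}$ ($\alpha=0$) or $\delta n^{2k-\alpha-1}$ ($1\le\alpha\le\ell$), each avoiding the prescribed set $X$ and satisfying $\mathcal{S}_C\cap\mathcal{C}_{e,f}(X)=\emptyset$. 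The disjointness of this family is what gives atomicity --- ``reversible given the flaw label'' is meaningful precisely because the label is the edge pair $\{e,f\}$, whereas a position pair plus the final state does not determine the removed edges --- and the $X$-avoidance is what gives the causality containment above. So the proposition you want to lean on does exist, but it is tailored to edge-pair flaws, $X$-avoiding switchings and tight cycles; inserted into your positional, fresh-vertex formulation, neither the amenability lower bound, nor the causality bound, nor atomicity is justified.
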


\subsection{Proof of Theorem~\ref{ourfirstapp}} 

We will use the following proposition, which can be derived easily by synthesizing results from~\cite{Hamilton1}.

\begin{proposition}\label{TheWholePoint} 
Fix $1 \le \ell < k$. Let $\{e,f\}$ be any pair of edges of $K_{n}^{(k)}$ with $|e \cap f| = \alpha \le \ell$. Let $X$ be any set of pairs $\{g,h\}$ of edges of $K_{n}^{(k)}$ satisfying $(e \cup f) \cap (g \cup h)  = \emptyset $. 
\begin{itemize}
\item 
Let $\mathcal{C}(X)$ be the set of all copies $C$ of $C_{n}^{(k)}(k-1)$ in $K_{n}^{(k)}$ such that $\{g,h\} \nsubseteq C$ for all $\{g,h\} \in X$.
\item 
Let $\mathcal{C}_{e,f}(X) = \{C \in \mathcal{C}(X) : \{e,f\} \subset C \}$.
\end{itemize}

There is $\delta = \delta(k, \ell) >0$ such that if $\mathcal{C}_{e,f}(X) \ne \emptyset$, one can find a disjoint family $\{ \mathcal{S}_{C} : C \in \mathcal{C}_{e,f}(X) \}$ of sets of copies of $C_{n}^{(k)}(k-1)$ from $\mathcal{C}(X)$ (indexed by the copies $C \in \mathcal{C}_{e,f}(X)$) such that for all $C \in \mathcal{C}_{e,f}(X)$:
\begin{enumerate}
\item\label{mikos}   
$\mathcal{S}_{C} \cap \mathcal{C}_{e,f}(X) = \emptyset$.
\item\label{otinane}
$| \mathcal{S}_{C}| \ge \delta n^{2k- 2}$, if $\alpha  = 0$.
\item\label{tothirio}
$| \mathcal{S}_{C}| \ge \delta n^{2k- \alpha -1}$, if $1 \le \alpha \le \ell$.
\end{enumerate}
Furthermore, a uniformly random element of each set $\mathcal{S}_{C}$ can be sampled in time $O(n^{2k})$.
\end{proposition}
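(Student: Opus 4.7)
The plan is a switching (rotation) argument. For each $C \in \mathcal{C}_{e,f}(X)$, I would construct $\mathcal{S}_C$ as the image of $C$ under an explicit family of local modifications of the tight Hamilton cycle that destroy at least one of $e,f$ while otherwise leaving the cyclic structure intact. Since every pair $\{g,h\} \in X$ is vertex-disjoint from $e \cup f$, a modification confined to the vicinity of $e \cup f$ cannot create any forbidden pair, so $\mathcal{S}_C \subseteq \mathcal{C}(X)$ automatically, and item~\ref{mikos} holds by design of the switch (neither of the two cycles in $\{e,f\}$ survives).

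For the counting bounds in items~\ref{otinane} and~\ref{tothirio}, I would write $C$ as a cyclic vertex sequence and locate the two windows of $k$ consecutive vertices realizing $e$ and $f$. When $\alpha=0$ the windows are vertex-disjoint; the switch deletes a short arc of $C$ adjacent to $e$ and splices in a new arc through a freshly chosen $(2k-2)$-tuple of vertices lying outside $e \cup f$ and outside the constant-size ``forbidden zones'' induced by the pairs in $X$. The admissible tuples number $\Omega(n^{2k-2})$, giving item~\ref{otinane}. When $1 \le \alpha \le \ell$, the two windows overlap in an arc of $\alpha$ rigidly fixed vertices, so only $2k-\alpha-1$ positions remain free, yielding $\Omega(n^{2k-\alpha-1})$ and item~\ref{tothirio}. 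Both estimates can be lifted almost verbatim from the rotation lemmas of~\cite{Hamilton1}; the low-order terms absorbed by $\delta$ come from excluding the $O(n^{2k-\alpha-2})$ tuples blocked by $X$ or by the requirement that $\{e,f\}$ not both re-appear.

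The step I expect to be the main obstacle is enforcing disjointness of the family $\{\mathcal{S}_C\}$. I would design the switch so that from any output $C' \in \mathcal{S}_C$ one can canonically recover $C$: exactly one of $e,f$ survives in $C'$ and acts as a landmark that pins down the location of the inserted arc, so the inverse switch is uniquely determined by reading off the inserted vertices and restoring the deleted arc. Distinct source cycles $C_1 \ne C_2$ then necessarily produce disjoint outputs. Making the ``scar'' genuinely unambiguous, in particular ruling out that some $C' \in \mathcal{S}_{C_1}$ could also be interpreted as arising from a switch applied at a different location in some $C_2$, is the delicate point; I would handle it by fixing a deterministic convention tying the scar's position to the surviving window between $e$ and $f$.

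Uniform sampling from $\mathcal{S}_C$ in time $O(n^{2k})$ is then routine: the switch is parametrized by an $O(k)$-tuple of vertex choices, so the candidate space has size $O(n^{2k})$; each candidate is validated (check that $\{e,f\}$ is not both preserved and that no pair of $X$ is formed) in $\mathrm{poly}(n)$ time; one can either enumerate the space within the stated budget and sample uniformly from the valid candidates, or run rejection sampling, which succeeds with constant probability because $|\mathcal{S}_C|$ is a constant fraction of the relevant sub-parameter-space of size $\Theta(n^{2k-\alpha-1})$.
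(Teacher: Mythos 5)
The paper itself does not prove this proposition: it is imported, with the remark that it ``can be derived easily by synthesizing results from''~\cite{Hamilton1} (and, as Section~\ref{sec:disc} notes, it is essentially a main technical proposition of~\cite{Hamilton2}). Your switching sketch is in the spirit of those rotation arguments, but as written it has two genuine gaps. First, a copy of $C_n^{(k)}(k-1)$ is a \emph{Hamilton} cycle, so every vertex of $K_n^{(k)}$ already lies on $C$; you cannot ``splice in a new arc through a freshly chosen $(2k-2)$-tuple of vertices.'' Any admissible switch is a rearrangement of positions, and the donor sites from which the imported vertices are taken also acquire new edges, far away from $e\cup f$. Second, and consequently, your claim that $\mathcal{S}_C\subseteq\mathcal{C}(X)$ holds ``automatically'' is unjustified: a newly created edge that contains no vertex of $e\cup f$ (a donor-site gap edge, or an edge consisting only of imported vertices and boundary vertices of the modified window) can coincide with one member $g$ of a pair $\{g,h\}\in X$ whose partner $h$ is an untouched edge of $C$ elsewhere; the hypothesis $(e\cup f)\cap(g\cup h)=\emptyset$ gives no protection against this. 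Your fallback of excluding ``constant-size forbidden zones induced by the pairs in $X$'' does not repair it, because $X$ is arbitrary (and in the application has polynomially many pairs), so the excluded choices need not be a lower-order term; one can in fact choose $X$, without removing $C$ from $\mathcal{C}_{e,f}(X)$, so that essentially every output of a naive delete-and-splice switch completes a forbidden pair at a donor site. The proposition holds for \emph{every} admissible $X$ precisely because the constructions in~\cite{Hamilton1,Hamilton2} are engineered so that every edge of $C'\setminus C$ meets $e\cup f$, which makes membership in $\mathcal{C}(X)$ structural rather than a counting matter.

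The remaining ingredients of your plan are reasonable but under-developed at exactly the points where the work lies: the canonical-recovery argument that yields disjointness of $\{\mathcal{S}_C\}$ (equivalently, the atomicity used in Section~\ref{sec:ham}) must also read off the donor sites unambiguously, which interacts with the issue above; and item~\ref{mikos} needs a further check that the destroyed edge among $e,f$ does not reassemble as $k$ consecutive vertices elsewhere after the rearrangement. The exponent bookkeeping ($2k-2$ free positions when $\alpha=0$, $2k-\alpha-1$ when the windows overlap) and the $O(n^{2k})$ sampling claim are fine once a correct switch is fixed, but they cannot be ``lifted almost verbatim'' until the switch is redesigned so that all new edges touch $e\cup f$.
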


\begin{proof}[Constructive Proof of Theorem~\ref{Rainbow2}]
Fix $ 1 \le \ell < k $ and let $\phi$ be a coloring of $K_{n}^{(k)}$. Define the set $M$ to consist of all pairs of edges that have the same color and share at most $\ell$ vertices, i.e.,
\[
M = \{\{e_1, e_2\}: e_1, e_2 \in K_{n}^{(k)}, \phi(e_1) = \phi(e_2) , \text{ and }
| e_1 \cap e_2| \le \ell \} \enspace .
\]

Let $\Omega$ be the set of copies of $C_{n}^{(k)}(k-1)$ in $K_{n}^{(k)}$. For each pair of edges $\{e,f\} \in M$ we define the flaw
\[
F_{e,f} = \{ C \subset K_{n}^{(k)}: C \sim C_{n}^{(k)}(k-1) \text{ and } \{e,f\} \subset C\} \enspace .
\]
That is, $F_{e,f}$ consists of all tight Hamilton cycles that contain both $e$ and $f$ (and are thus improperly colored since $\phi(e) = \phi(f)$). A flawless  $C \in \Omega$ is, thus, a tight Hamilton cycle whose edges have distinct colors. Since $k - \ell$ divides $n$, Remark~\ref{too_funny} implies that any such cycle $C$ contains a rainbow copy of $C_{n}^{(k)}(\ell)$.

Having defined flaws, we now need to define actions for each flaw. To that end, for each pair $\{e,f\} \in M$ and each integer $0 \le \alpha \le \ell$, we define
\[
Y_{e,f}(\alpha) = \{\{e',f'\} \in M: \{e',f'\} \ne \{e,f\}, |e' \cap f'| = \alpha \text{ and } (e \cup f)\cap( e' \cup f') \ne \emptyset \} \enspace .
\]
Let
\[
Y_{e,f} = \bigcup_{ \alpha = 0}^{\ell} Y_{e,f}(\alpha)		\qquad \text{and} \qquad X_{e,f} = M \setminus( Y_{e,f} \cup \{e,f \} ) \enspace .
\]
For each Hamilton cycle $C \in \Omega$, for each pair of edges $\{e,f\} \in M$ such that flaw $F_{e,f}$ is present in $C$, we invoke Proposition~\ref{TheWholePoint} with $e,f$ and $X = X_{e,f}$. Let $\mathcal{S}_C$ be the set of Hamilton cycles guaranteed by Proposition~\ref{TheWholePoint}. We let $A(F_{e,f},C) = \mathcal{S}_{C}$. To lighten notation, we let  $A_{e,f} := A_{F_{e,f}} = \min_{C  \in \Omega} |A(F_{e,f},C)|$. By Proposition~\ref{TheWholePoint} we thus have:
\begin{itemize}
\item 
$D$ is atomic since for each flaw we have a disjoint family of sets of cycles (actions).
\item 
If $|e \cap f| = \alpha$, then
\[
A_{e,f}  \ge  
\begin{cases}
		\delta n^{2k-2} 				& \text{ if } \alpha = 0 , 
        	\cr
       	\delta n^{2k-\alpha-1} 	& \text{ if } 1\le \alpha \le \ell \enspace .
\end{cases}
\]
\item 
If $F_{g,h} \in \Gamma(F_{e,f})$ then $\{g,h\} \in Y_{e,f}$ since $\{g,h\} \notin X_{e,f}$ and $\{g,h\} \ne \{e,f\}$
\end{itemize}

To bound $|Y_{e,f}|$ we use the following fact, established in~\cite{Hamilton2}. For every $c>0$, if $\phi$ is $(\ell, cn^{k-\ell})$-bounded and $(0, cn^{k-1})$-bounded, then there exists $n_0 = n_0(c)$ such that for all $n\ge n_0$,
\[
\max_{\{e,f\} \in M} |Y_{e,f}(\alpha)| 
\le
\begin{cases}
		2ckn^{2k-2} 				& \text{ if } \alpha = 0 , 
        	\cr
       	2ck^{\ell+1}n^{2k-\alpha-1} 	& \text{ if } 1\le \alpha \le \ell \enspace .
\end{cases}
\]

Therefore, if $c = \delta (2\mathrm{e}k(1+\ell k^{\ell}))^{-1}$,  for each pair of edges $\{e,f\} \in M$ with $|e \cap f| = \alpha$, we have
\[
\sum_{F_{g,h} \in \Gamma(F_{e,f})} \frac{1}{A_{g,h}} 
=
\sum_{\alpha=0}^{\ell} \sum_{\{g,h\} \in Y_{e,f}(\alpha)} \frac{1}{A_{g,h}} 
\le 
\frac{2ckn^{2k-2}}{\delta n^{2k-2}} + \sum_{\alpha = 1}^{\ell} \frac{2ck^{\ell+1}n^{2k-\alpha-1}}{ \delta n^{2k - \alpha -1}} 
=
\frac{2ck}{\delta} (1+ \ell k^{\ell}) 
< 
\frac{1}{\mathrm{e}} \enspace ,
\]
where $\mathrm{e}$ is Euler's constant and the last inequality holds for every $\ell \ge 0$. 

By Theorem~\ref{cor:simple}, the uniform random walk on $D$ terminates after $O( |M| + \log | \Omega | ))$ steps with high probability, where $\log_2 |\Omega| \le \log_2 \binom {n} {k} ^n \le nk \log_2 n  $ and $|M| \le n^{2k}$. As in each step  we need $O(n^{2k})$ time to find the greatest flaw and $O(n^{2k})$ time to choose an action for it, we have proven the theorem.

\end{proof}

\begin{proof}[Constructive Proof of Theorem~\ref{Proper2}] 
Fix $1 \le \ell < k$ and let $\phi$ be a coloring of $K_{n}^{(k)}$. The proof goes along the lines of the proof of Theorem~\ref{Rainbow2}, except we slightly modify the definition of the set $M$ from the in that it contains no pair of disjoint edges, i.e., 
\[
M = \{ \{e_1,e_2\}: e_1,e_2 \in K_n^{(k)}, 1 \le | e_1 \cap e_2 | \le \ell , \text{ and } \phi(e_1) = \phi(e_2) \} \enspace .
\]
We change $M$ in this way since a properly colored cycle may contain nonadjacent edges of the same color. As before, for each pair of edges $\{e,f\} \in M$ define the flaw
\[
F_{e,f} = \{ C \subset K_{n}^{(k)}: C \sim C_{n}^{(k)}(k-1) \text{ and } \{e,f\} \subset C\} \enspace .
\]

A flawless $C \in \Omega$ is, thus, a tight Hamilton cycle such that for every pair of its edges $e$ and $f$ with $1 \le | e \cap f | \le \ell$ we have $\phi(e) \ne \phi(f)$. Since $k - \ell$ divides $n$, once again Remark ~\ref{too_funny} implies that any such cycle $C$ contains properly colored copy of $C_{n}^{(k)}(\ell)$. 

We define the sets $Y_{e,f}(\alpha)$ only for $\alpha \in [\ell]$  and consequently $Y_{e,f} = \cup_{a =1}^{\ell} Y_{e,f}(\alpha)$. 
An argument identical to that in the proof of Theorem~\ref{Rainbow2} shows that if $d = \delta( 2\mathrm{e}\ell k^{\ell+1})^{-1}$,  Theorem~\ref{cor:simple} applies.
\end{proof}

\section{A Second Application -  Rainbow Matchings in Complete  Graphs}\label{sec:match}

In an edge-colored graph $G=(V,E)$, say that $S \subseteq E$ is \emph{rainbow} if its elements have distinct colors.
\begin{theorem}\label{Matchings}
For any $C > 2\mathrm{e}$, given any edge-coloring of the complete graph on $2n$ vertices in which each color appears on at most $n/C$ edges a rainbow perfect matching can be found in $O(n^4 \log n)$ time.
\end{theorem}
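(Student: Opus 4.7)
The plan is to instantiate Theorem~\ref{cor:simple}. Take $\Omega$ to be the set of perfect matchings of $K_{2n}$ and, for each unordered pair $\{e,f\}$ of disjoint edges with $\phi(e)=\phi(f)$, introduce the flaw
\[
F_{e,f} \;=\; \{M \in \Omega : \{e,f\} \subseteq M\} \enspace ,
\]
so that a flawless matching is precisely a rainbow perfect matching. The initial state $\sigma_1$ may be any fixed matching; note that $|U(\sigma_1)|\le\binom{n}{2}=O(n^2)$, since the flaws present at $\sigma_1$ correspond to the same-color pairs among the $n$ edges of $\sigma_1$.

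For each flaw $F_{e,f}$ at a matching $M$ with $e=\{a,b\}$ and $f=\{c,d\}$, I would define an atomic action set of size $\Theta(n^2)$ consisting of canonical local modifications that destroy the pair $\{e,f\}$. A natural candidate is a four-edge swap: sample an unordered pair $\{g_1,g_2\}$ of distinct matching edges in $M\setminus\{e,f\}$ uniformly at random, and replace $\{e,f,g_1,g_2\}$ by a new perfect matching on the eight involved vertices in which each of $a,b,c,d$ is paired with a specified vertex of $g_1\cup g_2$ according to a fixed lexicographic rule. This gives $|A(F_{e,f},M)|=\binom{n-2}{2}=\Theta(n^2)$, and every action removes $e$ (and $f$) from the matching, so the flaw is addressed. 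Atomicity is the crucial design point and is enforced by the lexicographic rule: given $(\tau,F_{e,f})$, the partners of $a,b,c,d$ in $\tau$ identify the four ``additional'' swap vertices, and the rule then uniquely recovers $(g_1,g_2)$ and hence the pre-state $M=\tau\setminus(\text{four new edges})\cup\{e,f,g_1,g_2\}$.

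To bound the out-neighborhood, observe that any flaw $F_{g,h}\in\Gamma(F_{e,f})$ must have $g$ incident to one of $\{a,b,c,d\}$, so $g$ is one of $O(n)$ possible edges; the color bound then gives at most $n/C$ same-colored partners $h$ per such $g$, yielding $|\Gamma(F_{e,f})|=O(n^2/C)$. Consequently
\[
\sum_{F_{g,h}\in\Gamma(F_{e,f})}\frac{1}{A_{g,h}} \;=\; O\!\left(\frac{n^2/C}{n^2}\right) \;=\; O\!\left(\frac{1}{C}\right) \;<\; \frac{1}{\mathrm{e}} \enspace ,
\]
once $C>2\mathrm{e}$ and the constants in the canonical swap are tuned so that the leading coefficient in $|\Gamma|/A$ is below $1/(2\mathrm{e})$. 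Theorem~\ref{cor:simple} then gives the running-time conclusion: with $\log_2|\Omega|=O(n\log n)$, $|U(\sigma_1)|=O(n^2)$, $\delta=\Theta(1)$, and $O(n^2)$ time per step to locate the greatest violated pair and sample a swap, the walk reaches a rainbow perfect matching in $O(n^4\log n)$ steps with high probability.

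The main obstacle is the atomic action design: the canonical swap must simultaneously destroy both $e$ and $f$, offer $\Theta(n^2)$ uniform choices, admit unique inversion from $(\tau,F_{e,f})$, and restrict the set of potential new edges to vertices incident to $\{a,b,c,d\}$, so that the constant in $|\Gamma|/A$ falls below $1/(2\mathrm{e})$; everything else is routine bookkeeping against Theorem~\ref{cor:simple}.
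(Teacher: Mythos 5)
Your overall framing (same $\Omega$, same flaws, invoking Theorem~\ref{cor:simple}, and the cost accounting) matches the paper, but your action design has a genuine quantitative gap that the ``tuning of constants'' you defer to cannot close. With an unordered pair $\{g_1,g_2\}$ of matching edges and a fixed pairing rule you get only $A_{e,f}=\binom{n-2}{2}\approx n^2/2$ actions, while the set of new edges your actions can create still has size $\Theta(n)$ for each vertex of $\{a,b,c,d\}$: whatever the rule, the set of possible partners of $a$ must contain an endpoint of all but at most one edge of $M\setminus\{e,f\}$ (otherwise there would be two untouched edges $g_1,g_2$ for which the rule has no legal partner for $a$), so there are at least about $4(n-3)$ potential new edges, each of which may share its color with up to $q-1\le n/C-1$ other edges. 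Hence in the worst case
\[
\sum_{F_{g,h}\in\Gamma(F_{e,f})}\frac{1}{A_{g,h}} \;\gtrsim\; \frac{4(n-3)(q-1)}{\binom{n-2}{2}} \;\approx\; \frac{8}{C}\enspace,
\]
and with your stated count of roughly $4(2n-4)$ candidate new edges it is $\approx 16/C$; so your condition only becomes $<1/\mathrm{e}$ around $C>8\mathrm{e}$ (respectively $16\mathrm{e}$), not the claimed $C>2\mathrm{e}$. The paper avoids this by making the re-pairing itself part of the random choice: its {\sc Switch} picks an \emph{ordered} pair of vertices $u_1,u_3$ outside the four flaw vertices, giving $A_{e,f}=(2n-4)(2n-6)\approx 4n^2$ actions (eight times your count) while the possible new edges remain at most $4(2n-4)$, so the ratio is $2(q-1)/(n-3)\approx 2/C<1/\mathrm{e}$ exactly when $C>2\mathrm{e}$ and $n$ is large. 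The missing idea, in short, is to enlarge the action set by treating each of the eight ways to re-pair $\{a,b,c,d\}$ with the two chosen edges as a distinct action, rather than hoping to shrink the neighborhood.

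A secondary caveat concerns atomicity: it is not automatic for ``a fixed lexicographic rule.'' If the rule depends only on the four outside vertices and not on how they were paired inside $M$, then the (up to three) ways those four vertices could have been matched in $M$ all produce the same $\tau$ under the same flaw label, so the map is not invertible. The new matching must encode the old pairing, as in the paper, where adding $e$ and $f$ back to $\tau$ yields two paths whose closing $4$-cycles determine $M$ uniquely. This part is fixable within your scheme (e.g., pair $a,b$ with the two endpoints of the lexicographically smaller edge), but the constant-factor issue above is not.
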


\begin{proof}
Let $\phi$ be any edge-coloring of $K_{2n}$ in which each color appears on at most $q$ edges. Let $P=P(\phi)$ be the set of all pairs of vertex-disjoint edges with the same color in $\phi$, i.e., $P = \{ \{e_1,e_2 \}: \phi(e_1) = \phi(e_2) \}$. 
Let $\Omega$ be the set of all perfect matchings of $K_{2n}$. For each $\{e_i,e_j\} \in P$ let
\[
f_{i,j} = \{ M \in \Omega:  \{e_i,e_j\} \subset M\} \enspace .
\]
Thus, an element of $\Omega$ is flawless iff it is a rainbow perfect matching. 

To address the flaw induced by edges $e_i,e_j \in M$ we select two other edges $e_k, e_{\ell} \in M$ and in each of the two edge-pairs $\{e_i,e_k\}$ and $\{e_j,e_{\ell}\}$ we select one of the other two matchings. More precisely, let $f=\{\{v_1,v_2\},\{v_3,v_4\}\} \in P$ and assume, without loss of generality,  that $v_1>v_2$, $v_3>v_4$, and $v_1 > v_3$. For $M \in f$, the set $A(f,M)$ consists of all possible outputs of {\sc{Switch}}$(M,\{v_1,v_2\},\{v_3,v_4\})$.

\begin{algorithm}[H] 
\caption{ {\sc{Switch}}$(M,\{v_1,v_2\},\{v_3,v_4\})$}
\begin{algorithmic}[1]

\State Let $u_1$ be any vertex other than $\{v_1,v_2,v_3,v_4\}$. \label{v1part}

\State Let $u_2$ be the vertex such that $(u_1,u_2) \in M$.

\State Let $u_3$ be any vertex other than $\{v_1,v_2,v_3,v_4,u_1,u_2\}$.\label{v3part}

\State Let $u_4$ be the vertex such that $(u_3,u_4) \in M$.

\State Output $M' \in \Omega$ by removing from $M$ edges $(v_1,v_2)$, $(u_1,u_2)$, $(v_3,v_4)$, $(u_3,u_4)$, and adding edges $(v_1,u_1)$, $(v_2,u_2)$, $(v_3,u_3)$,$(v_4,u_4)$.  
\end{algorithmic}
\end{algorithm}

Enumerating the choices in Steps~\ref{v1part} and~\ref{v3part} we see that $|A(f,M)| = (2n-4)(2n-6)$. On the other hand, each of the four edges inserted by each action of each set $A(f,\cdot)$ has exactly one vertex in $V = \{v_1,\ldots,v_4\}$ and one vertex outside $V$, and can only form a flaw along with an edge having the same color as itself. Therefore, $|\Gamma (f_{i,j})| \le 4 (2n-4)(q-1)$ for every $i,j$. 

Let $D$ be the directed graph on $\Omega$ corresponding to the above sets of actions. To prove atomicity consider any arc $M \xrightarrow{f} M'$. Adding to $M'$ the two edges defining $f$ yields two edge-disjoint paths of length 3. Considering the 4-cycle closing each path, $M$ results by taking in each cycle the 2-matching containing the edges of the flaw. Let $A_{i,j} := A_{f_{i,j}} = \min_{M \in f} | A(f,M)|$. To conclude we note that for every $C>2\mathrm{e}$, there exists $\delta = \delta(C)$, such that for all $n \ge n_0(C)$,
\[
\sum_{f_{k,\ell} \in \Gamma(f_{i,j})} \frac{1}{A_{k,\ell}} 
\le\frac{4(2n-4)(q-1)}{(2n-4)(2n-6)} = \frac{2(q-1)}{n-3} \le
\frac{1}{\mathrm{e}} - \delta \enspace .
\]

To bound the running time let $q_i$ be the number of edges with color $i$. Trivially, $|P| \le \sum_i \binom{q_i}{2}<n^4$, and it is not hard to see that in fact $|P| < n^3$. At the same time, $|\Omega| = (2n-1)!!$ implying $\log_2|\Omega| = O(n \log n)$. By  Theorem~\ref{cor:simple}, the uniform random walk on $D$ terminates after $O(|P| + \log_2 |\Omega|)$ steps with high probability. By fixing an arbitrary ordering of the colors and sorting the edges in the current matching according to color after each step we can trivially find the greatest flaw present in time $O(n \log n)$ (flaws are ordered first by color and then by the lexicographic order of the four vertices involved) .
\end{proof}

\section{Color-Blind index of Graphs - When Directed Causality Matters} \label{sec:cb}

\subsection{Preliminaries}

Let $\phi: E(G) \rightarrow [k]$ be an edge-coloring, not necessarily proper, of a graph $G(V,E)$. For each vertex $v \in V$, let $c(v) = (a_1, \ldots, a_k)$, where $a_i = |\{u: \{u,v\} \in E, c(\{u,v\}) = i|$, for $i \in [k]$. We say that $\phi$ \emph{distinguishes neighbors by multisets} if $c$ is a proper vertex-coloring of $V$ and denote by $\mathrm{ndi}_\mathrm{m}(G)$ the smallest  $k$ for which such $c$ exists. Clearly, if $G$ contains $K_2$ as a connected component, no such edge-coloring exists. Addario-Berry et al.~\cite{AddarioBerry} proved that as long as that is not the case,  $\mathrm{ndi}_\mathrm{m}(G) \le 4$.

Kalinowski et al.~\cite{blind} introduced a fascinating twist to the above concept that captures color-blindness. A color-blind person looking at two green edges and one red edge sees two edges of the same color and one edge of another color. And their view would remain the same if, instead, we had two red edges and one green. If we re-order the sequence $c(v) = ( a_1, a_2, \ldots, a_k)$ non-decreasingly, we obtain a sequence $p(v) = (d_1, \ldots, d_k)$, called the \emph{palette} of vertex $v$. (Thus, there is a bijection between the set of all possible palettes of a vertex $v$ of degree $d$ and the set of all partitions of the integer $d$ into at most $k$ parts). We say that a \emph{color-blind person can distinguish neighbors}  if $p(u) \ne p(v)$ for every edge $\{u,v\} \in E$, i.e., if $p$ is a proper coloring of the vertices of $G$. The smallest possible number $k$ for which such an edge-coloring exists is called the \emph{color-blind index} of a graph $G$ and is denoted by $\mathrm{dal}(G)$, the notation refering to the English chemist John Dalton who was the first scientist to take academic interest in the subject of color blindness.

It has to be noted that there are infinitely many graphs, e.g., odd cycles, for which the color-blind index is not defined. In~\cite{blind} it was conjectured that there exists a number $K$ such that $\mathrm{dal}(G) \le K$, for every graph $G$ for which $\mathrm{dal}(G)$ is defined. The authors prove this conjecture for complete graphs, regular bipartite graphs, regular graphs of sufficiently large degree, and  graphs with bounded ratio $\Delta(G)/\delta(G)$. 
\begin{theorem}[\cite{blind}]\label{Irregular}
For every $R \ge 1$, there exists $\delta_0=\delta_0(R)$ such that if $\delta(G) \ge \delta_0$ and $\Delta(G) \le R \delta(G)$, then $\mathrm{dal}(G) \le 6$.
\end{theorem}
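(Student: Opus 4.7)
The plan is to apply Theorem~\ref{cor:simple} with a Moser--Tardos-style atomic decomposition of the bad events. Take $\Omega = [6]^{E(G)}$, and for each edge $e = \{u,v\}$ write $N(e) = E(u) \cup E(v)$, so that $|N(e)| = d(u) + d(v) - 1 \ge 2\delta - 1$. Flaws are indexed by pairs $(e,\psi)$ where $\psi \colon N(e) \to [6]$ is an assignment whose induced palettes at $u$ and $v$ coincide; set $f_{e,\psi} = \{\sigma \in \Omega : \sigma|_{N(e)} = \psi\}$. A flawless state is then precisely a $6$-edge-coloring witnessing $\mathrm{dal}(G) \le 6$. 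To address $f_{e,\psi}$ at its unique supporting state, resample each edge in $N(e)$ independently and uniformly from $[6]$; this yields $A_{f_{e,\psi}} = 6^{|N(e)|} \ge 6^{2\delta - 1}$. Atomicity follows from Conditions~\ref{satlike} and~\ref{nospooky} of Section~2, since the flaw pins a specific assignment $\psi$ on $N(e)$ and the action modifies only edges in $N(e)$.

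Next I would bound the causality sum. An arc $f_{e,\psi} \to f_{e',\psi'}$ requires $N(e) \cap N(e') \ne \emptyset$, which forces an endpoint of $e'$ to lie in $\{u,v\} \cup N(u) \cup N(v)$; the number of such edges $e'$ is at most $(2\Delta + 2)\Delta = O(\Delta^2) = O(R^2\delta^2)$. For each such $e' = \{u',v'\}$, the contribution of its bad $\psi'$ collapses nicely:
\[
\sum_{\psi'\ \mathrm{bad}} \frac{1}{A_{f_{e',\psi'}}} \;=\; \frac{\#\{\psi' : p_{\psi'}(u')=p_{\psi'}(v')\}}{6^{|N(e')|}} \;=\; \Pr_{\psi'\sim\mathrm{unif}}\!\bigl[p(u')=p(v')\bigr] \enspace ,
\]
so the whole causality sum is at most $O(\Delta^2) \cdot \max_{e'} \Pr_{\mathrm{unif}}[p(u')=p(v')]$.

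The central technical ingredient is an anti-concentration bound for palettes: for any vertex $w$ of degree $d$, if the edges in $E(w)$ are colored uniformly and independently from $[6]$, then for any fixed sorted tuple $\pi$ one has $\Pr[p(w) = \pi] = O(d^{-5/2})$. This follows from the local CLT for the multinomial$(d;\tfrac16,\ldots,\tfrac16)$ distribution: each fixed unsorted count vector has probability $O(d^{-5/2})$ by Stirling, and at most $6!$ unsorted tuples sort to any given palette. Conditioning on the color of the shared edge $e'$ decouples $E(u')\setminus\{e'\}$ from $E(v')\setminus\{e'\}$, whence $\Pr_{\mathrm{unif}}[p(u')=p(v')] = O(\delta^{-5/2})$. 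Combining, the causality sum is $O(R^2\delta^2 \cdot \delta^{-5/2}) = O(R^2\delta^{-1/2})$, which falls below $1/\mathrm{e}$ once $\delta \ge \delta_0(R) = \Theta(R^4)$. Theorem~\ref{cor:simple} then produces a flawless coloring in $O(\log|\Omega| + |U(\sigma_1)|) = O(|E(G)|)$ steps with high probability, starting from any initial coloring (e.g.\ the all-$1$ coloring).

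The main obstacle I expect is the anti-concentration estimate: the $O(d^{-5/2})$ bound on $\Pr[p(w)=\pi]$ is standard in spirit but requires careful handling of the sorted-versus-unsorted distinction and of the mild dependence between $p(u')$ and $p(v')$ through $\phi(e')$. Once this ingredient is in place, everything else is a mechanical application of Theorem~\ref{cor:simple}, and the framework's tolerance for exponentially many finely-decomposed flaws (its running time depending on $|U(\sigma_1)|$ rather than $|F|$) is precisely what makes the ``obvious'' flaws-and-actions formulation advertised by the section's title go through without friction.
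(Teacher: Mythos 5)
Your proposal is correct and runs on the same engine as the paper's proof (Theorem~\ref{cor:simple} applied to $\Omega=[6]^{E(G)}$, the multinomial estimate $\max\binom{d}{d_1,\ldots,d_6}\,6!/6^d = O(d^{-5/2})$, an $O(\Delta^2)$ causality neighborhood, and $\delta_0=\Theta(R^4)$), but your flaw/action decomposition is genuinely different. You atomize in Moser--Tardos variable style: one flaw per edge $e=\{u,v\}$ and per full bad assignment $\psi$ of $S_u\cup S_v$, with the action resampling all of $S_u\cup S_v$; atomicity is then automatic from Conditions~\ref{satlike}--\ref{nospooky}, and the inner sum $\sum_{\psi'} 1/A_{f_{e',\psi'}}$ literally becomes the uniform collision probability $\Pr[p(u')=p(v')]$, which you bound after decoupling on the shared edge. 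The paper instead defines, for each edge with $d(u)=d(v)$, only $f(d)\approx 6^d/d^{5/2}$ flaws $f^i_{u,v}$, where $i$ indexes a representative $S_v$-coloring inside each equivalence class keyed by the coloring of $S_u\setminus\{u,v\}$, and the action recolors only $S_v$ (so $6^{d(v)}$ actions); atomicity there is a short reconstruction argument (from $\phi'$ and $i$), and the causality neighborhood is confined to edges near $v$, which is what makes the causality digraph visibly directed---the phenomenon the section's title is advertising. Your version trades that economy (fewer flaws, one-star resampling, asymmetric causality) for simplicity: the flaw family is exponentially large, but since at most one $\psi$ per edge is active in any state, $|U(\sigma_1)|\le|E(G)|$, and the framework's insensitivity to $|F|$ absorbs the blow-up, yielding the same $\delta_0=\Theta(R^4)$ and an $O(|E(G)|)$-step walk. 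Two cosmetic slips, neither damaging: a flaw $f_{e,\psi}$ has many supporting states (one per coloring of $E(G)\setminus(S_u\cup S_v)$), not a unique one; and the decoupling step should note that the collision probability is zero when $d(u')\neq d(v')$, so only equal-degree edges contribute.
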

Theorem~\ref{Irregular}, covering both regular and irregular graphs, is proven in~\cite{blind} by applying the lopsided LLL with a \emph{directed} lopsidepency graph. Thus, the result does not fit either the variable framework of~\cite{MT}, or the permutation setting of~\cite{SrinivasanPerm}.

\begin{theorem}
The colorings guaranteed by Theorem~\ref{Irregular} can be found in $O \left(|E(G)|\left( 1 + \Delta(G) - \delta(G) \right) \right)$ time.
\end{theorem}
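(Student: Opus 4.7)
The plan is to apply Theorem~\ref{cor:simple} in the variable framework with $\Omega = [6]^{E(G)}$, taking one variable of domain $[6]$ per edge. For each edge $e=\{u,v\}$ with $\deg(u) = \deg(v)$ and each joint color assignment $\alpha$ to $E(u)\cup E(v)$ for which the induced palettes satisfy $p_\alpha(u) = p_\alpha(v)$, I would introduce a flaw $f_{e,\alpha}$ consisting of all colorings that agree with $\alpha$ on $E(u)\cup E(v)$. No flaws are required when $\deg(u)\neq \deg(v)$ since multisets of unequal length cannot coincide, and hence a sink corresponds precisely to the sort of coloring guaranteed by Theorem~\ref{Irregular}. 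The action addressing $f_{e,\alpha}$ at a state $\sigma\in f_{e,\alpha}$ resamples the colors of the edges in $E(u)\cup E(v)$ independently and uniformly in $[6]$, giving amenability $A_{f_{e,\alpha}} = 6^{|E(u)\cup E(v)|}$. Atomicity is immediate from the two variable-framework criteria: $f_{e,\alpha}$ pins down a unique prior assignment on the modified variables, and the action touches only those variables.

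Next I would verify the hypothesis $\sum_{g\in\Gamma(f)} 1/A_g < 1/\mathrm{e}$. A causality arc $f_{e,\alpha}\to f_{e',\alpha'}$ can exist only when $E(u')\cup E(v')$ meets $E(u)\cup E(v)$, equivalently when $\{u',v'\}\cap (N[u]\cup N[v])\neq\emptyset$. For a fixed such nearby edge $e'=\{u',v'\}$ (which necessarily satisfies $\deg(u')=\deg(v')$), the quantity $\sum_{\alpha'} 1/A_{f_{e',\alpha'}} = \sum_{\alpha'} 6^{-|E(u')\cup E(v')|}$ equals precisely the probability that a uniform random coloring of $E(u')\cup E(v')$ produces $p(u')=p(v')$. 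A multinomial local-limit estimate bounds this by $O(\deg(u')^{-5/2}) = O(\delta(G)^{-5/2})$, and summing over the $O(\Delta(G)^2)$ nearby edges yields $O(\Delta(G)^2/\delta(G)^{5/2}) = O(R^2/\sqrt{\delta(G)})$, which falls below $1/\mathrm{e}$ whenever $\delta(G)\ge \delta_0(R)$ for a suitable $\delta_0$. This is exactly the probabilistic estimate driving the directed lopsidependency LLL of~\cite{blind}. The main obstacle is to confirm that our pessimistic causality neighborhood $\Gamma(f)$ is no larger than their directed lopsidependency neighborhood; since every arc in $C$ arises from a genuine variable overlap forced by our action, this check is routine, and it is precisely because the neighborhood is \emph{directed} — recorded only between pairs whose variable-sets actually share an edge modified at $u$ or $v$ — that the argument works, as emphasized in the preamble to this section.

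For the running time, Theorem~\ref{cor:simple} produces a sink within $(T_0+s)/\delta_{\mathrm{Thm}}$ resamplings with probability at least $1-2^{-s}$, where $T_0 = O(\log_2|\Omega| + |U(\sigma_1)|) = O(|E(G)|)$ (each edge hosts at most one present flaw in any given state) and $\delta_{\mathrm{Thm}} = \Theta(1)$. To convert these $O(|E(G)|)$ iterations into the claimed $O(|E(G)|(1+\Delta(G)-\delta(G)))$ wall-clock bound, I would maintain a bucketed list of currently flawed edges and rebuild entries only for endpoints whose palette actually changed. A resampling at $\{u,v\}$ affects palettes only at vertices in $N[u]\cup N[v]$, but a new flaw can only appear at a degree-balanced edge $\{u',v'\}$ with $\deg(u')=\deg(v')$; in the near-regular regime $\Delta(G)\approx\delta(G)$ the number of such degree-matched neighbors per resampled vertex is $O(1+\Delta(G)-\delta(G))$, giving amortized per-step cost of that same order and the stated total running time.
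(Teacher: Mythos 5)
Your flaw/action design is a genuinely different route from the paper's. The paper orients each degree-balanced edge ($u<v$ with $d(u)=d(v)$), lets an action recolor \emph{only} $S_v$, and defines only $f(d)\le 1572\cdot 6^d/d^{5/2}$ flaws per edge via the equivalence-class/index construction, so that atomicity comes from recovering the old coloring of $S_v$ from the index $i$, and addressing a flaw introduces only flaws on edges of $\bigcup_{e \in S_v} S_e$. Your design is the symmetric, variable-setting one: one flaw per violating joint assignment of $E(u)\cup E(v)$, resampled as a block. That part of your proposal is sound: atomicity follows from the two variable-setting conditions, at most one flaw per edge is present in any state so $|U(\sigma_1)|\le |E(G)|$, and for each of the $O(\Delta(G)^2)$ edges $e'$ whose star meets $E(u)\cup E(v)$ the quantity $\sum_{\alpha'} 6^{-|E(u')\cup E(v')|}$ is exactly the palette-collision probability, which is $O(\delta(G)^{-5/2})$ — the same multinomial estimate as the paper's bound on $f(d)/6^d$ — so the hypothesis of Theorem~\ref{cor:simple} holds for $\delta(G)\ge\delta_0(R)$ and the walk reaches a sink in $O(|E(G)|)$ steps with high probability, matching the paper's step count.

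The genuine gap is the conversion of steps into the claimed running time. Your key assertion — that a resampling at $\{u,v\}$ meets only $O(1+\Delta(G)-\delta(G))$ ``degree-matched'' neighbors per resampled vertex — is false: degree-matched means $\deg(u')=\deg(v')$ \emph{exactly}, which has nothing to do with the spread $\Delta(G)-\delta(G)$. In a $d$-regular graph $\Delta(G)-\delta(G)=0$, yet every edge is degree-balanced; moreover each of your steps resamples $|E(u)\cup E(v)|=\Theta(\Delta(G))$ edge colors, must update the palettes of $\Theta(\Delta(G))$ vertices, and must test up to $\Theta(\Delta(G)^2)$ nearby degree-balanced edges for newly created flaws, so the per-step cost cannot be $O(1)$ as your amortized accounting would require. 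As written, your bookkeeping yields something like $O(|E(G)|\cdot\Delta(G)^2)$ (or $O(|E(G)|\Delta(G))$ with more careful data structures), not $O(|E(G)|(1+\Delta(G)-\delta(G)))$; note also that the paper's design recolors only one star $S_v$ per step, which your both-stars resampling makes strictly worse. So the existence and step-count parts of your argument stand, but the stated time bound is not established by your proposal.
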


\subsection{Proof}

Let $\Omega$ be the set of all edge-colorings, not necessarily valid, of $G(V,E)$ with 6 colors. Let $S_v$ denote the set of edges incident to a vertex $v \in V$, and for $\{u,v\} \in E$, let $S_{u,v} = S_u \cup S_v$. Fix an arbitrary ordering of $V$. For each edge $\{u,v\}$ where $u < v$ and $d(u) = d(v)$, we define a set of flaws as follows.

Let $\mathrm{Bad}(u,v)$ be the set of all edge colorings of $G$ with $p(u) = p(v)$. Partition $\mathrm{Bad}(u,v)$ into equivalence classes, forming a partition $P$, where two colorings are equivalent if they agree on the coloring of $S_u \setminus \{u,v \}$. Further partition each class $C \in P$ into equivalence classes, forming a partition $Q(C)$,  where two colorings in $C$ belong in the same equivalence class, if they agree on the coloring of $S_v$.

We claim that if  $d(u) = d(v) = d$, then for each $C \in P$, the size of $Q(C)$, i.e the number of equivalence classes in $Q(C)$, is at most 
 \begin{equation}\label{even_funnier}
f(d):= \max_{d_1+\cdots+d_{6} = d} \binom{d}{d_1,d_2,\ldots,d_6} 6!\enspace .
\end{equation}
To see this observe that for any coloring of $S_u \setminus \{u,v\}$ there exist numbers $d_1 \ge d_2 \ge \cdots \ge d_6$ summing to $d$ such that $p(u) = p(v)$ implies $p(v)=(d_1,d_2,\ldots,d_6)$. Therefore, the number of elements in $Q(C)$ is bounded by the number ways to partition the $d$ edges in $S_v$ into sets of sizes $d_1,\ldots,d_6$ times the 6! ways of assigning distinct colors to the sets. Finally, it is not hard to see that~\eqref{even_funnier} is maximized when $|d_i-d_j|\le 1$ for all $i,j\in [6]$, implying 
\begin{equation}\label{etoimo}
f(d)= \max_{d_1+\cdots+d_{6} = d} \binom{d}{d_1,d_2,\ldots,d_6} 6! \le 6^d \frac{27  \sqrt{2}}{(\pi d)^{5/2}}  6!  < 1572 \frac{6^d}{d^{5/2}}\enspace .
\end{equation}

For each $C \in P$, consider an arbitrary ordering for the members of $Q(C)$ and let $Z_i(C)$, $i \in [f(d)]$, be the $i$-th member of $Q(C)$. For each $i \in [f(d)]$ define the flaw $f_{u,v}^{i}$ as :

\begin{align*}
f_{u,v}^{i} = \bigcup_{C \in P} Z_i(C)
\end{align*}
Thus, a flawless element is an element where there is no edge $(u,v)$ such that $p(u) = p(v)$, as we wanted.
For each flaw $f_{u,v}^{i}$, where $u < v$, for each $\phi \in f_{u,v}^{i}$, the set of actions $A(f_{u,v}^{i},\phi) \subseteq \Omega$ consists of all possible recolorings of $S_v$ in $\phi$. Thus, $|A(f_{u,v}^{i},\phi)| = 6^d$, for all $u,v,i$. 
 
Let $D$ be the directed graph on $\Omega$ corresponding to these actions. To establish the atomicity of $D$ it suffices to show that for every transition $\phi \xrightarrow{f_{u,v}^{i}} \phi'$, where $u < v$, if we are given $\phi'$ and $f_{u,v}^{i}$, we can reconstruct $\phi$. To see this, at first notice that $\phi$ and $\phi'$ differ only in the coloring of $S_v$. Therefore, $\phi'$ implies the coloring $C$ of $S_u \setminus \{u,v\}$ in $\phi$, while the integer $i$ implies $Z_i(C)$ and therefore the coloring of $S_v$.

Fix $\{u,v\} \in E$ with $u < v$ and $d(u) = d(v)$ and let  $M(u,v) = \bigcup_{e \in S_v} S_e$. That is, $M(u,v)$ is the set of edges that are adjacent to $u$ or $v$, or to edges adjacent to $v$. Observe that any action taken at a state $\phi$ to address flaw $f^{i}_{u,v}$ only introduces flaws that are associated with edges in $M(u,v)$.  To see this, notice that when we recolor an edge we only introduce flaws associated with edges adjacent to it.

Recall by our discussion above that for any edge whose endpoints have degree $d$, the number of flaws associated with it
is bounded by $f(d)$ and that for any such flaw $g$, we have $A_g := \min_{\phi \in g} |A(g,\phi)| = 6^d$. Therefore, for any such flaw $f_{u,v}^i$ (and, thus, for every flaw),
\begin{equation}
\sum_{g \in \Gamma(f_{u,v}^{i})} \frac{1}{A_{g}}  \le      \sum_{e \in M(u,v)  }  \frac{f(d_e)}{6^{d_e} } <   |M(u,v)|   \cdot \frac{1572}{\delta(G)^{5/2}} \le 1572 \frac{\Delta(G)^2 }{ \delta(G)^{5/2} } \enspace . \label{teleutaia}
\end{equation}

From~\eqref{teleutaia} we see that if $\delta(G) > \delta_0 = (1572 \mathrm{e} R^2 )^{{2}}$, then the condition of Theorem~\ref{cor:simple} holds. Regarding the running time, we note that $\log_2 |\Omega| =  \log_2 6^{|E|} = O ( |E|)$ while, clearly, $|U(\sigma_1)| \le |E|$ since in each state each edge can ``give rise" to at most one flaw.

\section{ Latin Transversals - A case of Dense  Neighborhoods}\label{sec:latin}

Let $M$ be an $n \times n$ matrix whose entries come from a set of colors $C$. A \emph{Latin Transversal} of $M$ is a permutation $ \pi \in S_n$ such that the entries $\{M(i,\pi(i))\}_{i=1}^n$ have distinct colors, i.e., a selection of $n$ entries in distinct rows and columns such that no two elements have the same color.

\begin{theorem}
If each color $c \in C$ appears at most $\Delta \le \frac{27}{256} n$ times in $M$, then the Recursive Walk will find a Latin Transversal of $M$ in $O( n \log n)$ steps with high probability.
\end{theorem}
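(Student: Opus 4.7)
My plan is to apply Theorem~\ref{olala} (the Recursive Walk) in a permutation setting, leveraging the independent-set restriction to recover the constant $27/256$ of Bissacot et al.~\cite{bissacot2011improvement}.

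Take $\Omega = S_n$. For each unordered pair of cells $c_1=(i_1,j_1)$, $c_2=(i_2,j_2)$ with $i_1\neq i_2$, $j_1\neq j_2$, and $M(c_1)=M(c_2)$, define the flaw
\[
f_{c_1,c_2}=\{\pi\in S_n:\pi(i_1)=j_1 \text{ and } \pi(i_2)=j_2\}.
\]
Flawless states are precisely Latin Transversals. To address $f_{c_1,c_2}$ at $\pi$, I draw $k\in[n]\setminus\{i_1,i_2\}$ uniformly at random and move to $\pi\circ(i_1\;k)$, the permutation obtained by transposing the values at positions $i_1$ and $k$. Thus $A_f=n-2$. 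Atomicity is immediate: given the target state $\tau$ and the flaw-label $f_{c_1,c_2}$, the index $k$ is forced by $\tau(k)=j_1$, so the pre-transition state is reconstructed unambiguously as $\tau\circ(i_1\;k)$.

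The key structural observations are about the causality digraph and its undirected subgraph $G$. A single step only alters the cells $(i_1,\tau(k))$ and $(k,j_1)$, so any flaw caused by addressing $f_{c_1,c_2}$ must include a cell in row $i_1$ or column $j_1$. Since each color appears at most $\Delta$ times, each individual cell participates in at most $\Delta-1$ flaws, whence $|\Gamma(f)|\le 2n(\Delta-1)$. Crucially, two flaws in $\Gamma(f)$ that both touch row $i_1$ (resp.\ column $j_1$) are mutually causal---each can be introduced while addressing the other via a suitable swap---and hence form a clique in $G$. Consequently every $S\in\Indep(\Gamma(f))$ selects only a constant number of flaws per row/column part. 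Choosing uniform weights $\mu_f=\mu$, the sum $\sum_{S\in\Indep(\Gamma(f))}\mu^{|S|}$ is dominated by a product of the form $\bigl(1+(\Delta-1)\mu\bigr)^{O(1)}$, and the criterion of Theorem~\ref{olala} reduces to a scalar optimization that is feasible precisely when $\Delta\le 27n/256$ (the optimum occurring at $\mu=\Theta(1/n)$).

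For the running time, Theorem~\ref{olala} gives $(T_0+s)/\delta$ steps with probability $1-2^{-s}$. Here $\log_2|\Omega|=\log_2(n!)=O(n\log n)$, and the largest independent subset of $U(\sigma_1)$ has size at most $\lfloor n/2\rfloor$, since an $\Indep$-set of flaws present at $\sigma_1$ must use each of the $n$ occupied cells at most once. Hence $T_0=O(n\log n)$, $\delta=\Theta(1)$, and choosing $s=\Theta(\log n)$ yields $O(n\log n)$ steps with high probability. The main obstacle I anticipate is the careful enumeration of $\Indep(\Gamma(f))$: one must partition $\Gamma(f)$ into the correct clique parts (row $i_1$-touching via first vs.\ second cell, and similarly for column $j_1$), verify each is a clique in $G$, and tune $\mu$ to extract the precise $27/256$ constant rather than a weaker bound---mirroring the optimization carried out in~\cite{bissacot2011improvement}.
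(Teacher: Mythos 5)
There is a genuine gap, and it is quantitative, not cosmetic: your choice of action is too weak to reach $\Delta=\Theta(n)$. With a single transposition you have $A_f=n-2$, while the flaws potentially caused by addressing $f_{c_1,c_2}$ are all those touching row $i_1$ or column $j_1$, i.e.\ $|\Gamma(f)|$ can be as large as $2n(\Delta-1)$. Even granting your most optimistic structure --- $\Gamma(f)$ a union of two cliques of size at most $n(\Delta-1)$, so that $\sum_{S\in\Indep(\Gamma(f))}\mu^{|S|}\le\bigl(1+\mu n(\Delta-1)\bigr)^{2}$ --- the condition of Theorem~\ref{olala} reads $\frac{1}{\mu(n-2)}\bigl(1+\mu n(\Delta-1)\bigr)^{2}<1$, and since $\min_{x>0}(1+x)^{2}/x=4$, this forces $4n(\Delta-1)<n-2$, i.e.\ $\Delta=O(1)$. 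No tuning of $\mu$ can rescue this: the repertoire $A_f$ grows only linearly in $n$ while each clique has size $\Theta(n\Delta)$, so the "scalar optimization" you defer to is infeasible for any $\Delta$ growing with $n$, let alone $\Delta\le 27n/256$. The paper's proof gets the quadratic repertoire it needs by resampling \emph{two} additional positions (a double switch replacing $(i,j),(i',j'),(\alpha,\beta),(\alpha',\beta')$ by $(i,\beta),(i',\beta'),(\alpha,j),(\alpha',j')$), giving $A_f=n(n-1)$, and shows $\Gamma(f)$ is a union of \emph{four} cliques each of size at most $n(\Delta-1)$; then $\min_{x>0}(1+x)^{4}/x=256/27$ at $x=1/3$ (i.e.\ $\mu=\frac{1}{3n(\Delta-1)}$) yields exactly the threshold $\Delta\le\frac{27}{256}n$. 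So the constant $27/256$ comes from balancing the exponent $4$ against the quadratic $A_f$, not from the Bissacot-style restriction alone.

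A secondary problem is the clique claim itself. An edge of $G$ requires causality in \emph{both} directions, but your action for a flaw only ever modifies the row of its designated first cell (and introduces one cell in that row and one in that cell's column). If $g_1,g_2\in\Gamma(f)$ both meet row $i_1$ but $g_1$ meets it only through its non-designated cell, then addressing $g_1$ perturbs a different row and column, and in general no transition addressing $g_1$ introduces $g_2$; hence $g_1\to g_2$ need not exist and the "row part" of $\Gamma(f)$ is not a clique in $G$. This makes the bound on $\sum_{S\in\Indep(\Gamma(f))}\mu^{|S|}$ unjustified even before the quantitative collapse above. Your atomicity argument and the bound $T_0=O(n\log n)$ are fine, but the core of the proof --- designing actions whose amenability is large enough relative to the clique structure of the causality neighborhood --- is missing.
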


\begin{proof}

Let $M$ be any matrix in which each color appears at most $\Delta$ times and let $\Omega = S_n$ be  the set of all permutations of $[n]$. Let $P = P(M)$ be the set of all quadruples ($i,j,i',j')$ such that $M(i,j) = M(i',j')$. For each quadruple $(i,j,i',j')\in P$ let
\[
f_{i,j,i',j'}= \{ \pi \in \Omega: \pi(i)= j \text{ and } \pi(i') = j'  \} \enspace .
\]
Thus, an element of $\Omega$ is flawless iff it is a Latin Transversal of $M$.
 
To address the flaw induced by a pair of entries $(i,j)$, $(i',j')$ of $M$ in an element $\pi \in \Omega$, we select two other entries $(\alpha, \beta)$, $(\alpha',\beta')$, also selected by $\pi$, and replace the four entries $(i,j),(i',j'),(\alpha, \beta),(\alpha',\beta')$ with the four entries $(i,\beta)$, $(i',\beta')$, $(\alpha,j)$, and $(\alpha',j')$. More precisely, for $\pi \in f = f_{i,j,i',j'}$ the set $A(f,\pi)$ consists of all possible outputs  of {\sc{Switch}}$(\pi,i,j,i',j')$.

\begin{algorithm}[H] 
\caption{ {\sc{Switch}}$(\pi,i,j,i',j')$}
\begin{algorithmic}[1]
\State Let $\alpha$ be any element of $[n]$. Let $\beta = \pi(\alpha)$. \label{v1partlatin}
\State Let $\alpha'\neq \alpha$ be any element of $[n]$. Let $\beta'=\pi(\alpha')$. \label{v2partlatin}

\State Modify $\pi$ to $\rho$ by the following ``switch": $\rho(i) = \beta$, $\rho(i') = \beta'$, $\rho(\alpha) = j$, $\rho(\alpha') = j'$.

\end{algorithmic}
\end{algorithm}

To prove atomicity consider any action $\pi \xrightarrow{f_{i,j,i',j'} } \rho$. Suppose that $\rho(i) = \beta$, $\rho(i') = \beta'$, $\rho^{-1}(j) = \alpha$, and $\rho^{-1}(j') = \alpha'$. Given $\rho$ and $(i,j,i',j')$, we see that the image of every element under $\pi$ other than $i,i',\alpha, \alpha'$ is the same as under $\rho$, while $\pi(i) = j$, $\pi(i') = j'$, $\pi(\alpha) = \beta$ and $\pi(\alpha') = \beta'$.

Enumerating the choices in Steps~\ref{v1partlatin} and~\ref{v2partlatin} we see that $|A(f, \pi) | = n (n-1)$. Let us now consider the form of the causality graph $C$. It is not hard to see that if $f \rightarrow g$ exists in $C$, then $g \rightarrow f$ exists as well, so we will think of the undirected version $G$ of $C$. Two  flaws $f_{i,j,i',j'}$ and $f_{p,q,p',q'}$ are adjacent in $G$ if and only if $\{i,i' \} \cap \{p, p' \} \ne \emptyset$ or $\{j,j'\} \cap \{q,q'\} \ne \emptyset$. Thus, each flaw $f_{i,j,i',j'}$ is adjacent to four types of flaws, corresponding to the four new entries $(i,\beta)$, $(i',\beta')$, $(\alpha,i)$, and $(\alpha',j')$. The maximum degree of $G$ is at most $4n(\Delta-1)$ since for a fixed $(i, j, i', j')$ we can choose $(s, t)$ with $s \in \{i, i'\}$ or $t \in \{j, j'\}$ in $4n$ different ways and, as there are at most $\Delta$ entries of $M$ with any given color, once $(s, t)$ has been chosen there are at most $\Delta-1$ choices for $(s', t')$ such that $M(s,t) = M(s',t')$. Thus, the set of vertices in $\Gamma(f_{i, j, i', j'})$ is the union of four subsets, each of cardinality at most $n(\Delta-1)$, where crucially the vertices in each subset form a clique. 
 
Setting $\mu_f =  \mu$ for each flaw $f$,  the condition~\eqref{eq:mc3} of Theorem~\ref{olala} becomes
\begin{eqnarray}
 n(n-1)  & \ge & 
 \mu^{-1} \sum_{i=0}^{4}\binom{4}{i} \left(n \left(\Delta-1 \right)\right)^{i}  \mu^i = 
 \mu^{-1} \left(1+\mu n \left(\Delta-1 \right)\right)^{4}
 \enspace .\label{eq_late}
\end{eqnarray}
It is easy to see that if $\mu = \frac{1}{3n(\Delta-1)}$, then~\eqref{eq_late} holds for all $ \Delta \le \frac{27}{256}n$.

To bound the running time notice that for every state $\sigma_1$ the largest independent subset of $U(\sigma_1)$ is of size $O(n)$ and that  $ \log_2 |\Omega | =  \log_2 n! = \Theta( n \log n)$.

\end{proof}

%

\bibliographystyle{plain}
\bibliography{../smoser}

\end{document}